\newtheorem{thm}{Theorem}[section]
\newtheorem{prop}[thm]{Proposition}
\newtheorem{cor}[thm]{Corollary}
\newtheorem{lemma}[thm]{Lemma}
\theoremstyle{definition}
\newtheorem{rem}[thm]{Remark}
\numberwithin{equation}{section}
\newcommand{\m}{\mathsf m}
\def\cal#1{\text{$\mathcal{#1}$}}
\def\scr#1{\text{$\mathscr{#1}$}}
\def\ord#1^#2{#1$^{\text{#2}}$}
\def\lie#1{\mathfrak{#1}}
\def\ol#1{\overline{#1}}
\def\hlie#1{\hat{\mathfrak{#1}}}
\def\uq#1{\text{$U_q(\lie #1)$}}
\def\uqr#1^#2{\text{$U_q^{#2}(\lie #1)$}}
\def\uqh#1{\text{$U_q(\hlie #1)$}}
\def\uqhr#1^#2{\text{$U_q^{#2}(\hlie #1)$}}
\def\us#1^#2{\text{$U_{\xi}^{#2}(\lie #1)$}}
\def\ush#1^#2{\text{$U_{\xi}^{#2}(\hlie #1)$}}
\def\dus#1^#2{\text{$\dot{U}_{\xi}^{#2}(\lie #1)$}}
\def\dush#1^#2{\text{$\dot{U}_{\xi}^{#2}(\hlie #1)$}}
\def\gbr#1{{\mbox{\boldmath ${\rm #1}$}}}
\def\wt{{\rm wt}}
\def\Rep{{\rm Rep}}
\def\res{{\rm res}}
\def\opl_#1^#2{\text{\scriptsize$\bigoplus\limits_{\text{\normalsize$#1$}}^{\text{\normalsize$#2$}}$}}
\def\otm_#1^#2{\text{\scriptsize$\bigotimes\limits_{\text{\footnotesize$#1$}}^{\text{\footnotesize$#2$}}$}}
\def\tqbinom#1#2{\text{$\left[\begin{smallmatrix} #1\\#2\end{smallmatrix}\right]$}}
\def\bs#1{\boldsymbol{#1}}
\newcommand{\nops}{\overline{\mathscr P}_{(i_t,k_t)_{1\leq t\leq T}}}
\newcommand{\ps}{{(p_1,\dots,p_T)}}
\newcommand{\bt}{\mathsf b}
\def\gbr#1{\boldsymbol{{\rm #1}}}
\def\endit{\hfill$\diamond$}
\newcommand{\g}{\mathfrak{g}}
\newcommand{\h}{\mathfrak{h}}
\newcommand{\C}{\mathbb{C}}
\newcommand{\Z}{\mathbb{Z}}
\newcommand{\R}{\mathbb{R}}
\newcommand{\N}{\mathbb{N}}
\newcommand{\blue}{\color{\blue}}
\def\scr#1{\mathscr{#1}}
\renewcommand{\thefootnote}
\newcommand{\sd}{(\lambda/\mu)}
\newcommand{\Alp}{\mathsf A}
\DeclareMathOperator{\Tab}{Tab}
\renewcommand{\L}{L}
\begin{document}

\title[Representations of quantum affine algebras of type $B_N$]{Representations of quantum affine algebras of type $B_N$}
\author{Matheus Brito and Evgeny Mukhin}

\address{MB: Departamento de Matemática, Universidade Estadual de Campinas, Campinas - SP - Brazil, 13083-859.}
\email{mbrito@ime.unicamp.br}

\address{EM:  402 N. Blackford St, LD 270, IUPUI, Indianapolis, IN 46202, USA.}
\email{mukhin@math.iupui.edu}

\begin{abstract} We study finite-dimensional representations of quantum affine algebras of type $B_N$. We show that a module is tame if and only if it is thin. In other words, the Cartan currents are diagonalizable if and only if all joint generalized eigenspaces 
have dimension one. We classify all such modules and describe their $q$-characters. In some cases, the $q$-characters are described by super standard Young tableaux of type $(2N|1)$.
\end{abstract}

\maketitle

\setcounter{section}{0}

\section{Introduction} It is well-known that the study of finite-dimensional modules over affine quantum groups is a rich and difficult subject. Many results are available but even the structure of irreducible representations is largely out of reach. However, there are families of representations which are better understood. For example, in type $A$, one has evaluation representations. Their analogs in other types, called minimal affinizations, received a lot of attention, see \cite{C95, CP95, CP96a, CP96b, H07, LM13, MTZ04, M10, N13, N14}. The study becomes much easier when the module is "thin", meaning that the Cartan elements act diagonally with simple joint spectrum. For example, this occurs for all minimal affinizations in types $A$ and $B$. In such cases, one often can describe the combinatorics of the module explicitly.

The motivation for this paper comes from the works \cite{NT98}, \cite{MY12a} and \cite{KOS95}. It is shown in \cite{NT98} in type $A$, that if the Cartan generators are diagonalizable in an irreducible module (we call this property "tame"), then their joint spectrum is necessarily simple (that is thin). Moreover, all such modules are pull-backs with respect to the evaluation homomorphism from a natural class of $U_q(\hlie{sl}_N)$-modules and their $q$-characters are described by the semistandard Young tableaux corresponding to fixed skew Young diagrams.
In this paper we extend these results to type $B$. We are assisted by \cite{MY12a}, where the $q$-characters of a large family of thin $B_N$ modules are described combinatorially in terms of certain paths and by \cite{KOS95}, where some of the $q$-characters are given in terms of certain Young tableaux.

We define explicitly a family of sets of Drinfeld polynomials which we call "extended snakes", (see Section \ref{s:extsnake}) and consider the corresponding irreducible finite-dimensional modules of quantum affine algebra of type $B_N$. This family contains all snake modules of \cite{MY12a}, in particular, it contains all minimal affinizations. We extend the methods of \cite{MY12a} and describe the $q$-characters of the extended snake modules via explicit combinatorics of paths, see Theorem \ref{mainthm1}. It is done by the use of the recursive algorithm of \cite{FM01}, since the extended snake modules are thin and special (meaning that there is only one dominant affine weight).

Then we show that a simple tame module of $B_N$ type has to be an extended snake module (more precisely, a tensor product of snake modules, see Section \ref{chi sec}), see Theorem \ref{mainthm2}. It is done by the reduction to the results of \cite{NT98} and by the induction on $N$. It turns out that it is sufficient to control only a small part of the $q$-character near the highest weight monomial.

Therefore, we obtain the main result of the paper: an irreducible module in type $B$ is tame if and only if it is thin. 
All such modules are special and antispecial. Moreover, thin modules are (tensor products of) extended snake modules and their $q$-characters are described explicitly.

Finally, we study the combinatorics of tame $B_N$ modules in terms of Young tableaux. We observe a curious coincidence with the representation theory of superalgebra $\mathfrak{gl}(2N|1)$. The irreducible representations of the latter algebra are parametrized by Young diagrams which do not contain the box with coordinates $(2N+1,2)$. More generically, one can construct representations of $\mathfrak{gl}(2N|1)$ corresponding to skew Young diagrams which do not contain a rectangle with vertical side of length $2N+1$ and horizontal side of length $2$. The character of such representations is given by super semistandard Young tableaux, see \cite{BR83}. We find that each such skew Young diagram there also corresponds to an irreducible snake module of the affine quantum algebra of type $B_N$. Moreover, the $q$-character of this module is described by the same super standard Young tableaux, see Theorem \ref{t:tabxpaths}. Note that not all snake modules appear that way and there are cases when two different skew Young diagrams correspond to the same snake module, see Section \ref{non generic sec}. We have no conceptual explanation for this coincidence.

\medskip

We expect that a similar analysis by the same methods can be done in other types 
and that the properties of being thin and tame are equivalent in general. 
In particular, one has Young tableaux description of certain modules in types $C$ and $D$, see \cite{NN06, NN07a, NN07b}. Note, however, in other types, minimal affinizations are neither thin nor special in general, see \cite{H07}, \cite{LM13}. 

\medskip

The paper is constructed as follows. In Section \ref{s:basics}  we introduce quantum affine algebras and the relevant notation. In Section \ref{s:thin tame} we describe the problem, recall the answer from \cite{NT98} in type $A$ and recover some of the proofs to illustrate our methods.
In Section \ref{s:extsnake}, we define the extended snake modules and follow the techniques of \cite{MY12a} to compute their $q$-characters. In Section \ref{s:tame}, we consider tame modules and show that they are the extended snake modules. In Section \ref{s:tableaux} we study the bijection between paths and super standard skew Young tableaux.

\medskip

{\bf {Acknowledgements.}}  MB is grateful to the Department of Mathematical Sciences, IUPUI, for their hospitality during two visits when the majority of this research was carried out. MB was supported by FAPESP, grants 2010/19458-9 and 2012/04656-5.

\section{Background}\label{s:basics}
We recall basic facts about finite-dimensional representations of quantum affine algebras and set up our notation.
\subsection{Cartan data.} Let $\g$ be a complex simple Lie algebra of rank $N$ and $\h$ a Cartan subalgebra of $\g$. We identify $\h$ and $\h^*$ by means of the invariant inner product $\left\langle\cdot, \cdot \right\rangle$ on $\g$ normalized such that the square length of the maximal root equals $2$. Let $I = \{1, \ldots, N\}$ and let $\{\alpha_i\}_{i\in I}$ be a set of simple roots, $\{\alpha_i^{\vee}\}_{i\in I}$ and $\{\omega_i\}_{i\in I}$, the sets of, respectively, simple coroots and fundamental weights. Let $C = (c_{ij})_{i,j\in I}$ denote the Cartan matrix. We have
$$2\left\langle \alpha_i, \alpha_j\right\rangle = c_{ij} \left\langle \alpha_i, \alpha_i \right\rangle,\qquad 2\left\langle \alpha_i, \omega_j\right\rangle = \delta_{ij} \left\langle \alpha_i, \alpha_i \right\rangle.$$
Let $r^{\vee}$ be the maximal number of edges connecting two vertices of the Dynkin diagram of $\g$. Let $r_i = \frac{1}{2}r^{\vee}\left\langle \alpha_i, \alpha_i \right\rangle$. We set 
$$D := {\rm diag}(r_1, \ldots, r_N),$$
so that $DC$ is symmetric.

Let $Q$ (resp. $Q^+$) and $P$ (resp. $P^+$) denote the $\Z$-span (resp. $\Z_{\geq 0}$-span) of the simple roots and fundamental weights respectively. Let $\leq$ be the partial order on $P$ in which $\lambda \leq \lambda'$ if and only if $\lambda'-\lambda \in Q^+$.

Let $\hlie g$ be the untwisted affine algebra corresponding to $\g$.

For $p\in \C$, $r,m\in\mathbb Z_{\ge 0}$, $m\ge r$, define
\begin{equation*}
[m]_p=\frac{p^m -p^{-m}}{p -p^{-1}},\ \ \ \ [m]_p! =[m]_p[m-1]_p\ldots
[2]_p[1]_p,\ \ \ \ \tqbinom{m}{r}_p = \frac{[m]_p!}{[r]_p![m-r]_p!}.
\end{equation*}

\subsection{Quantum Affine Algebras} Fix $q\in \C^*$, not a root of unity. Let $q_i := q^{r_i}$ for $i\in I$. The \emph{quantum affine algebra} $\uqh g$ in Drinfeld's new realization, \cite{Dri88}, is generated by $x_{i,r}^{\pm}$, $k_i^{\pm}$, $h_{i,s}$, for $i\in I$, $r\in \Z$ and $s\in \Z\setminus\{0\}$, and central elements $c^{\pm 1/2}$, subject to the following relations:
\begin{align*}
k_ik_i^{-1} = k_i^{-1}k_i =1, \ \  k_ik_j &=k_jk_i,\ \ k_ih_{j,r}= h_{j,r}k_i,\\
k_ix_{j,r}^\pm k_i^{-1} = q_i^{{}\pm c_{ij}}x_{j,r}^{{}\pm{}},\ \ &[h_{i,r} , x_{j,s}^{{}\pm{}}] = \pm\frac1r[rc_{ij}]_{q_i}x_{j,r+s}^{{}\pm{}},\\
x_{i,r+1}^{{}\pm{}}x_{j,s}^{{}\pm{}} -q_i^{{}\pm
c_{ij}}x_{j,s}^{{}\pm{}}x_{i,r+1}^{{}\pm{}} &
=q_i^{{}\pm     c_{ij}}x_{i,r}^{{}\pm{}}x_{j,s+1}^{{}\pm{}} -x_{j,s+1}^{{}\pm{}}x_{i,r}^{{}\pm{}},\\
[h_{i,n}, h_{j,m}] = \delta_{n,-m} \frac{1}{n}[nc_{ij}]_{q_i}\dfrac{c^n - c^{-n}}{q - q^{-1}}, \ \ & 
[x_{i,r}^+ , x_{j,s}^-]= \delta_{i,j} \frac{c^{(r-s)/2}\phi_{i,r+s}^+ - c^{-(r-s)/2}\phi_{i,r+s}^-}{q_i - q_i^{-1}},\\
\sum_{\sigma\in S_m}\sum_{k=0}^m(-1)^k \tqbinom{m}{k}_{q_i}x_{i,
n_{\sigma(1)}}^{{}\pm{}}\ldots x_{i,n_{\sigma(k)}}^{{}\pm{}} &
x_{j,s}^{{}\pm{}} x_{i, n_{\sigma(k+1)}}^{{}\pm{}}\ldots
x_{i,n_{\sigma(m)}}^{{}\pm{}} =0,\ \ \text{if $i\ne j$},
\end{align*}
for all sequences of integers $n_1,\ldots, n_m$, where $m
=1-c_{ij}$, $S_m$ is the symmetric group on $m$ letters, and the
$\phi_{i,r}^{{}\pm{}}$ are determined by 
\begin{equation}
\phi_i^\pm(u) = \sum_{r=0}^{\infty}\phi_{i,\pm r}^{\pm}u^{\pm r} =
k_i^{\pm 1} \exp\left(\pm(q-q^{-1})\sum_{s=1}^{\infty}h_{i,\pm s}
u^{\pm s}\right).
\label{phi def}
\end{equation}

Consider the subalgebras $\uqh {n^{\pm}}$ and $\uqh h$ of $\uqh g$ generated, respectively, by $(x_{i,r}^{\pm})_{i\in I, r\in \Z}$ and $(h_{i,s})_{i\in I, s\in \Z\setminus\{0\}}, (k_i)_{i\in I}$ and $c^{\pm 1/2}$. We have the isomorphism of vector spaces
$$\uqh g \cong \uqh {n^-}\otimes \uqh h \otimes \uqh {n^+}.$$

There exist a coproduct, a counit, and an antipode making $\uqh g$ a Hopf algebra. An explicit formula for
the comultiplication of the current generators of $U_q(\hlie g)$ is not known. However, we have the following useful lemma.

\begin{lemma}[\cite{Da98}]\label{e:coprod}
Modulo $U_q(\hlie g)X^-\otimes U_q(\hlie g)X^+$ we have
$$\Delta(\phi_i^{\pm}(u)) = \phi_i^{\pm}(u)\otimes \phi_i^{\pm}(u), $$
where $X^{\pm}$ is the $\C$-span of the elements $x_{j,r}^{\pm}$, $j\in I, r\in \Z$. \qed
\end{lemma}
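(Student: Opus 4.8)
The plan is to deduce this from Beck's description of the Drinfeld isomorphism through Lusztig's braid-group automorphisms, together with the triangularity of the Drinfeld--Jimbo coproduct with respect to a convex PBW basis; the conceptual reason behind the statement is the triangular (Khoroshkin--Tolstoy) factorization of the universal $R$-matrix, whose Cartan factor is governed by the $\phi_i^{\pm}(u)$. Write $J = U_q(\hlie g)X^-\otimes U_q(\hlie g)X^+$. Comparing coefficients of $u^r$, the $+$-part of the lemma is equivalent to the family of congruences
\[
\Delta(\phi_{i,r}^+)\ \equiv\ \sum_{l=0}^{r}\phi_{i,l}^+\otimes\phi_{i,r-l}^+ \pmod J,\qquad r\ge 0,
\]
which I would establish by induction on $r$; the base case $r=0$ is just $\Delta(\phi_{i,0}^+)=\Delta(k_i)=k_i\otimes k_i$. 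The $-$-part follows symmetrically, for instance by transporting the $+$-statement along the antiautomorphism of $U_q(\hlie g)$ that exchanges $x_{i,r}^{\pm}\leftrightarrow x_{i,-r}^{\mp}$ and reverses the coproduct.

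For the inductive step I would use the Drinfeld relation for $[x_{i,r}^+,x_{j,s}^-]$ with $j=i$ and $s=0$: since $\phi_{i,r}^-=0$ for $r\ge1$, it gives $\phi_{i,r}^+=c^{-r/2}(q_i-q_i^{-1})\,[x_{i,r}^+,x_{i,0}^-]$ (on finite-dimensional modules $c$ acts as $1$; in general $c^{1/2}$ is a central grouplike and causes no trouble). Applying the algebra homomorphism $\Delta$, and using that $\Delta(x_{i,0}^-)=\Delta(F_i)=x_{i,0}^-\otimes k_i^{-1}+1\otimes x_{i,0}^-$ is explicitly known, the problem reduces to computing $\Delta(x_{i,r}^+)$ modulo $J$.

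This is the core step. By Beck's theorem, $x_{i,r}^+$ is obtained from one of the affine Chevalley generators by applying a product of Lusztig operators $T_j^{\pm1}$ realizing a coweight translation in the extended affine Weyl group; a choice of reduced word yields a convex order on the positive roots of $\hlie g$ and an associated PBW basis, with respect to which the Drinfeld--Jimbo coproduct is upper triangular in the sense of Levendorskii--Soibelman, Beck--Kac and Damiani: for a positive real root $\beta$, the difference $\Delta(x_\beta^+)-x_\beta^+\otimes 1-K_\beta\otimes x_\beta^+$ is a sum of terms $a\otimes b$ with $a$ a product of positive root vectors for roots strictly preceding $\beta$ and $b$ a product for roots strictly following it. Re-expressing this in Drinfeld generators, grading everything by the classical weight and by the degree in $\delta$, and using that on the positive affine Borel $U_q(\hlie b^+)$ the coproduct takes values in $U_q(\hlie b^+)\otimes U_q(\hlie b^+)$, one finds that modulo $J$ only ``diagonal'' contributions of the shape (Cartan current)$\,\otimes\,x_{i,\bullet}^+$ survive alongside $x_{i,r}^+\otimes 1$, every ``opposite-mixed'' term living in $U_q(\hlie g)X^+\otimes U_q(\hlie g)X^-$ being excluded by positivity of the bigrading. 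Substituting the resulting expression for $\Delta(x_{i,r}^+)$ into the bracket with $\Delta(x_{i,0}^-)$, invoking the inductive hypothesis for $\Delta(\phi_{i,l}^+)$ with $l<r$, and using the relations governing $[\phi_{i,l}^+,x_{i,0}^-]$ and $[x_{i,m}^+,x_{i,0}^-]$ to collect terms, one arrives at $\Delta(\phi_{i,r}^+)\equiv\sum_{l=0}^r\phi_{i,l}^+\otimes\phi_{i,r-l}^+\pmod J$, closing the induction.

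I expect the main obstacle to be precisely this third step: converting the convex-order triangularity --- which is naturally stated in terms of affine PBW root vectors, and for the imaginary roots those vectors are only approximately the $\phi_{i,s}^+$ --- into a clean identity for $\Delta(x_{i,r}^+)$ in Drinfeld generators, and then checking that every term not of the displayed diagonal form really lies in $J$ rather than merely in the larger span of all mixed terms. The asymmetry of the statement (lowering currents on the left tensor factor, raising currents on the right) is essential and has to be extracted from the one-sidedness of $\Delta$ on the two opposite affine Borels, aided by careful weight-and-$\delta$-degree bookkeeping; working throughout inside $U_q(\hlie b^{\pm})$, or filtering $U_q(\hlie g)$ by the number of $x^-$-factors, is the organizing device that keeps this manageable.
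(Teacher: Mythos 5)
The paper offers no proof of this lemma: it is stated as a quoted result from Damiani \cite{Da98}, so your sketch is being measured against the cited source rather than an in-paper argument.

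Your overall plan --- compare coefficients, reduce to $\Delta(\phi_{i,r}^+)\equiv\sum_{l}\phi_{i,l}^+\otimes\phi_{i,r-l}^+$, prove it by induction on $r$ via $\phi_{i,r}^+ = c^{-r/2}(q_i-q_i^{-1})[x_{i,r}^+,x_{i,0}^-]$, and then appeal to coproduct triangularity in Beck's convex PBW basis --- is the right framework and is consistent with what Damiani does. However, there is a genuine error in the reduction. You claim that ``the problem reduces to computing $\Delta(x_{i,r}^+)$ modulo $J$.'' This is not enough, because $J=U_q(\hlie g)X^-\otimes U_q(\hlie g)X^+$ is not preserved under bracketing with $\Delta(x_{i,0}^-)=x_{i,0}^-\otimes k_i^{-1}+1\otimes x_{i,0}^-$. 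Concretely, if $j\in J$ has a summand $u\otimes v\,x_{i,s}^+$ with $u\in U_q(\hlie g)X^-$, then
$$[\,u\otimes v\,x_{i,s}^+,\ 1\otimes x_{i,0}^-\,]$$
contributes $u\otimes v\,[x_{i,s}^+,x_{i,0}^-]$, and $[x_{i,s}^+,x_{i,0}^-]$ is proportional to $\phi_{i,s}^+$, a Cartan current; this summand therefore lands in $U_q(\hlie g)X^-\otimes U_q(\hlie h)$, outside $J$. So knowing $\Delta(x_{i,r}^+)$ only modulo $J$ leaves open the possibility that the $J$-error contaminates the claimed congruence for $\Delta(\phi_{i,r}^+)$. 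What one actually needs --- and what the reference proves --- is the sharper statement
$$\Delta(x_{i,r}^+)\equiv x_{i,r}^+\otimes 1+\sum_{s\geq 0}\phi_{i,s}^+\otimes x_{i,r-s}^+\quad\text{mod } U_q(\hlie g)X^-\otimes U_q(\hlie g)(X^+)^2.$$
With at least two $x^+$-factors in the error's second tensor slot, bracketing with $1\otimes x_{i,0}^-$ still leaves at least one $x^+$, and only then do the error terms remain in $J$. Your reduction step must be upgraded to this finer congruence before the induction closes.

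Two further cautions. First, the core computation of $\Delta(x_{i,r}^+)$ to that precision is not carried out in your proposal (you acknowledge this), and the translation from Levendorskii--Soibelman triangularity to Drinfeld generators is subtler than it looks: positive affine real roots of the form $-\alpha+n\delta$ with $\alpha>0$, $n>0$, correspond to Drinfeld generators $x^-_{j,n}$, so ``products of positive affine root vectors'' genuinely mix the two Drinfeld halves, and the asymmetry you want only emerges after careful $\delta$-degree bookkeeping --- this is precisely the hard part of \cite{Da98}, not a routine corollary. Second, the anti-automorphism $\omega$ you invoke for the $-$-part sends $J$ to $U_q(\hlie g)X^+\otimes U_q(\hlie g)X^-$; one must compose with the tensor flip coming from $\Delta\mapsto\Delta^{\mathrm{op}}$ to land back in $J$. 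The argument works, but as stated it conflates the two opposite ideals and should be made explicit.
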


The subalgebra of $\uqh g$ generated by $(k_i)_{i\in I}$, $(x_{i,0}^{\pm})_{i\in I}$ is a Hopf sublagebra of $\uqh g$ and is isomorphic as a Hopf algebra to $\uq g$, the quantized enveloping algebra of $\g$.

\subsection{Finite-dimensional representations and $q$-characters}
 A representation $V$ of $\uqh g$ is said to be of type $1$ if $c^{\pm 1/2}$ acts as the identity on $V$ and 
$$V = \bigoplus_{\lambda\in P} V_{\lambda}, \qquad V_{\lambda} = \{v\in V | k_iv = q^{\left\langle \alpha_i , \lambda \right\rangle}v\}.$$ 
Throughout this paper all representations will be assumed to be finite-dimensional representations of type $1$. 

The above decomposition of a module $V$ into its $\uq g$-weight spaces can be refined by decomposing it into Jordan subspaces of mutually commuting $\phi_{i,\pm r}^{\pm}$ defined in (\ref{phi def}), \cite{Kn95}, \cite{FR98}:
\begin{equation}
V = \bigoplus_{\bs\gamma}V_{\bs\gamma}, \qquad \bs\gamma = (\gamma_{i,\pm r}^{\pm})_{i\in I,\ r\in\Z_{\geq 0}}, \quad \gamma_{i,\pm r}^{\pm} \in \C,
\label{lw space decomposition}
\end{equation}
where
$$V_{\bs\gamma} = \left\{v\in V |\, \exists k\in \N, \forall i\in I,\ m\geq 0 \ \left(\phi_{i,\pm m}^{\pm} - \gamma_{i,\pm m}^{\pm}\right)^k v = 0\right\}.$$
If $\dim(V_{\bs\gamma})> 0$, $\bs\gamma$ is called an \emph{$\ell$-weight} of $V$. For every finite-dimensional representation of $\uqh g$, the $\ell$-weights are known \cite{FR98} to be of the form
$$\gamma_i^{\pm}: = \sum_{r=0}^{\infty}\gamma_{i,\pm r}^{\pm}u^{\pm r} = q_i^{\deg Q_i - \deg R_i}\dfrac{Q_i(uq_i^{-1})R_i(uq_i)}{Q_i(uq_i)R_i(uq_i^{-1})},$$
where the right hand side for $\gamma_i^+$ (resp. $\gamma_i^-$) is treated as a formal series in positive (resp. negative) integer powers of $u$, and $Q_i$ and $R_i$ are polynomials of the form
$$Q_i(u) = \prod_{a\in \C^*}(1-ua)^{w_{i,a}}, \qquad R_i(u) = \prod_{a\in \C^*}(1-ua)^{x_{i,a}},$$
for some $w_{i,a}, x_{i,a}\geq 0$, $i\in I$. Let $\cal P$ denote the free abelian multiplicative group of monomials in infinitely many formal variables $(Y_{i,a})_{i\in I, \ a\in \C^*}$. The group $\cal P$ is in bijection with the set of $\ell$-weights $\bs\gamma$ of the form above according to
\begin{equation}
\bs\gamma(m) \ \ {\rm with}\ \ m=\prod_{i\in I, a\in \C^*}Y_{i,a}^{w_{i,a}-x_{i,a}}.
\end{equation}
We identify elements of $\cal P$ with $\ell$-weights of finite-dimensional representations in this way, and henceforth write $V_m$ for $V_{\bs\gamma(m)}$. Let $\Z\cal P = \Z\left[Y_{i,a}^{\pm 1}\right]_{i\in I, a\in \C^*}$ be the ring of Laurent polynomials in the variables $Y_{i,a}$ with integer coefficients. The $q$-character map $\chi_q$, \cite{FR98}, is the injective homomorphism of rings
$$\chi_q: \Rep \uqh g \rightarrow \Z\left[ Y_{i,a}^{\pm 1}\right]_{i\in I, a\in \C^*},$$
defined by
$$\chi_q(V) = \sum_{m\in \cal P}\dim (V_m)m,$$
where $\Rep \uqh g$ is the Grothendieck ring of finite-dimensional representations of $\uqh g$. 

For any finite-dimensional representation $V$ of $\uqh g$, define
$$\cal M(V):= \{m\in \cal P | \dim(V_m)>0\}\subset \cal P.$$

For each $j\in I$, a monomial $m = \prod_{i\in I, a\in \C^*}Y_{i,a}^{u_{i,a}}$ is said to be \emph{$j$-dominant} (resp. \emph{$j$-anti-dominant}) if $u_{j,a}\geq 0$ (resp. $u_{j,a}\leq 0$) for all $a\in \C^*$. A monomial is \emph{(anti-)dominant} if it is $i$-(anti-)dominant for all $i\in I$. Let $\cal P^+\subseteq \cal P$ denote the submonoid of all dominant monomials.

Given a $\uqh g$-module $V$, a vector $v\in V$ is called a \emph{highest $\ell$-weight vector}, with \emph{highest $\ell$-weight} $\bs\gamma$, if $$\phi_{i,\pm s}^{\pm} v = \gamma_{i,\pm s}^{\pm} v \quad {\rm and}\quad x_{i,r}^{+}v = 0, \quad {\rm for\ all}\ i\in I, r\in \Z, s\in \Z_{\geq 0}.$$
A representation $V$ of $\uqh g$ is said to be a \emph{highest $\ell$-weight representation} if $V = \uqh g v$ for some highest $\ell$-weight vector $v\in V$.
Similarly, we define \emph{lowest $\ell$-weight vectors} and \emph{lowest $\ell$-weight representations}. 

For every $m\in \cal P^+$ there is a unique finite-dimensional irreducible representation of $\uqh g$, that is highest $\ell$-weight with highest $\ell$-weight $\bs\gamma(m)$, and moreover every finite-dimensional irreducible $\uqh g$-module is of this form for some $m\in \cal P^+$ \cite{CP94b}. We denote the irreducible module corresponding to $m\in\cal P^+$ by $L(m)$.
 
A finite-dimensional representation of $\uqh g$ $V$ is said to be \emph{special} (resp. \emph{anti-special}) if $\chi_q(V)$ has exactly one dominant (resp. anti-dominant) monomial. It is \emph{tame} if the action of $\uqh h$ on $V$ is semisimple. It is \emph{thin} if $\dim(V_m)\leq 1$ for all monomials $m$. We observe that if $V$ is thin then it is also tame.   

We let $\wt:\cal P \rightarrow P$ be the group homomorphism defined by $\wt(Y_{i,a}) = \omega_i$, $i\in I, a\in \C^*$. Define $A_{i,a}\in \cal P$, $i\in I, a \in \C^*$, by
\begin{equation}\label{e:lroots}
A_{i,a} = Y_{i,aq_i}Y_{i,aq_i^{-1}}\prod_{c_{ji}=-1}Y_{j,a}^{-1}\prod_{c_{ji}=-2}Y_{j,aq}^{-1}Y_{j,aq^{-1}}^{-1}\prod_{c_{ji}=-3}Y_{j,aq^2}^{-1}Y_{j,a}^{-1}Y_{j,aq^{-2}}^{-1}.
\end{equation}
Let $\cal Q$ be the subgroup of $\cal P$ generated by $A_{i,a}$, $i\in I, a \in \C^*$. Let $\cal Q^{\pm}$ be the monoid generated by $A_{i,a}^{\pm 1}$, $i\in I, a \in \C^*$. Note that $\wt(A_{i,a}) = \alpha_i$. There is a partial order $\leq$ on $\cal P$ in which $m\leq m'$ implies $m'm^{-1} \in \cal Q^+$. Moreover, this partial order is compatible with the partial order on $P$ in the sense that $m\leq m'$ implies $\wt (m) \leq \wt (m')$. For $j\in I$, let $\cal Q_j^{\pm}$ be the submonoid of $\cal Q$ generated by $A_{j,a}^{\pm}$, $a\in \C^*$.

For all $m\in \cal P^+$,
\begin{equation}
\cal M(L(m)) \subseteq m\cal Q^-,
\label{mon set}
\end{equation}
see \cite{FR98}, \cite{FM01}.

For all $i\in I, a\in \C^*$, let $u_{i,a}$ be the homomorphism of abelian groups $\cal P \rightarrow \Z$ such that
$$u_{i,a}(Y_{j,b})=\left\{\begin{array}{ll}
1& i=j \ {\rm and}\ a=b,\\
0 & {\rm otherwise}.
\end{array}\right.$$

For each $J\subseteq I$ we denote by $U_{q}(\hlie {g}_J)$ the subalgebra of $\uqh g$ generated by $c^{\pm 1/2}$, $(x_{j,r}^{\pm})_{j\in J, r\in \Z}$, $(\phi_{j,\pm r}^{\pm})_{j\in J, r\in \Z_{\geq 0}}$. Let $\cal P_J$ be the subgroup of $\cal P$ generated by $(Y_{j,a})_{j\in J, a\in \C^*}$ and $\cal P^+_J\subseteq \cal P_J$ the set of $J$-dominant monomials. We use the notation $\cal P_j$ and $\cal P_j^+$ when $J=\{j\}$ is the set of a single vertex $j\in I$. Let 
$${\rm res_J}: \Rep \uqh g \rightarrow \Rep U_q(\hlie g_J),$$
be the restriction map and $\beta_J$ be the homomorphism $\cal P \rightarrow \cal P_J$ sending $Y_{i,a}^{\pm}$ to itself for $i\in J$ and to $1$ for $i\notin J$. It is well known \cite{FR98} that the following diagram is commutative
$$\xymatrix{
\Rep \uqh g \ar[r]^-{\chi_q}\ar[d]^{\res_J} & \Z[Y_{i,a}^{\pm 1}]_{i\in I, a\in\C^*}\ar[d]^-{\beta_J}\\
\Rep U_q(\hlie g_J)\ar[r]^-{\chi_q} & \Z[Y_{j,b}^{\pm 1}]_{j\in J, b\in\C^*}.}
$$
As shown in \cite{FM01}, for each $J\subseteq I$ there exists a ring homomorphism 
$$\tau_J : \Z[Y_{i,a}^{\pm 1}]_{i\in I, a\in \C^*} \rightarrow \Z[Y_{j,b}^{\pm 1}]_{j\in J, b\in \C^*}\otimes \Z[Z_{k,c}^{\pm}]_{k\in I \setminus J, c \in \C^*},$$
where $(Z_{k,c}^{\pm})_{k\in I\setminus J, c\in \C^*}$ are certain new formal variables, with the following properties:
\begin{enumerate}
\item $\tau_J$ is injective.
\item $\tau_J$ refines $\beta_J$ in the sense that $\beta_J$ is the composition of $\tau_J$ with the homomorphism $\cal P_J\otimes \Z[Z_{k,c}^{\pm 1}]_{k\in I\setminus J, c\in \C^*}\rightarrow \cal P_J$ which sends $Z_{k,c}\mapsto 1$ for all $k\notin J$, $c\in \C^*$. Moreover, the restriction of $\tau_J$ to the image of $\Rep \uqh g$ in $\Z[Y_{i,a}^{\pm 1}]_{i\in I, a \in \C^*}$ is a refinement of the restriction homomorphism $\res_J$.
\item When $J=\{j\}\subseteq I$ we write $\tau_j$ instead of $\tau_J$ and $\beta_j$ instead of $\beta_J$. In the diagram
\begin{equation*}
\xymatrix{
\Z[Y_{i,a}^{\pm 1}]_{i\in I, a\in \C^*}\ar[r]^-{\tau_j}\ar[d] &\Z[Y_{j,b}^{\pm}]_{j\in J,b\in\C^*}\otimes \Z[Z_{k,c}^{\pm}]_{k\in I\setminus J, c\in \C^*}\ar[d]\\
\Z[Y_{i,a}^{\pm 1}]_{i\in I, a\in \C^*}\ar[r]^-{\tau_j} & \Z[Y_{j,b}^{\pm}]_{j\in J,b\in\C^*}\otimes \Z[Z_{k,c}^{\pm}]_{k\in I\setminus J, c\in \C^*}
}
\label{tauj diagram}
\end{equation*}
the right vertical arrow be multiplication by $\beta_j(A_{j,c}^{-1})\otimes 1$; then the diagram commutes if and only if the left vertical arrow is multiplication by $A_{j,c}^{-1}$.
\end{enumerate}
In particular, the properties of $\tau_j$ imply the following.

\begin{lemma}
Let $V$ be a $\uqh g$-module, $m\in \chi_q(V)$ such that $\beta_j(m)\in \cal P_j^+$, and $v\in V_m$ such that $v$ is a highest $\ell$-weight vector for the action of $U_q(\hlie g_j)$. Then $m M\in \chi_q(V)$ for all $M\in \cal Q^-$ such that $\beta_j(m M)\in \chi_q(L(\beta_j(m)))$.  \qed
\label{sl2 char}
\end{lemma}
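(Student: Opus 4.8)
The plan is to exploit property (iii) of the homomorphism $\tau_j$, which is precisely the mechanism that tracks how multiplication by $A_{j,c}^{-1}$ on the full $q$-character corresponds to multiplication by $\beta_j(A_{j,c}^{-1})$ after restriction to $U_q(\hlie g_j)$. First I would apply $\res_j$ to $V$ and use the commutative diagram relating $\chi_q$, $\res_j$ and $\beta_j$: the vector $v\in V_m$ generates, under $U_q(\hlie g_j)$, a highest $\ell$-weight submodule whose highest $\ell$-weight is $\bs\gamma(\beta_j(m))$, since $\beta_j(m)\in\cal P_j^+$ by hypothesis. Hence the irreducible quotient of this submodule is $L(\beta_j(m))$, and in particular every monomial of $\chi_q(L(\beta_j(m)))$ occurs in $\chi_q(\res_j V) = \beta_j(\chi_q(V))$ — but this only gives occurrence after projecting away the $Z$-variables, so one must work one level up, with $\tau_j$.

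The key step is to lift this to the $\tau_j$-refinement. By property (ii), $\tau_j$ restricted to the image of $\Rep\uqh g$ refines $\res_j$; write $\tau_j(\chi_q(V)) = \sum_k m_k\otimes(\text{monomial in }Z)$, where the $m_k\in\cal P_j$ are exactly the monomials appearing in $\chi_q(\res_j V)$ counted with the appropriate $Z$-decoration. By property (iii), the $Z$-variables are arranged so that multiplying a monomial $m'$ of $\chi_q(V)$ by $A_{j,c}^{-1}$ sends its $\tau_j$-image to the $\tau_j$-image of $m'$ multiplied by $\beta_j(A_{j,c}^{-1})\otimes 1$ — i.e.\ the $Z$-part is unchanged. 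Now take the $\tau_j$-image of $m$ itself; since $v$ is a highest $\ell$-weight vector for $U_q(\hlie g_j)$, the whole $U_q(\hlie g_j)$-submodule generated by $v$ contributes monomials $mM'$ with $M'\in\cal Q_j^-$ and $\beta_j(mM')\in\chi_q(L(\beta_j(m)))$, all carrying the same $Z$-decoration as $m$. Finally, for an arbitrary $M\in\cal Q^-$ with $\beta_j(mM)\in\chi_q(L(\beta_j(m)))$, write $M = M_j M'$ where $M_j\in\cal Q_j^-$ is the $j$-part and $M'$ is generated by $A_{i,a}^{-1}$ with $i\neq j$; then $\beta_j(M')=1$, so $\beta_j(mM) = \beta_j(mM_j)$, and one identifies $mM$ inside $\chi_q(V)$ by matching its $\tau_j$-image: the $\cal P_j$-component is $\beta_j(mM_j)\in\chi_q(L(\beta_j(m)))$, which occurs by the $\mathfrak{sl}_2$-type argument above, and the $Z$-component is forced to be the one attached to the branch through $m$.

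The main obstacle I expect is the bookkeeping in property (iii): one must be careful that the $Z$-decoration attached to a monomial is genuinely determined by the $\cal Q^-$-factor modulo $\cal Q_j^-$, so that the monomial $mM$ is unambiguously located in $\tau_j(\chi_q(V))$ once $\beta_j(mM)$ is known to lie in $\chi_q(L(\beta_j(m)))$. This is really the statement that $\tau_j$ is injective (property (i)) together with the recursive structure of the algorithm of \cite{FM01}, so the argument is forced, but writing it cleanly requires unwinding the definition of $\tau_j$ from that reference rather than just the three bullet-point properties quoted here. Everything else — the identification of the submodule generated by $v$ as highest $\ell$-weight, and the passage to $L(\beta_j(m))$ via the inclusion $\cal M(L(\beta_j(m)))\subseteq\beta_j(m)\cal Q_j^-$ from \eqref{mon set} — is routine.
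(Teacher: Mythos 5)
Your first two paragraphs are sound and capture the right mechanism for $M\in\cal Q_j^-$: the vector $v$ generates a highest $\ell$-weight $U_q(\hlie g_j)$-submodule with highest $\ell$-weight $\beta_j(m)$, hence having $L(\beta_j(m))$ as quotient; property~(iii) of $\tau_j$ shows that each lowering by $A_{j,c}^{-1}$ leaves the $Z$-part of the $\tau_j$-image unchanged; injectivity of $\tau_j$ then locates the corresponding monomials of $\chi_q(V)$ as $mM$ with $M\in\cal Q_j^-$. That is exactly the case needed in the paper.

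The final paragraph, where you pass from $\cal Q_j^-$ to $\cal Q^-$, contains a concrete error. You assert that if $M'$ is generated by $A_{i,a}^{-1}$ with $i\neq j$ then $\beta_j(M')=1$. This is false: by the formula \eqref{e:lroots} for $A_{i,a}$, whenever $i$ is adjacent to $j$ the monomial $A_{i,a}$ carries a factor $Y_{j,\cdot}^{-1}$ (or several), so $\beta_j(A_{i,a}^{-1})$ is a nontrivial positive power of the $Y_j$-variables. Consequently $\beta_j(mM)\neq\beta_j(mM_j)$ in general, and --- precisely because property~(iii) tells you that multiplying by $A_{i,a}^{\pm 1}$ with $i\neq j$ \emph{does} change the $Z$-component of the $\tau_j$-image --- the monomial you recover by matching $\tau_j$-images is $mM_j$, not $mM$. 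So the argument yields $mM_j\in\chi_q(V)$, which is the $\cal Q_j^-$ statement, and gives nothing about $mM$ when $M'\neq 1$. In fact the conclusion is false for general $M\in\cal Q^-$: take $\lie g=\lie{sl}_3$, $V=L(Y_{1,a})$, $m=Y_{1,a}$, $j=1$, $M=A_{1,aq}^{-1}A_{2,aq^2}^{-1}A_{2,a}^{-1}$. Then $\beta_1(M)=1$, so $\beta_1(mM)=Y_{1,a}\in\chi_q(L(\beta_1(m)))$, yet $\wt(mM)=\omega_1-\alpha_1-2\alpha_2$ is not a weight of $L(\omega_1)$, so $mM\notin\chi_q(V)$. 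The lemma is only ever invoked in the paper with $M\in\cal Q_j^-$ (iterating over different $j$ via Lemma~\ref{hws lemma}); read ``$\cal Q^-$'' as ``$\cal Q_j^-$'' in the statement, drop your last paragraph, and the proof goes through.
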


In what follows we often use the following lemma which is obtained from properties of $\beta_J$ described above together with \eqref{mon set} and the algebraic independence of $A_{j,a}$.	
\begin{lemma}
Let $m_+\in \cal P^+$ and $\{i_1,i_2,\ldots,i_N\} = I$. Let $k\in\{1,\dots,N-1\}$ and $M_j\in \cal Q_{i_j}^-$, $j=1,\dots,k $. Let $m = m_+ \prod_{j=1}^k M_j$. Then 
$x_{i_j,r}^{+} \cdot v=0$ for all $v\in L(m_+)_m$, $j=k+1,\dots,N$, and $r\in\Z$.\qed
\label{hws lemma} 
\end{lemma}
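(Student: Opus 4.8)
The plan is to argue by contradiction. The only input beyond formal manipulation is the standard fact that $x_{i,r}^{+}$ raises the $\mathfrak g$-weight by $\alpha_i$ -- more precisely, that $x^{+}_{i,r}$ maps $L(m_+)_m$ into $\bigoplus_{m':\,\wt(m')=\wt(m)+\alpha_i}L(m_+)_{m'}$ -- which is immediate from the relation $k_j x_{i,r}^{\pm}k_j^{-1}=q_j^{\pm c_{ji}}x_{i,r}^{\pm}$ together with the fact that the $\ell$-weight grading refines the $\mathfrak g$-weight grading, under which $\wt(Y_{i,a})=\omega_i$ and $\wt(A_{i,a})=\alpha_i$. (Alternatively one could invoke the sharper standard statement $x^{+}_{i,r}(V_m)\subseteq\bigoplus_{c\in\C^{*}}V_{mA_{i,c}}$, which would pin down $m'$ to be of the form $mA_{i,c}$ and make the contradiction below even more transparent.) Once this is in hand, the whole argument takes place inside the free abelian group $\cal Q$, using only the algebraic independence of the generators $A_{l,a}$ and the inclusion $\cal M(L(m_+))\subseteq m_+\cal Q^{-}$ from \eqref{mon set}.

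So I would fix $j'\in\{k+1,\dots,N\}$, set $i=i_{j'}$, and suppose toward a contradiction that $x_{i,r}^{+}v\neq 0$ for some $v\in L(m_+)_m$ and some $r\in\Z$. Since $0\neq x_{i,r}^{+}v$ lies in $\bigoplus_{\wt(m')=\wt(m)+\alpha_i}L(m_+)_{m'}$, some monomial $m'\in\cal M(L(m_+))$ satisfies $\wt(m')=\wt(m)+\alpha_i$. By \eqref{mon set} I may write $m'=m_+N'$ with $N'\in\cal Q^{-}$, and combined with the hypothesis $m=m_+\prod_{j=1}^{k}M_j$ this gives $m'm^{-1}=N'\big(\prod_{j=1}^{k}M_j\big)^{-1}\in\cal Q$. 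Now, because $i_1,\dots,i_N$ are pairwise distinct and $j'>k$, the index $i=i_{j'}$ is not among $i_1,\dots,i_k$, so the factor $\big(\prod_{j=1}^{k}M_j\big)^{-1}\in\prod_{j=1}^{k}\cal Q^{+}_{i_j}$ involves none of the generators $A_{i,a}$, $a\in\C^{*}$. By algebraic independence of the $A_{l,a}$, the net power of each $A_{i,a}$ in $m'm^{-1}$ is therefore the one contributed by $N'\in\cal Q^{-}$, hence is $\le 0$; summing over $a\in\C^{*}$, the total power of the $A_{i,\cdot}$ in $m'm^{-1}$ is $\le 0$. On the other hand, applying $\wt$ to $m'm^{-1}$ and using $\wt(m'm^{-1})=\alpha_i$, $\wt(A_{l,a})=\alpha_l$, and the linear independence of the simple roots, that same total power must equal the coefficient of $\alpha_i$ in $\alpha_i$, namely $1$. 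This contradiction forces $x_{i,r}^{+}v=0$, which is the claim.

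The argument is short and I do not expect a genuine obstacle. The one place that demands care is the interface between the two gradings: one must be sure that $x^{+}_{i_{j'},r}$ cannot produce any monomial of $\mathfrak g$-weight $\wt(m)+\alpha_{i_{j'}}$ that lies outside $m\cal Q$, and then keep the bookkeeping in $\cal Q$ honest. In particular the hypothesis that each $M_j$ lies in $\cal Q^{-}_{i_j}$ with $i_1,\dots,i_N$ \emph{distinct} is exactly what is needed: it is precisely this distinctness that prevents the ``denominator'' $\prod_{j\le k}M_j$ from supplying the positive power of $A_{i_{j'},\cdot}$ that $x^{+}_{i_{j'},r}$ would otherwise be forced to create.
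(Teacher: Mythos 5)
Your proof is correct, and it matches the ingredients the paper names (the inclusion $\cal M(L(m_+))\subseteq m_+\cal Q^-$ and the algebraic independence of the $A_{j,a}$); the paper states the lemma without proof, so there is nothing more specific to compare against. The only stylistic difference is that you use the weight map $\wt:\cal P\to P$ rather than the truncation $\beta_J$ that the paper gestures at to isolate the $A_{i,\cdot}$-degree, but both play exactly the same role of a projection that detects the forbidden positive power of $A_{i_{j'},\cdot}$; as you note, invoking $x^+_{i,r}(V_m)\subseteq\bigoplus_c V_{mA_{i,c}}$ would shorten the bookkeeping further, while your route through $\wt$ and the $k_j$-commutation relation is more elementary.
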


\section{Thin special $q$-characters and tame modules}\label{s:thin tame}
In this section we discuss tame and thin modules in more detail. In particular, we describe the results in the case of $\mathfrak {sl}_N$.

\subsection{First properties}
The main objects of this paper are the tame representations. In all cases they turn out to be also thin.
We start with simple remarks about tame and thin modules.

\begin{lemma}\label{r:tameresfactor}
The restriction of a tame module to any diagram subalgebra is tame. A subfactor of a tame module is tame. \qed
\end{lemma}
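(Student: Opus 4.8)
The statement has two parts. For the first, let $V$ be a tame $\uqh g$-module and let $J\subseteq I$ be a subset of the Dynkin diagram. By definition, ``tame'' means the commutative subalgebra $\uqh h$ acts semisimply on $V$. The subalgebra $U_q(\hlie g_J)$ contains the commutative subalgebra generated by $(\phi_{j,\pm r}^\pm)_{j\in J}$, and this is a \emph{subalgebra} of $\uqh h$; hence every element that acts on $\res_J V$ via the $U_q(\hlie g_J)$-structure already acted on $V$, where it was diagonalizable. A subset of a commuting family of diagonalizable operators on a finite-dimensional space is simultaneously diagonalizable, so the Cartan currents of $U_q(\hlie g_J)$ act semisimply on $V$, i.e.\ $\res_J V$ is tame. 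I expect this to be essentially immediate from the definitions.

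For the second part, let $W$ be a subfactor of a tame module $V$, say $W = U/U'$ for submodules $U'\subseteq U\subseteq V$. Since $\uqh h$ acts semisimply on the finite-dimensional space $V$, we may decompose $V=\bigoplus_{\bs\gamma} V_{\bs\gamma}$ into honest (not merely generalized) joint eigenspaces. The key point is that each $\uqh h$-eigenspace decomposition is compatible with submodules: if $U$ is a $\uqh g$-submodule of $V$, then $U$ is in particular stable under $\uqh h$, and a $\uqh h$-stable subspace of a space on which $\uqh h$ acts semisimply inherits a semisimple action, so $U = \bigoplus_{\bs\gamma}(U\cap V_{\bs\gamma})$; the same applies to $U'$, and therefore $W = U/U' \cong \bigoplus_{\bs\gamma} (U\cap V_{\bs\gamma})/(U'\cap V_{\bs\gamma})$ as $\uqh h$-modules, on which $\uqh h$ manifestly acts semisimply. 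Hence $W$ is tame.

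There is no real obstacle here: the whole argument rests on the elementary fact that restriction or passage to subquotients preserves semisimplicity of a commuting family of operators, once one observes that (i) the relevant Cartan currents of any diagram subalgebra lie inside $\uqh h$, and (ii) submodules and their quotients are automatically $\uqh h$-stable. The only mild subtlety worth a sentence is checking that the joint generalized eigenspace decomposition \eqref{lw space decomposition} collapses to a genuine eigenspace decomposition precisely under the tameness hypothesis, which is what lets us intersect with submodules cleanly. Accordingly, the proof is short and we simply record these two observations.
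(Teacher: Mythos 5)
Your proof is correct and is precisely the standard argument the paper leaves implicit (the lemma is stated with an immediate \qed, no proof given): tameness is semisimplicity of the $U_q(\hlie h)$-action, the Cartan currents of any diagram subalgebra $U_q(\hlie g_J)$ lie inside $U_q(\hlie h)$, and semisimplicity of a module over a (here effectively commutative, since $c^{\pm 1/2}$ acts trivially) algebra passes to any subfamily of operators as well as to submodules and quotients. Nothing is missing and the route matches the paper's intent.
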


\begin{lemma}\label{r:thinresfactor}
A subfactor of a thin module is thin. \qed
\end{lemma}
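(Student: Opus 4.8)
The statement to prove is Lemma \ref{r:thinresfactor}: a subfactor of a thin module is thin.

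The plan is to reduce this to the analogous fact for tame modules (Lemma \ref{r:tameresfactor}), together with the elementary observation that the $\ell$-weight spaces of a module restricted to a diagram subalgebra refine under restriction, and that for tame (equivalently, $\uqh h$-semisimple) modules the notions of $\ell$-weight space and joint generalized eigenspace coincide. First I would recall that, since a thin module $V$ is in particular tame, the space $\uqh h$ acts semisimply, so $V=\bigoplus_{\bs\gamma} V_{\bs\gamma}$ with each $V_{\bs\gamma}$ an honest joint eigenspace of dimension $\le 1$.

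Next, let $W$ be a subfactor of $V$, say $W=W_1/W_2$ with $W_2\subset W_1\subset V$ submodules. Since $\uqh h$ acts semisimply on $V$ and is stable under the $\uqh h$-action, each $W_i$ decomposes as $W_i=\bigoplus_{\bs\gamma}(W_i\cap V_{\bs\gamma})$, and hence $W=\bigoplus_{\bs\gamma}(W_1\cap V_{\bs\gamma})/(W_2\cap V_{\bs\gamma})$. The key point is that $\uqh h$ still acts semisimply on $W$ (this is the content of Lemma \ref{r:tameresfactor}, that a subfactor of a tame module is tame), so the $\ell$-weight space $W_{\bs\gamma}$ of $W$ is exactly $(W_1\cap V_{\bs\gamma})/(W_2\cap V_{\bs\gamma})$, whose dimension is at most $\dim V_{\bs\gamma}\le 1$. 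Therefore $W$ is thin.

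There is really only one point requiring care, which I would call the main (minor) obstacle: justifying that taking a subquotient commutes with the $\ell$-weight decomposition, i.e.\ that a submodule $W_1$ of a tame module $V$ inherits the decomposition $W_1=\bigoplus_{\bs\gamma}(W_1\cap V_{\bs\gamma})$ and that the quotient's $\ell$-weight space is the quotient of the intersections. This is standard linear algebra for a semisimple family of commuting operators: a submodule of a semisimple module for a commutative algebra is again semisimple and its isotypic pieces are the intersections with those of the ambient module, and the same for quotients; one must only note that $W_1$ and $W_2$ are stable under $\uqh h$ because they are $\uqh g$-submodules and $\uqh h\subset\uqh g$. Given Lemma \ref{r:tameresfactor}, which already supplies tameness of the subfactor, this bookkeeping is immediate and the dimension bound $\dim W_{\bs\gamma}\le\dim V_{\bs\gamma}\le 1$ follows at once, completing the proof.
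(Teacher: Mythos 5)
Your proof is correct, but it takes a detour that the lemma does not require. You derive the claim from the chain \emph{thin $\Rightarrow$ tame}, then invoke Lemma~\ref{r:tameresfactor} to get semisimplicity of the subfactor, and finally read off the dimension bound. This works, but none of it is needed: the underlying linear-algebra fact — that for a commuting family of operators on a finite-dimensional space the generalized eigenspace (Jordan block) decomposition $V=\bigoplus_{\bs\gamma}V_{\bs\gamma}$ restricts to every invariant subspace as $W_1=\bigoplus_{\bs\gamma}(W_1\cap V_{\bs\gamma})$ and descends to quotients — holds with no semisimplicity hypothesis, because the projector onto each $V_{\bs\gamma}$ is a polynomial in the commuting operators and hence preserves any $\uqh h$-stable subspace. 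From this alone one gets $W_{\bs\gamma}=(W_1\cap V_{\bs\gamma})/(W_2\cap V_{\bs\gamma})$ for a subfactor $W=W_1/W_2$, and then $\dim W_{\bs\gamma}\le\dim V_{\bs\gamma}\le 1$ immediately, which is surely the reason the paper marks the statement with \(\qed\) and gives no argument. So: your proof is valid, but it uses the heavier machinery (tameness of subfactors) where the bare primary-decomposition argument already suffices; the two differ only in that you invoke the semisimple case of a fact that is true in general.
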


Next we consider tensor products.

\begin{lemma}\label{l:simplethin} Let $m_1,m_2\in \mathcal P^+$.
If the module $L(m_1)\otimes L(m_2)$ is thin, then both $L(m_1)$ and $L(m_2)$ are thin. \qed
\end{lemma}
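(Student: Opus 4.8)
The plan is to use the fact that $q$-characters are multiplicative with respect to tensor products together with the partial order structure on $\mathcal P$. First I would recall that $\chi_q(L(m_1)\otimes L(m_2)) = \chi_q(L(m_1))\chi_q(L(m_2))$, so that $\mathcal M(L(m_1)\otimes L(m_2))$ contains all products $n_1 n_2$ with $n_i\in\mathcal M(L(m_i))$, and moreover $\dim((L(m_1)\otimes L(m_2))_{n_1 n_2}) \geq \dim(L(m_1)_{n_1})\dim(L(m_2)_{n_2})$ since the contribution of each such factorization to the coefficient of a given monomial is nonnegative.

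The key step is to show that for a fixed monomial $n_1\in\mathcal M(L(m_1))$ one can choose $n_2\in\mathcal M(L(m_2))$ so that the factorization $n_1 n_2$ is "unambiguous", i.e. no other pair $(n_1', n_2')$ with $n_i'\in\mathcal M(L(m_i))$ satisfies $n_1' n_2' = n_1 n_2$. The natural choice is $n_2 = m_2$, the highest $\ell$-weight monomial of $L(m_2)$. Indeed, by \eqref{mon set} we have $\mathcal M(L(m_i))\subseteq m_i\mathcal Q^-$, so any factorization $n_1' n_2' = n_1 m_2$ with $n_1'\in m_1\mathcal Q^-$ and $n_2'\in m_2\mathcal Q^-$ forces, writing $n_1 = m_1 P_1$, $n_1' = m_1 P_1'$, $n_2' = m_2 P_2'$ with $P_1, P_1', P_2'\in\mathcal Q^-$, the relation $P_1' P_2' = P_1$ in $\mathcal Q$. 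Since $P_1, P_1', P_2'$ all lie in $\mathcal Q^-$ and $\mathcal Q$ is free abelian on the algebraically independent $A_{j,a}$, this forces $P_2' = 1$, hence $n_2' = m_2$ and $n_1' = n_1$. Therefore $\dim((L(m_1)\otimes L(m_2))_{n_1 m_2}) = \dim(L(m_1)_{n_1})\cdot\dim(L(m_2)_{m_2}) = \dim(L(m_1)_{n_1})$, using that $m_2$ is the highest $\ell$-weight so $\dim(L(m_2)_{m_2}) = 1$.

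Now if $L(m_1)\otimes L(m_2)$ is thin, then $\dim((L(m_1)\otimes L(m_2))_{n_1 m_2})\leq 1$ for every $n_1$, hence $\dim(L(m_1)_{n_1})\leq 1$ for every monomial $n_1$, i.e. $L(m_1)$ is thin; the argument for $L(m_2)$ is symmetric, using the highest $\ell$-weight monomial $m_1$ of $L(m_1)$ instead.

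The only point requiring care — and the main (small) obstacle — is the combinatorial lemma that in the free abelian group $\mathcal Q$ with basis $(A_{j,a})$, the submonoid $\mathcal Q^-$ has the property that $P_1' P_2' \in \mathcal Q^-$ with $P_1', P_2' \in \mathcal Q^-$ and $P_1' P_2' = P_1 \in \mathcal Q^-$ does not yet by itself force $P_2' = 1$ — rather one needs that $P_1'$ and $P_2'$ have no "cancellation", which is automatic precisely because both lie in $\mathcal Q^-$: every exponent of every $A_{j,a}$ in $P_1'$ and in $P_2'$ is $\leq 0$, so the exponents add without cancellation, and the exponent vector of $P_1$ being what it is leaves no freedom once we know it equals the (componentwise $\leq 0$) sum. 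I would phrase this cleanly via the homomorphisms $u_{i,a}$ (or their analogues $v_{j,a}\colon\mathcal Q\to\Z$, $v_{j,a}(A_{k,b}) = \delta_{jk}\delta_{ab}$) and the algebraic independence of the $A_{j,a}$ recorded in the paragraph preceding Lemma \ref{hws lemma}.
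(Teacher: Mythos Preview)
Your first and third paragraphs already contain a complete and correct proof: from the multiplicativity of $\chi_q$ and the nonnegativity of all coefficients you have
\[
\dim\bigl((L(m_1)\otimes L(m_2))_{n_1 m_2}\bigr)\ \geq\ \dim(L(m_1)_{n_1})\cdot\dim(L(m_2)_{m_2}) = \dim(L(m_1)_{n_1}),
\]
and thinness of the tensor product bounds the left side by $1$. This is exactly the argument the paper has in mind (the lemma is stated with an immediate \qed).

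Your second paragraph, however, is both unnecessary and incorrect. The claimed uniqueness of the factorization $n_1 m_2 = n_1' n_2'$ fails: from $P_1' P_2' = P_1$ with $P_1, P_1', P_2' \in \mathcal Q^-$ you cannot conclude $P_2' = 1$. For instance $P_1 = A_{j,a}^{-2}$ admits $P_1' = P_2' = A_{j,a}^{-1}$. Concretely, if $n_1 = m_1 A_{j,a}^{-1}$ and $m_2 A_{j,a}^{-1}$ also lies in $\mathcal M(L(m_2))$, then $(m_1)\cdot(m_2 A_{j,a}^{-1})$ is a second factorization of $n_1 m_2$. The ``no cancellation'' remark in your final paragraph does not help: writing a vector of nonpositive integers as a sum of two such vectors is far from unique. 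So the asserted equality $\dim((L(m_1)\otimes L(m_2))_{n_1 m_2}) = \dim(L(m_1)_{n_1})$ is in general false---but, as noted, you only need the inequality, which you already have. Simply delete the second paragraph and the last paragraph, and the proof stands.
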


We also have the tame analogue.

\begin{lemma}\label{l:simpletame} Let $m_1,m_2\in \mathcal P^+$.
If the module $L(m_1)\otimes L(m_2)$ is tame, then both $L(m_1)$ and $L(m_2)$ are tame. 
\end{lemma}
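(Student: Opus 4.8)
The plan is to exploit the fact that, modulo the ideal $U_q(\hlie g)X^-\otimes U_q(\hlie g)X^+$, the action of the Cartan currents $\phi_i^\pm(u)$ on $L(m_1)\otimes L(m_2)$ is the tensor product of the actions, by Lemma~\ref{e:coprod}. More precisely, I would first locate the $\ell$-weight spaces on the two factors that are relevant: let $\bs\gamma^{(1)}$ be an arbitrary $\ell$-weight of $L(m_1)$ with generalized eigenspace $W:=L(m_1)_{\bs\gamma^{(1)}}$, and let $u_2\in L(m_2)$ be the highest $\ell$-weight vector, with $\ell$-weight $\bs\gamma(m_2)$. Consider the subspace $W\otimes u_2\subseteq L(m_1)\otimes L(m_2)$. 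Because $u_2$ is highest $\ell$-weight, $X^+u_2=0$, so when we apply $\phi_i^\pm(u)=\Delta(\phi_i^\pm(u))$ to a vector $w\otimes u_2$, the "error terms" in $U_q(\hlie g)X^-\otimes U_q(\hlie g)X^+$ kill the second tensor factor; hence $\phi_i^\pm(u)(w\otimes u_2) = (\phi_i^\pm(u)w)\otimes (\phi_i^\pm(u)u_2)$ exactly (not just modulo the ideal), and the second factor contributes only the scalar $\gamma(m_2)_{i}^\pm(u)$. Therefore the operator $\phi_i^\pm(u)$ restricted to $W\otimes u_2$ is conjugate (via $w\mapsto w\otimes u_2$) to a nonzero scalar multiple of $\phi_i^\pm(u)|_W$ — I should be a little careful here: strictly, $W\otimes u_2$ need not be $\phi_i^\pm(u)$-invariant inside the whole module, since applying $\phi_i^\pm(u)$ could move $w$ into a different $\ell$-weight space of $L(m_1)$; but since $W$ is $\phi_i^\pm(u)$-invariant by definition of the generalized eigenspace decomposition \eqref{lw space decomposition}, in fact $W\otimes u_2$ \emph{is} invariant, and the restriction is exactly $(\phi_i^\pm(u)|_W)\otimes \text{scalar}$.

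Now assume $L(m_1)\otimes L(m_2)$ is tame, i.e. $U_q(\hlie h)$ acts semisimply on it. By Lemma~\ref{r:tameresfactor}-style reasoning — or directly — any $U_q(\hlie h)$-invariant subspace of a module on which $U_q(\hlie h)$ acts semisimply again carries a semisimple action. Since $W\otimes u_2$ is $U_q(\hlie h)$-invariant (each $\phi_{i,\pm r}^\pm$ preserves it by the computation above, and the $k_i$ act by scalars), the action of $U_q(\hlie h)$ on $W\otimes u_2$ is semisimple. Transporting through the isomorphism $W\cong W\otimes u_2$ of $U_q(\hlie h)$-modules, the action of each $\phi_{i,\pm r}^\pm$ on $W$ is semisimple. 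As $W = L(m_1)_{\bs\gamma^{(1)}}$ was an arbitrary $\ell$-weight space and these span $L(m_1)$, and the $\phi_{i,\pm r}^\pm$ preserve each such space, it follows that $U_q(\hlie h)$ acts semisimply on all of $L(m_1)$, i.e. $L(m_1)$ is tame. For $L(m_2)$ I would run the symmetric argument using the \emph{lowest} $\ell$-weight vector $v_1$ of $L(m_1)$: $v_1$ satisfies $X^-v_1=0$, so pairing $v_1\otimes W'$ with an arbitrary $\ell$-weight space $W'$ of $L(m_2)$ and using that the error terms lie in $U_q(\hlie g)X^-\otimes U_q(\hlie g)X^+$ — here it is the \emph{first} tensor slot that is annihilated by $X^-$ — gives $\phi_i^\pm(u)(v_1\otimes w') = (\text{scalar})\, v_1\otimes\phi_i^\pm(u)w'$, and the same conclusion follows. (Alternatively: $L(m_2)$ is a subquotient of $L(m_1)\otimes L(m_2)$ after a suitable twist, but the lowest-weight-vector argument is cleaner and self-contained.)

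The main obstacle I anticipate is the bookkeeping around Lemma~\ref{e:coprod}: the lemma only controls $\Delta(\phi_i^\pm(u))$ modulo $U_q(\hlie g)X^-\otimes U_q(\hlie g)X^+$, so I must make sure that in the chosen subspaces ($W\otimes u_2$, resp. $v_1\otimes W'$) those correction terms genuinely vanish on the relevant vectors, and that the surviving term $\phi_i^\pm(u)\otimes\phi_i^\pm(u)$ preserves the subspace. The first point is exactly the highest/lowest $\ell$-weight condition ($X^+u_2=0$ kills the right factor of any element of $U_q(\hlie g)X^-\otimes U_q(\hlie g)X^+$; $X^-v_1=0$ kills the left factor); the second point is the invariance of $\ell$-weight generalized eigenspaces under all $\phi_{i,\pm r}^\pm$, which is built into the decomposition \eqref{lw space decomposition}. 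Once these two observations are nailed down the argument is purely formal linear algebra: restriction of a semisimple operator to an invariant subspace is semisimple, and a module on which a commuting family acts semisimply on a spanning set of invariant subspaces has the family acting semisimply. I would also remark in passing that this same mechanism, applied with $\dim$ in place of semisimplicity, reproves Lemma~\ref{l:simplethin}, which is presumably why the two lemmas are stated together.
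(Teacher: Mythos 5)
Your proof is correct and follows essentially the same route as the paper's: both use Lemma~\ref{e:coprod} together with the vanishing of $X^+$ on the highest $\ell$-weight vector of $L(m_2)$ (resp.\ of $X^-$ on the lowest $\ell$-weight vector of $L(m_1)$) to identify $U_q(\hlie h)$-invariant subspaces of the form $L(m_1)\otimes u_2$ and $v_1\otimes L(m_2)$ on which the Cartan currents act as an invertible scalar series times their action on the respective factor, and then conclude by restricting a semisimple action to an invariant subspace. The only cosmetic difference is that you work one generalized $\ell$-weight space $W$ of $L(m_1)$ at a time, whereas the paper works with all of $L(m_1)\otimes w_1$ at once and is somewhat looser in asserting the existence of a product eigenbasis; your version makes the invariance bookkeeping slightly more explicit.
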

\begin{proof}
Let $\{v_j\}_{j=1}^{n_1}$ be a basis of $L(m_1)$ and $\{w_j\}_{j=1}^{n_2}$ be a basis of $L(m_2)$ such that $\phi^\pm_i(u)$, $i\in I$, act diagonally in the basis $v_j\otimes w_k$. We also assume that $w_1$ is the highest weight vector and $v_{n_1}$ is the lowest weight vector. 

Let $f^\pm_i(u)$ and $g_i^\pm(u)$ be series in $u^{\pm1}$ defined by 
$\phi_i^\pm(u)w_1=f_i^\pm(u)w_1$ and $\phi_i^\pm(u)v_{n_1}=g_i^\pm(u)v_{n_1}$. Note that the series 
$f^\pm_i(u)$ and $g_i^\pm(u)$ are invertible.

By Lemma \ref{e:coprod}, we have
$$ f_i^\pm(u)\ (\phi_i^\pm(u)v_j)\otimes w_1=\phi_i^\pm(u)(v_j\otimes w_1)\in \C[[u^{\pm1}]]\ v_j\otimes w_1,$$
$i\in I$, $j=1,\dots,n_1,$ and 
$$ g_i^\pm(u)\ v_n\otimes (\phi_i^\pm(u) w_j)=\phi_i^\pm(u)(v_{n_1}\otimes w_j)\in \C[[u^{\pm1}]]\ v_{n_1}\otimes w_j,$$
$i\in I$, $j=1,\dots,n_2$. 

The lemma follows.
\end{proof}
The converse statements of Lemma \ref{l:simplethin} and Lemma \ref{l:simpletame} are false. 
For example, a two dimensional irreducible evaluation module of $U_q(\hlie {sl}_2)$ is tame and thin, but its tensor square is not tame and not thin.

The statements similar to Lemma \ref{l:simplethin} and Lemma \ref{l:simpletame}, for tensor products with more than two factors easily follow by induction.

\medskip
\subsection{The set $\cal X$ and a reduction of the problem.}\label{chi sec}
Throughout this paper we work only with simple finite-dimensional representations of $\uqh g$, for $\g$ of types $A$ and $B$.
We show that it is sufficient to study tame modules with dominant monomials belonging to the following subset of $\cal P^+$. 

Define the subset $\cal X\subset I\times \Z$ by
\begin{itemize}
\item[Type A] $\cal X:= \{(i,k)\in I\times \Z | i-k \equiv 1 \mod 2\}$.
\item[Type B] $\cal X:= \{(N, 2k +1)| k\in Z\}\bigsqcup \{(i,k)\in I\times \Z| i<N \ {\rm and}\ k\equiv 0 \mod 2\}$.
\end{itemize}

It is known that for all $m_+\in \cal P^+$, the module $L(m_+)$ can be written as a tensor product $L(m_+)=\otimes_{a\in\C^*} L(m_a)$ 
where each $L(m_a)$ is a module such that 
\begin{equation}\label{e:qcharX}
\chi_q(L(m_a))\in \Z[Y_{i,aq^k}^{\pm 1}]_{(i,k)\in \cal X}.
\end{equation} 

Clearly $L(m_+)$ is thin if and only if each $L(m_a)$ is thin. 
Therefore, by Lemma \ref{l:simpletame}, to classify all tame modules, it is sufficient to classify all tame modules for modules $L(m_a)$ satisfying (\ref{e:qcharX}) and to show that these modules are thin. It is done in Theorem \ref{mainthm2}. Then it follows  that $L(m_+)$ is tame if and only if all $L(m_a)$ are tame and, moreover, $L(m_+)$ is tame if and only if it is thin.

\medskip

Thus, we pick and fix a $c\in \C^*$ and to the rest of the paper we consider only representations 
whose $q$-characters lie in the subring $\Z[Y_{i,cq^k}^{\pm 1}]_{(i,k)\in \cal X}$.

Recall that we have $r_i=1$ for type $A_N$, and in type $B_N$ we have $r_i=2$, $i<N$, $r_N=1$. We define
$$\cal W:= \{(i,k)| (i,k-r_i)\in \cal X\}.$$
Then we have $\cal M(L(m_+))\subseteq m_+\Z[A_{i,cq^k}^{-1}]_{(i,k)\in \cal W}$, for all $m_+\in \Z[Y_{i,cq^k}]_{(i,k)\in \cal X}$. This is a refinement of (\ref{mon set}).

Henceforth, by an abuse of notation, we  write
$$Y_{i,k}:= Y_{i,cq^k},\quad A_{i,k}:= A_{i,cq^k}, \quad u_{i,k}:= u_{i,cq^k},$$
for all $(i,k)\in \cal X$. 

We denote by $\mathcal P_{\cal X}$ (resp. $\cal P_{\cal X}^{+}$) the subgroup (resp. submonoid) of $\cal P$ generated by $Y_{i,k},\ (i,k)\in \cal X$.

Given $m\in \mathcal P_{\cal X}^+$, we always write $m = \prod_{t=1}^T Y_{i_t,k_t}$  in such a way that 
$k_{t+1}\geq k_t$, and $i_{t+1}\geq i_t$ whenever $k_{t+1}=k_t$. We denote  the ordered sequence $(i_t,k_t)_{1\leq t\leq T}$ by $\cal X(m)$.	

\subsection{Thin $U_q(\hlie {sl}_2)$-modules}

In this subsection we fix $\g = \mathfrak {sl}_2$. 

The set $S_k(b):= \{bq^{-k+1}, bq^{-k+3},\ldots, bq^{k-1}\}$ is called the \emph{$q$-string} of length $k\in \Z_{\geq 0}$ centered at $b\in \C^*$. Two $q$-strings are said to be in \emph{general position} if one contains the other or their union is not a $q$-string. Let $m_b^{(k)}:= Y_{1,bq^{-k+1}}Y_{1,bq^{-k+3}}\ldots Y_{1,bq^{k-1}}$. For each $m_+\in \cal P$ there is a unique multiset of $q$-strings in pairwise general position, denoted by $\gbr S_q(m_+)$, such that
\begin{equation}\label{e:qstrings}
\gbr S_q(m_+) = \{S_{k_t}(b_t)| 1\leq t\leq n\}\quad {\rm and} \quad m_+ = \prod_{t=1}^{n}m_{b_t}^{(k_t)}. 
\end{equation} 
Moreover, 
$$L(m_+) \cong \bigotimes_{t=1}^{n} L(m_{b_t}^{(k_t)}),$$
where $L(m_b^{(k)})$ is an evaluation module, and
\begin{equation}\label{e:qchev}
\chi_q(L(m_b^{(k)})) = m_b^{(k)}\left(1+ \sum_{t=0}^{k-1}A_{1,bq^k}^{-1}A_{1,bq^{k-2}}^{-1} \ldots A_{1,bq^{k-2t}}^{-1}\right),
\end{equation}
see \cite{CP94a}.

The module $L(m_+)$ is a thin simple finite-dimensional representation if and only if each two $q$-strings in $\gbr S_q(m_+)$ are  pairwise disjoint, see \cite{NT98}. Moreover, we have the following. 

\begin{lemma}[\cite{NT98}] \label{thin sl2} Let $V$ be a finite-dimensional simple $U_q(\hlie{sl}_2)$-module. Then the $V$ is tame if and only if $V$ is thin. All thin modules are special. \qed
\end{lemma}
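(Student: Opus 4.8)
The plan is to prove both implications of the equivalence and then derive specialness. The direction ``thin $\Rightarrow$ tame'' is already observed in the text (a thin module has all $\ell$-weight spaces of dimension one, so every $\phi_{i,\pm r}^{\pm}$ acts diagonally), so the work is in the converse: if $V=L(m_+)$ is tame then it is thin. Writing $m_+$ via its $q$-string decomposition, I would use the fact from \cite{NT98} recalled just above that $L(m_+)$ is thin if and only if the $q$-strings in $\gbr S_q(m_+)$ are pairwise disjoint. Thus it suffices to show: if two of the $q$-strings, say $S_{k}(b)$ and $S_{k'}(b')$, overlap (intersect but are in general position, i.e.\ neither contains the other and the union is not a $q$-string), then $V$ is not tame.

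First I would reduce to the case of exactly two strings. By Lemma \ref{l:simpletame}, if $L(m_+)=\bigotimes_t L(m_{b_t}^{(k_t)})$ is tame then every tensor subproduct is tame; so it is enough to derive a contradiction from the tameness of $L(m_b^{(k)})\otimes L(m_{b'}^{(k')})$ for a single overlapping pair. Using the explicit $q$-characters \eqref{e:qchev} of the two evaluation modules and the formula for $A_{1,a}$, I would locate a monomial $m$ appearing with multiplicity $\geq 2$ in the product $\chi_q(L(m_b^{(k)}))\cdot\chi_q(L(m_{b'}^{(k')}))$: because the strings overlap, there is a shared index $a$ at which the two factors' lowering terms $A_{1,\bullet}^{-1}$ can be applied in two genuinely different ways (lower the first factor past the overlap, or lower the second), producing the same monomial. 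This is a finite, explicit combinatorial check on $q$-strings; the non-root-of-unity hypothesis on $q$ guarantees the relevant powers of $q$ are distinct exactly when the $q$-strings dictate. Having a monomial $m$ with $\dim V_m\geq 2$, I must upgrade ``not thin'' to ``not tame'', i.e.\ show the Cartan current does not act semisimply on $V_m$; for $U_q(\hlie{sl}_2)$ this is classical and can be cited from \cite{NT98} (the two-dimensional evaluation module is tame but its tensor square is not, as noted right after Lemma \ref{l:simpletame}, and the general overlapping case reduces to containing such a bad subfactor by Lemma \ref{r:tameresfactor}).

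For the last assertion, that all thin $U_q(\hlie{sl}_2)$-modules are special: a thin simple module is $L(m_+)$ with $\gbr S_q(m_+)$ pairwise disjoint. From \eqref{e:qchev} each evaluation factor $L(m_b^{(k)})$ has a single dominant monomial, namely $m_b^{(k)}$ itself (every other monomial in \eqref{e:qchev} is obtained by multiplying by $A_{1,\bullet}^{-1}$'s and, since the string has no repeated entries, strictly decreases some $u_{1,a}$ to a negative value). For the tensor product, the disjointness of the strings ensures there is no cancellation: the only dominant monomial of $\prod_t\chi_q(L(m_{b_t}^{(k_t)}))$ is $\prod_t m_{b_t}^{(k_t)}=m_+$. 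Hence $L(m_+)$ is special.

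\textbf{Main obstacle.} The genuinely delicate point is the step from ``$\dim V_m\geq 2$'' to ``the action of $\uqh h$ on $V$ is not semisimple'': non-thinness does not in general force non-tameness, so one cannot argue purely on dimension counts. The argument must exhibit, inside $V$ (or a subquotient), a concrete two-dimensional indecomposable Jordan block for some $\phi_{1,r}^{\pm}$ — equivalently, locate the specific ``bad'' $\mathfrak{sl}_2$ evaluation-square subfactor produced by an overlapping pair of $q$-strings. I expect this to be handled exactly as in \cite{NT98}, by computing the action of a single $h_{1,r}$ on the two relevant weight vectors and checking the off-diagonal entry is nonzero; the role of the hypothesis that $q$ is not a root of unity is precisely to keep that entry from vanishing.
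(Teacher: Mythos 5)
The paper does not prove this lemma: the \qed directly after the statement indicates it is taken wholesale from \cite{NT98}, so there is no in-paper argument to compare against, and the question is whether your plan stands on its own. The ``thin $\Rightarrow$ tame'' direction, the reduction to a single pair of $q$-strings via Lemma \ref{l:simpletame}, and the specialness argument (disjoint strings in general position have disjoint monomial supports, so there is no cancellation and $m_+$ is the unique dominant monomial) are all fine.

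The gap is in the step you yourself flag as the main obstacle, and your proposed fix for it does not work. You want to locate a ``bad'' evaluation-square subfactor inside $V = L(m_b^{(k)})\otimes L(m_{b'}^{(k')})$ and invoke Lemma \ref{r:tameresfactor}. But the $q$-strings in $\gbr S_q(m_+)$ are by construction in pairwise general position, and general position is precisely the Chari--Pressley criterion that makes this tensor product \emph{irreducible}; an irreducible module has no proper subfactors, and for $\mathfrak{sl}_2$ there is no smaller diagram subalgebra to restrict to, so Lemma \ref{r:tameresfactor} gives nothing. The Jordan block has to be exhibited inside $V$ itself by a direct computation with the Cartan currents on the multiplicity-two $\ell$-weight space (this is what \cite{NT98} actually does), and at that point your argument is not independent of the reference --- it is the same citation the paper makes. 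A secondary slip: two $q$-strings in general position that intersect necessarily have one \emph{containing} the other (if neither contained the other, their union would again be a $q$-string, contradicting general position), so the failure of thinness is exactly string containment; the scenario ``neither contains the other and the union is not a $q$-string'' that you describe is vacuous for intersecting strings.
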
 

The following useful lemma describes the monomials which can be found in a simple thin module.

\begin{lemma}\cite[Lemma 3.1]{MY12a}\label{sl2 monomials}
Let $m\in \cal P$. There exists $m_+\in \cal P^+$ such that $L(m_+)$ is thin and $m\in \cal M(L(m_+))$ if and only if, for all $b\in \C^*$, $|u_{1,b}(m)|\leq 1$ and $u_{1,b}(m) - u_{1,bq^2}(m) \neq 2$. Moreover, $mA_{1,bq}^{-1}$ is a monomial of $L(m_+)$ iff $u_{1,b}(m) =1$ and $u_{1,bq^2}(m) = 0$. \qed
\label{mon sl2 condition}
\end{lemma}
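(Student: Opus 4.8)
The plan is to exploit the very explicit description of thin simple $\uqh{sl}_2$-modules recorded just above: by \eqref{e:qstrings} and the criterion following it, a thin simple $L(m_+)$ is exactly a tensor product $\bigotimes_t L(m_{b_t}^{(k_t)})$ whose $q$-strings $S_{k_t}(b_t)$ are \emph{pairwise disjoint}, and by \eqref{e:qchev} each tensor factor has the totally ordered chain of monomials $m_{b_t}^{(k_t)},\ m_{b_t}^{(k_t)}A_{1,b_tq^{k_t}}^{-1},\ \ldots$ obtained by successively peeling off $A_{1,c}^{-1}$ along the string from the right end inward. I would first translate the numerics: for a single string factor, a monomial $m$ in $\cal M(L(m_b^{(k)}))$ is obtained by choosing a ``cut point'' along $S_k(b)$, so $m$ has a run of $Y_{1,c}^{-1}$'s (equivalently shifted $Y$'s with exponent $\pm1$) and one checks directly from \eqref{e:qchev} that for every $c$ one has $u_{1,c}(m)\in\{-1,0,1\}$ and that the pattern $\dots,1,\dots$ at position $c$ forces $u_{1,cq^2}(m)\neq 1$ with the value $0$ there — i.e. $u_{1,c}(m)-u_{1,cq^2}(m)\neq 2$. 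Since in a thin module the strings are disjoint, the contributions of distinct factors live on disjoint arithmetic progressions in $c$, so these local inequalities are inherited by the full tensor product. This proves the ``only if'' direction and, reading \eqref{e:qchev} once more, the last sentence of the lemma: $mA_{1,bq}^{-1}$ appears iff at position $b$ we are still at $+1$ and at $bq^2$ we have already passed to $0$, which is precisely $u_{1,b}(m)=1$, $u_{1,bq^2}(m)=0$.

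For the ``if'' direction I would take a monomial $m$ satisfying $|u_{1,c}(m)|\le 1$ and $u_{1,c}(m)-u_{1,cq^2}(m)\neq2$ for all $c\in\C^*$, and construct an explicit dominant $m_+$ with $L(m_+)$ thin and $m\in\cal M(L(m_+))$. The idea is to ``close up'' $m$: wherever $m$ has a negative exponent $u_{1,c}(m)=-1$, multiply by the appropriate $A_{1,\cdot}$ to cancel it, tracking that the condition $u_{1,c}-u_{1,cq^2}\neq2$ guarantees the negative part of $m$ decomposes into a disjoint union of ``intervals'' each of which is exactly the tail $A_{1,bq^k}^{-1}\cdots A_{1,bq^{k-2t}}^{-1}$ of a $q$-string, so that $m = m_+\prod(\text{such tails})$ with $m_+$ dominant and the underlying strings disjoint. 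Then $L(m_+)$ is thin by the cited criterion and $m\in\cal M(L(m_+))$ by \eqref{e:qchev} applied factor by factor. I would do the bookkeeping in terms of the integer-valued function $c\mapsto u_{1,c}(m)$ on each fixed coset $bq^{2\Z}$, where the hypotheses say the values lie in $\{-1,0,1\}$ and never jump from $1$ down to $-1$ in one step of $q^2$; this is precisely the combinatorics of disjoint strings.

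The only mildly delicate point — and the one I would write out carefully — is the decomposition step in the ``if'' direction: showing that the forbidden-jump condition really does force the negative part of $m$ to be a disjoint union of string-tails compatible with a dominant prefix, rather than something that would need overlapping strings to realize. Everything else is a direct reading of formulas \eqref{e:qstrings}–\eqref{e:qchev} and the disjointness criterion for thinness, all of which are available from \cite{NT98} and \cite{CP94a} as quoted above; indeed this is exactly \cite[Lemma 3.1]{MY12a}, so the proof can be kept short by citing that bookkeeping in detail only where the arithmetic-progression argument is needed.
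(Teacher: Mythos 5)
The paper does not actually prove this lemma; it cites \cite[Lemma 3.1]{MY12a} and closes the statement with $\square$, so there is no in-paper argument to compare against. Your reconstruction follows the route one would expect: reduce to the classification of thin simple $U_q(\hlie{sl}_2)$-modules as tensor products $\bigotimes_t L(m_{b_t}^{(k_t)})$ over pairwise disjoint $q$-strings in general position, and read off the constraints from the explicit chain \eqref{e:qchev}. That is the right approach and the pieces you cite are exactly the ones needed.

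Two remarks. First, a small slip in the ``only if'' direction: for a single string factor with $u_{1,c}(m)=1$, what \eqref{e:qchev} forces is $u_{1,cq^2}(m)\neq -1$ (the value there is $0$ or $1$), not $u_{1,cq^2}(m)\neq 1$; the conclusion $u_{1,c}(m)-u_{1,cq^2}(m)\neq 2$ is the correct one. You should also spell out why the local bounds survive the tensor product: within a given coset $bq^{2\Z}$, the ``influence zone'' of a string (the positions on which the factor's monomials have a nonzero exponent) is the string together with exactly one extra step on the right, and pairwise disjointness plus general position forces these zones to be disjoint as well, so the exponent of the product at any given position comes from at most one factor. Second, the decomposition step you flag as delicate is indeed the heart of the ``if'' direction, and you leave it as a promissory note. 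The essential observation is that the no-jump condition forces every maximal run of $+1$'s to be followed by a $0$ and every maximal run of $-1$'s to be preceded by a $0$, so the nonzero support in each coset partitions into segments of shape $1^a\,0\,(-1)^b$ with $a+b\ge 1$; reading each segment as (influence zone of) a string of length $a+b$ with $b$ lowering moves applied produces a dominant $m_+$ whose strings are automatically separated by at least one position, hence disjoint and in general position, so $L(m_+)$ is thin and $m\in\cal M(L(m_+))$. With that written out the argument is complete, and the ``moreover'' clause follows exactly as you say, factor by factor, once one notes that the sets of $A_{1,\cdot}$-positions attached to distinct factors are also disjoint.
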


\subsection{Sufficient criteria for determining the $q$-character of a thin special module.}

Given a set of monomials $\cal M$, \cite[Theorem 3.4]{MY12a} gives sufficient conditions to guarantee it corresponds to the $q$-character of a thin special finite-dimensional $\uqh g$-module. 

\begin{thm}\cite{MY12a}
Let $m_+\in \cal P^+$. Suppose that $\cal M\subseteq \cal P$ is a finite set of distinct monomials such that 
\begin{enumerate}
\item $\{m_+\} = \cal P^+\cap \cal M$,
\item for all $m\in \cal M$ and all $(i,a)\in I\times \C^*$, if $mA^{-1}_{i,a}\notin \cal M$ then $mA^{-1}_{i,a}A_{j,b}\notin \cal M$ unless $(j,b)=(i,a)$,
\item for all $m\in \cal M$ and all $i\in I$ there exists $M\in \cal M$, $i$-dominant, such that
$$\chi_q(L(\beta_i(M))) = \sum_{m'\in m\Z[A_{i,a}^{\pm 1}]_{a\in \C^*}\cap \cal M}\beta_i(m').$$
Then
$$\chi_q(L(m_+)) = \sum_{m\in \cal M}m,$$
and, in particular, $L(m_+)$ is thin and special.
\end{enumerate} 
\qed
\label{thincriteria}
\end{thm}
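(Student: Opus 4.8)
The plan is to identify $\sum_{m\in\cal M}m$ with the output of the Frenkel--Mukhin recursive algorithm \cite{FM01} applied to $m_+$, and then to quote the Frenkel--Mukhin theorem that this output equals $\chi_q(L(m_+))$ once it is known to have a unique dominant monomial. Throughout one fixes a total order on $m_+\cal Q^-$ refining the partial order $\le$ and processes monomials from the top. First I would run the algorithm and compare its output with $\cal M$ by induction along this order. Initially the output is $m_+$, which matches $\cal M\cap\cal P^+$ by the first hypothesis. Suppose the algorithm has reached a monomial $m$ and its output so far agrees with the elements of $\cal M$ already processed. For each $i\in I$ one inspects the $i$-string data through $m$: combining the commuting $\tau_i$-diagram with the third hypothesis, the $\mathfrak{sl}_2$-character that the algorithm is forced to realize on the $\Z[A_{i,a}^{\pm1}]$-coset of $m$ is exactly $\chi_q(L(\beta_i(M)))=\beta_i\big(\sum_{m'\in m\Z[A_{i,a}^{\pm1}]\cap\cal M}m'\big)$, where $M$ is the $i$-dominant monomial supplied by that hypothesis. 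Hence the monomials the algorithm introduces one step below $m$ are precisely those of $\cal M$ of the form $mA_{i,a}^{-1}$, each with coefficient $1$.

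The remaining point in this first step is that these demands must be mutually consistent: the monomials the algorithm is told to add, or to omit, coming from different vertices $i$ and from different already-processed monomials, must never collide. This is where the second hypothesis enters: if $mA_{i,a}^{-1}\notin\cal M$ then it is not of the form $m'A_{j,b}^{-1}$ for any $m'\in\cal M$ and any $(j,b)\ne(i,a)$, so a monomial that the recursion declines to introduce along one $\mathfrak{sl}_2$-string is never silently reintroduced along another; together with the $\mathfrak{sl}_2$ combinatorics of Lemma \ref{sl2 monomials} this shows the recursion is well-defined, never raises a multiplicity above $1$, and terminates with output $\sum_{m\in\cal M}m$. With the algorithm's output identified, I would then invoke the Frenkel--Mukhin result \cite{FM01}: since by the first hypothesis this output has $m_+$ as its only dominant monomial, it coincides with $\chi_q(L(m_+))$ and $L(m_+)$ is special; as all of its coefficients equal $1$, the module is thin and $\cal M(L(m_+))=\cal M$. (Alternatively, the inclusion $\cal M\subseteq\cal M(L(m_+))$ can be obtained directly by induction on $\het(m_+m^{-1})$ using Lemma \ref{sl2 char} at the $i$-dominant monomials furnished by the third hypothesis, reducing the problem to the reverse inclusion and to specialty.)

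The main obstacle is the consistency analysis in the first step. One has to describe, for a fixed monomial $m$, all pairs $(i,a)$ with $mA_{i,a}\in\cal M$ (equivalently, all single steps leading into $m$), and show that the three hypotheses force the configuration of $\cal M$ in a neighbourhood of $m$ to be a union of $\mathfrak{sl}_2$-strings carrying no repeated contributions, so that running the recursion along any vertex yields the same coefficients and never produces a monomial outside $\cal M$. Everything else is routine: the passage from $\chi_q(L(m_+))=\sum_{m\in\cal M}m$ to ``thin and special'', and the verification that the unique-dominant-monomial hypothesis of the Frenkel--Mukhin theorem is satisfied, are both immediate from the stated hypotheses, and the refinement \eqref{mon set} of the support already guarantees that the algorithm only ever moves within $m_+\cal Q^-$.
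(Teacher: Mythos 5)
This theorem is quoted in the paper from \cite{MY12a} with a \verb|\qed|; the paper itself gives no proof, so the comparison has to be against the argument in \cite{MY12a}. Your overall strategy --- run the Frenkel--Mukhin recursion, identify its output with $\sum_{m\in\cal M}m$, and then conclude by citing the FM theorem --- is in the right spirit, and the roles you assign to the three hypotheses are essentially the correct ones. But as written the argument has two genuine gaps.

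First, the step you describe as ``invoke the Frenkel--Mukhin result: since the output has $m_+$ as its only dominant monomial, it coincides with $\chi_q(L(m_+))$'' is not what \cite{FM01} proves. The theorem there states that the algorithm's output equals $\chi_q(L(m_+))$ \emph{provided one already knows that $L(m_+)$ is special}; specialty is a hypothesis, not a conclusion. Here specialty is exactly what you are trying to establish, so the citation as stated is circular. What one can actually extract from \cite{FM01} is the characterization $\operatorname{Im}\chi_q=\bigcap_i\mathcal K_i$: hypothesis (iii) ensures $\sum_{m\in\cal M}m$ is $i$-closed for every $i$ and hence lies in $\operatorname{Im}\chi_q$, and one then has to argue, writing $\sum_{m\in\cal M}m=\sum_k c_k\,\chi_q(L(n_k))$ and using hypotheses (i) and (ii) plus a lower-bound comparison with $\chi_q(L(m_+))$ coming from Lemmas~\ref{sl2 char} and~\ref{hws lemma}, that $c_k\ge 0$ and then that all $c_k$ with $n_k\ne m_+$ vanish. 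That residual positivity/cancellation argument is exactly where (ii) enters, and it is nontrivial; it cannot be replaced by the bare phrase ``unique dominant monomial $\Rightarrow$ equality with $\chi_q$.''

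Second, you explicitly flag the consistency analysis as ``the main obstacle'' and then do not carry it out; in fact that analysis \emph{is} the proof. In particular, the inductive descent in your alternative route --- applying Lemma~\ref{sl2 char} at the $i$-dominant monomial $M$ supplied by (iii) --- requires knowing that some vector in $L(m_+)_M$ is a highest $\ell$-weight vector for $U_q(\hat{\mathfrak{sl}}_2)_i$. Lemma~\ref{hws lemma} gives this only when $M m_+^{-1}$ has no $A_{i,\cdot}^{-1}$ factors, which is not automatic for an arbitrary $i$-dominant $M\in\cal M$; one needs to argue inductively that the $\mathfrak{sl}_2$-string above $M$ is already known to terminate at $M$ inside $\cal M(L(m_+))$, and this again requires the two-sided control that hypotheses (ii) and (iii) jointly furnish. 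So the proposal identifies the correct ingredients but stops short of the actual work, and as it stands it is not a complete proof.
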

We use this theorem below to compute $q$-characters of all thin irreducible modules in types $A$ and $B$.

\subsection{The $\lie {sl}_N$ case.}

In this subsection let $\g = \lie{sl}_N$. We recall the well-known results about tame simple representations of $U_q(\hlie g)$.

A point $(i',k')\in \cal X$ is said to be in \emph{snake position} to $(i,k)\in \cal X$ if 
$$k'-k \geq 2 + |i'-i|.$$
A sequence of points $(i_t,k_t)\in \cal X$, $1\leq t\leq T$, $T\in \Z_{\geq 0}$, is called a \emph{snake} if $(i_t,k_t)$ is in snake position to $(i_{t-1},k_{t-1})$ for all $2\leq t \leq T$. The following is essentially a result of 
\cite[Theorem 4.1]{NT98}, see also \cite[Lemma 4.7]{FM01} and  \cite[Theorem 6.1]{MY12a}. 

\begin{thm}\cite{NT98}
Let $\g = \lie{sl}_N$. Let $m_+\in\mathcal P_{\cal X}^+$ and let $\cal X(m_+)$ be a snake. Then $L(m_+)$ is special, thin and therefore tame.
\label{snakethin}\qed
\end{thm}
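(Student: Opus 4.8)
The plan is to deduce this from the $\mathfrak{sl}_N$ case of \cite{NT98} by translating the snake condition into the language of $q$-strings vertex-by-vertex, and then invoking the sufficiency criterion of Theorem \ref{thincriteria}. First I would fix $m_+\in\mathcal P_{\cal X}^+$ with $\cal X(m_+)=(i_t,k_t)_{1\le t\le T}$ a snake, and set $\cal M:=\mathcal M$ to be the candidate set of monomials produced by the recursive Frenkel--Mukhin algorithm of \cite{FM01} started from $m_+$ (equivalently, the set forced by conditions (i)--(iii) of Theorem \ref{thincriteria}). The goal is to show this $\cal M$ satisfies those three conditions, which simultaneously gives $\chi_q(L(m_+))=\sum_{m\in\cal M}m$, thinness, and specialness; tameness is then automatic since thin implies tame.

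The heart of the argument is condition (iii): for each $i\in I$ and each $m\in\cal M$, one needs an $i$-dominant $M\in\cal M$ whose $U_q(\hat{\mathfrak{sl}}_2)_i$-character (via $\beta_i$) accounts for exactly the $A_{i,a}^{\pm1}$-string through $m$ inside $\cal M$. Here I would use Lemma \ref{sl2 char} and Lemma \ref{sl2 monomials}: because everything lives in $\mathcal P_{\cal X}$, the variables $Y_{i,k}$ that can appear at vertex $i$ sit on a single arithmetic progression of spectral parameters, so $\beta_i(m_+)$ is a product of $q$-strings, and the snake condition $k_{t+1}-k_t\ge 2+|i_{t+1}-i_t|\ge 2$ forces these $q$-strings at each fixed vertex $i$ to be \emph{pairwise disjoint}. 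By Lemma \ref{thin sl2} this makes each $\beta_i(L(m_+))$ thin and special, which is precisely what is needed to verify (iii) locally. Condition (ii) is the "no simultaneous creation/annihilation" statement; it follows from the description of which $mA_{i,a}^{-1}$ lie in a thin $\mathfrak{sl}_2$-module in Lemma \ref{sl2 monomials} together with the disjointness of strings at neighboring vertices, which the snake inequality $k_{t+1}-k_t\ge 2+|i_{t+1}-i_t|$ is exactly designed to guarantee. Condition (i), that $m_+$ is the unique dominant monomial, is the specialness claim and drops out of the Frenkel--Mukhin algorithm once (ii) and (iii) hold, or can be argued directly: any other dominant monomial would have to arise by lowering at some vertex and could not be re-balanced given the string separation.

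The main obstacle I anticipate is bookkeeping the interaction between \emph{adjacent} vertices $i$ and $i\pm1$: a move $A_{i,a}^{-1}$ changes the $Y_{i\pm1,*}$-content, and one must check that after such a move the resulting monomial still has disjoint (not merely general-position) $q$-strings at every vertex, so that thinness propagates. This is where the precise form of the snake inequality, the alternating parity built into $\cal X$, and the definition of $\cal W$ all have to be used carefully; the inequality $k'-k\ge 2+|i'-i|$ is tight and a weaker separation would already fail. I would organize this as a single induction on $T$ (the length of the snake): removing the last point $(i_T,k_T)$ leaves a snake, $L(m_+')$ is thin and special by induction, and one shows the extra factor $Y_{i_T,k_T}$ only adds a disjoint $q$-string at vertex $i_T$ and does not collide with anything below it, so the hypotheses of Theorem \ref{thincriteria} are preserved. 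Everything else is routine verification using the lemmas already recalled.
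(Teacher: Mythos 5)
Note first that the paper gives no proof of this theorem: it is stated without argument and attributed to \cite{NT98} (see also \cite[Lemma 4.7]{FM01} and \cite[Theorem 6.1]{MY12a}), and the text only comments that ``This theorem is proved by an explicit computation of the $q$-character.'' So there is no in-paper proof for you to match; your proposal must stand on its own.

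Your strategy---verify conditions (i)--(iii) of Theorem~\ref{thincriteria} for the set $\cal M$ produced by the Frenkel--Mukhin algorithm---is the right one, and it parallels what the paper does for the type~$B$ analogue (Theorem~\ref{mainthm1}). The genuine gap is that you only control the $\mathfrak{sl}_2$-restrictions at the \emph{highest weight} monomial $m_+$, whereas conditions (ii) and (iii) of Theorem~\ref{thincriteria} are conditions on \emph{every} $m\in\cal M$, including monomials far below $m_+$. The lemmas you cite (Lemma~\ref{sl2 char}, Lemma~\ref{sl2 monomials}, Lemma~\ref{thin sl2}) describe thin $\mathfrak{sl}_2$-modules once you already have them; they do not by themselves supply the required structural control on the candidate set $\cal M$. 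Your proposed induction on $T$ does not close this gap either: the candidate set for a snake of length $T$ is not obtained from that of the truncated snake simply by ``adding a disjoint $q$-string at vertex $i_T$,'' because snake modules are typically prime, so $\chi_q(L(m_+))$ is a strict subset of $\chi_q(L(m_+'))\cdot\chi_q(L(Y_{i_T,k_T}))$, carved out by a nontrivial global compatibility between the two factors. What actually makes the verification of (i)--(iii) tractable in the literature is an explicit combinatorial model for $\cal M$---non-overlapping paths as in \cite[Section 6]{MY12a}, or tableaux/Gelfand--Tsetlin patterns as in \cite{NT98}---giving a uniform closed-form description of every monomial in $\cal M$. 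The ``routine verification'' you defer at the end is precisely where the proof lives, and your plan does not yet supply it.
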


This theorem is proved by an explicit computation of the $q$-character. The converse statement is also essentially a result of \cite[Theorem 4.1]{NT98}. We give a proof to illustrate the methods we use in the proof of Theorem \ref{mainthm2}.

\begin{thm}\cite{NT98}
Let $\g = \lie{sl}_N$. Let $m_+ \in\mathcal P^+_{\cal X}$.
Then $L(m_+)$ is a tame representation if and only if $\cal X(m_+)$ is a snake.
\label{NT}
\end{thm}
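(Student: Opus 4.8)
The plan is to prove both directions of the equivalence, with the ``if'' direction already supplied by Theorem \ref{snakethin}, so the real content is the ``only if'' direction: if $L(m_+)$ is tame then $\cal X(m_+)$ must be a snake. First I would set up notation: write $\cal X(m_+) = (i_t,k_t)_{1\leq t\leq T}$ in the standard order, and argue by induction on $T$. The base case $T\leq 1$ is trivial since a single point (or empty sequence) is always a snake. For the inductive step, suppose for contradiction that $(i_{t},k_{t})$ fails to be in snake position to $(i_{t-1},k_{t-1})$ for some $t$; by passing to a sub-tensor-factor argument via Lemma \ref{l:simpletame} (a tame module restricted to, or written as a tensor product involving, a ``smaller'' piece stays tame) I would reduce to the case $T=2$, i.e.\ $m_+ = Y_{i_1,k_1}Y_{i_2,k_2}$ with the two points \emph{not} in snake position, and the goal becomes to exhibit a generalized eigenspace of $\uqh h$ of dimension $\geq 2$, or equivalently a non-semisimple action of some $\phi_i^\pm(u)$.

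The key mechanism I would use is the $\lie{sl}_2$-reduction of Lemma \ref{thin sl2} and Lemma \ref{sl2 monomials}, together with the restriction functor $\res_J$ and its $q$-character refinement $\tau_J$. Concretely: if $(i_1,k_1)$ and $(i_2,k_2)$ are not in snake position, then either $k_2 - k_1 < 2 + |i_1 - i_2|$ with $k_2-k_1$ having the right parity to still lie in $\cal X$, or the two indices coincide and the $q$-strings overlap. In the equal-index case $i_1=i_2=j$, restricting to $U_q(\hlie g_j)\cong U_q(\hlie{sl}_2)$ and using $\beta_j$, the monomial $\beta_j(m_+) = Y_{j,k_1}Y_{j,k_2}$ corresponds to two $q$-strings that are not in general position / not disjoint, so by Lemma \ref{thin sl2} the $\lie{sl}_2$-module is not thin, hence not tame; then Lemma \ref{r:tameresfactor} (restriction of tame is tame) gives the contradiction. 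In the unequal-index case I would instead produce, inside $\cal M(L(m_+))$, two monomials $m, m'$ with the same $\wt$ lying in the same $\phi$-eigenvalue class but forced to be non-diagonal: the standard trick is to find a Kirillov--Reshetikhin-type or fundamental sub-factor, or to use Lemma \ref{sl2 char} / Lemma \ref{hws lemma} to locate a highest-$\ell$-weight vector for some $U_q(\hlie g_i)$ and then track the $\lie{sl}_2$-character forced to appear; when the snake condition $k_2-k_1\geq 2+|i_1-i_2|$ fails, the overlapping pattern of the $A_{i,a}^{-1}$ shifts creates a repeated $\ell$-weight (equivalently, a violation of the thinness criterion $u_{1,b}(m)-u_{1,bq^2}(m)\neq 2$ after an appropriate $\beta$), which for a simple module is known to force a Jordan block.

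The main obstacle, and the step I would spend the most care on, is the unequal-index non-snake case: ruling out tameness there is not a pure $\lie{sl}_2$ statement, because two adjacent nodes genuinely interact. The cleanest route is probably to invoke the structure already present in \cite{NT98} for $\lie{sl}_N$ — namely that $L(Y_{i,a}Y_{j,b})$ with $|i-j|=1$ and the ``wrong'' spectral shift contains a two-dimensional weight space on which the Cartan currents act with a nontrivial Jordan block — and then propagate this to the general $T$ via the tensor-factor reduction. Thus the proof is: (1) reduce to $T=2$ using Lemma \ref{l:simpletame} and Lemma \ref{r:tameresfactor}; (2) split into $i_1=i_2$ and $|i_1-i_2|\geq 1$; (3) handle $i_1=i_2$ by the $\lie{sl}_2$ result; (4) handle the adjacent / general unequal case by locating the forbidden $\lie{sl}_2$-subquotient inside a restriction to a two-node subdiagram and comparing $q$-characters with Lemma \ref{sl2 char}; (5) conclude by induction that every consecutive pair in $\cal X(m_+)$ is in snake position. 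I expect the delicate bookkeeping to be in verifying that the failure of the inequality $k'-k\geq 2+|i'-i|$ always lands one in a genuinely non-thin (hence non-tame, by Lemma \ref{thin sl2} applied to a subquotient) situation and never in an accidentally still-thin one.
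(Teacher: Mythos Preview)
Your ``if'' direction and the opening restriction idea are fine, but the induction you set up has a genuine gap. You induct on $T$ and claim to reduce to $T=2$ via Lemma~\ref{l:simpletame}. That lemma goes the wrong way: it says that if a \emph{tensor product} is tame then each factor is tame. But $L(m_+)$ with $T$ points is not, in general, a tensor product of (or even contains as a subfactor) the two-point module $L(Y_{i_t,k_t}Y_{i_{t+1},k_{t+1}})$; indeed, precisely when the snake condition fails there is no such splitting. Neither ``sub-tensor-factor'' nor restriction to a subalgebra produces the needed two-point simple module as a tame subfactor, so the reduction to $T=2$ is unjustified. And even granting $T=2$, your unequal-index case is not proved: invoking \cite{NT98} is circular, and ``restriction to a two-node subdiagram'' fails when $|i_1-i_2|>1$, since the nodes become disconnected and the interaction disappears.

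The paper's argument is organized differently and avoids this problem: it inducts on the rank $N$, not on $T$. One restricts the tame module $L(m_+)$ to the two subalgebras $U_q(\hlie g_J)$ and $U_q(\hlie g_K)$ with $J=\{1,\dots,N-1\}$ and $K=\{2,\dots,N\}$, each isomorphic to $U_q(\hlie{sl}_{N-1})$. By Lemma~\ref{r:tameresfactor} these restrictions are tame, so by the inductive hypothesis the snake inequality $k_{t+1}-k_t\ge 2+|i_{t+1}-i_t|$ holds whenever $N\notin\{i_t,i_{t+1}\}$ and whenever $1\notin\{i_t,i_{t+1}\}$. The only surviving case is $\{i_t,i_{t+1}\}=\{1,N\}$: here one applies a single lowering move $m=m_+A_{i_t,k_t+1}^{-1}$ (Lemma~\ref{sl2 char}), and by Lemma~\ref{hws lemma} the resulting $J$- (or $K$-) dominant monomial $\beta_J(m)$ has two consecutive points violating the rank-$(N-1)$ snake condition, contradicting the inductive hypothesis. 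No reduction in $T$ is ever needed.
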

\begin{proof} The \emph{if} part follows from Theorem \ref{snakethin}. For the \emph{only if} part we proceed by induction on $N$. The case of $N=2$ follows from Lemma \ref{thin sl2}. 

Write $m_+ = \prod_{t=1}^T Y_{i_t,k_t}$, where $T\in \Z_{\geq 0}$,  $(i_t,k_t)\in \cal X$, $1\leq t\leq T$.
For brevity, write $V= L(m_+)$. Let $J=\{1,2,\ldots, N-1\}$ and $K=\{2,3,\ldots, N\}$ be subsets of $I$.
By Lemma \ref{r:tameresfactor}, the $U_q(\hlie g_J)$-module $L(\beta_J(m_+))$ is tame. Note that $U_q(\hlie g_J)$ is isomorphic to $U_q(\hat{\mathfrak{sl}}_{N-1})$. Therefore by the induction hypothesis, 
$$k_{t+1} - k_t \geq 2+ |i_{t+1} -i_t|,$$
whenever $N\notin \{i_t,i_{t+1}\}$. 

Similar arguments applied to $\res_K V$ show that 
$$k_{t+1} - k_t \geq 2 + |i_{t+1}-i_t|,$$
whenever $1\notin\{i_t, i_{t+1}\}$.

Therefore it remains to consider the case of $\{i_t, i_{t+1}\} = \{1,N\}$. Suppose by contradiction that
$0\leq k_{t+1}-k_t < 2+|N-1|$. By definition of the set $\cal X$ it is equivalent to $k_{t+1}-k_{t} \leq N-1$. By Lemma \ref{sl2 char}, we have
$$m =m_+ A_{i_{t}, k_{t}+1}^{-1}\in \chi_q(V).$$
Explicitly, 
$$m = \left\{\begin{array}{ll}
m_+ Y_{1,k_t}^{-1}Y_{1,k_t+2}^{-1}Y_{2, k_t+1} & {\rm if}\ i_t=1 \ {\rm and}\ i_{t+1}= N,\\
m_+ Y_{N,k_t}^{-1}Y_{N,k_t+2}^{-1}Y_{N-1, k_t+1} & {\rm if}\ i_t=N \ {\rm and}\ i_{t+1}= 1.\\
\end{array}\right.
$$
In the first case, by Lemma \ref{hws lemma}, $L(\beta_J(m))$ is a subfactor of $\res_JV$.
However,  
$$k_{t+1} - (k_{t}+1)\leq (N-1)-1 = |N-2|,$$
yields a contradiction with the inductive hypothesis. 
A similar argument proves that the second case cannot hold either.
\end{proof}
Since all simple tame $U_q(\hlie {sl}_N)$-modules are special, the following is immediate from the previous theorem and Lemma \ref{r:tameresfactor}.
 
\begin{cor}\label{c:tameslN}
Let $V$ be a tame $U_q(\hlie{sl}_N)$-module. Then, for each dominant monomial $m\in \cal M(V)$, $L(m)$ is a subfactor of $V$.\qed
\end{cor}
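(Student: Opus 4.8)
The plan is to deduce this directly from Theorem \ref{NT} together with the special-ness statement of Theorem \ref{snakethin}. First I would reduce to the case where the $q$-character of $V$ lies in a single subring $\Z[Y_{i,cq^k}^{\pm1}]_{(i,k)\in\cal X}$ via the tensor product decomposition $V=\bigotimes_{a}L(m_a)$ recalled before \eqref{e:qcharX}; since restriction to diagram subalgebras and passage to subfactors both preserve tameness (Lemma \ref{r:tameresfactor}), and since a subfactor of a tensor product is controlled by subfactors of the tensor factors, it suffices to treat each $L(m_a)$ separately. So assume $V=L(m_+)$ with $m_+\in\cal P_{\cal X}^+$ tame; by Theorem \ref{NT}, $\cal X(m_+)$ is a snake, and by Theorem \ref{snakethin}, $L(m_+)$ is thin and \emph{special}, so the only dominant monomial in $\cal M(V)$ is $m_+$ itself, and $L(m_+)=L(m)$ trivially is a subfactor of $V$.

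Next I would handle a general tame $U_q(\hlie{sl}_N)$-module $V$ (not necessarily simple). The key point is that a dominant monomial $m\in\cal M(V)$ gives rise, in the associated graded of a composition series of $V$, to a highest $\ell$-weight vector: since $m$ is dominant and maximal among the monomials sitting above it in at least one Jordan block, one can choose a composition factor $L(m')$ with $m\in\cal M(L(m'))$ and $m'\geq m$ dominant. By the previous paragraph $L(m')$ is special, hence its unique dominant monomial is $m'$ itself; but $m\in\cal M(L(m'))$ is dominant, forcing $m=m'$. Therefore $L(m)$ occurs as a composition factor, i.e. a subfactor, of $V$.

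The only genuine subtlety — which I expect to be the main obstacle to write cleanly — is the combinatorial/linear-algebra argument that every dominant monomial of an arbitrary finite-dimensional (not simple) tame module is actually the highest $\ell$-weight of some composition factor. This uses that in a composition series $0=V_0\subset V_1\subset\cdots\subset V_n=V$ with $V_j/V_{j-1}\cong L(m^{(j)})$, one has $\cal M(V)=\bigcup_j\cal M(L(m^{(j)}))$, and that a dominant monomial $m$ appearing in some $\cal M(L(m^{(j)}))$ must, by \eqref{mon set} applied to that factor, satisfy $m\leq m^{(j)}$ with $m^{(j)}$ dominant; specialness of $L(m^{(j)})$ (available since $L(m^{(j)})$ is itself a tame $U_q(\hlie{sl}_N)$-module, being a subfactor of $V$, hence a snake module by Theorem \ref{NT}) then collapses $m=m^{(j)}$. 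Everything else is routine bookkeeping with the partial order on $\cal P$ and the compatibility of $\chi_q$ with short exact sequences.
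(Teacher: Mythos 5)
Your proof is correct and follows the paper's intended route: pass to a composition series, note that each factor is tame by Lemma \ref{r:tameresfactor} and hence special by Theorems \ref{NT} and \ref{snakethin}, so a dominant monomial $m\in\cal M(V)$ must equal the highest $\ell$-weight $m^{(j)}$ of some composition factor. Your first paragraph's tensor-product reduction is the correct way to extend specialness from $m_+\in\cal P_{\cal X}^+$ to general $m_+$, though it would read more cleanly if invoked once at the point where you apply Theorem \ref{NT} to the composition factors $L(m^{(j)})$ (which need not lie in a single subring).
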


\section{Extended snake modules}\label{s:extsnake}

Assume $\g$ to be of type $B_N$.
In this section we recall the definition of snakes for type $B$ and extend such definition to include all cases where the $q$-character formula of Theorem 6.1 in \cite{MY12a} pertains, see Theorem \ref{mainthm1} and Remark \ref{extended def rem}. For the remainder of the paper $\delta_{ij}$ will donote the Kronecker delta.

\subsection{Definition}

Let $(i,k)\in \cal X$. Let us say that a point $(i',k')\in \cal X$ is in \emph{extended snake position} with respect to $(i,k)$ if at least one of the following conditions is true.
\begin{enumerate}
\item $k'-k \geq 4+ 2|i'-i| - \delta_{Ni} - \delta_{Ni'}$ \quad and\quad $k'-k \equiv 2(i' -i) - \delta_{Ni} - \delta_{Ni'}\mod 4$, 
\item $k' -k\geq 2N + 2 + 2|N-i-i'|- \delta_{Ni} - \delta_{Ni'}$, 
\end{enumerate}
A point $(i',k')\in \cal X$ is in \emph{snake position} with respect to $(i,k)$ if the condition (i) holds, see \cite{MY12a}.
A point $(i',k')\in \cal X$ is said to be in \emph{minimal snake position} to $(i,k)$ if $k'-k$ is equal to the lower bound in (i).

Following \cite{MY12a}, we draw the images of points in $\cal X$ under the injective map $\iota: \cal X \rightarrow \Z\times \Z$ defined as follows
\begin{equation}
\iota: (i,k)\mapsto \left\{\begin{array}{ll}
(2i, k) & i<N \ {\rm and}\ 2N+k-2i \equiv 2 \mod 4,\\
(4N-2-2i, k) & i<N \ {\rm and }\ 2N+k-2i \equiv 0 \mod 4,\\
(2N-1, k) & i=N.
\end{array}\right.
\end{equation}

\begin{figure} \caption{\label{snakeposfig} Each of the points $\scriptscriptstyle\blacklozenge$, $\scriptscriptstyle\square$ and $\scriptscriptstyle\blacksquare$ are, respectively, in extended snake, snake and minimal snake position to the point marked $\circ$.}
\begin{equation*}
\begin{tikzpicture}[scale=.3,yscale=-1]
\draw[help lines] (0,0) grid (18,23);
\draw (2,-1) -- (8,-1); \draw (10,-1) -- (16,-1);
\draw[double,->] (8,-1) -- (8.8,-1); \draw[double,->] (10,-1) -- (9.2,-1);
\filldraw[fill=white] (9,-1) circle (2mm) node[above=1mm] {$\scriptstyle 5$};
\foreach \x in {1,2,3,4} {
\filldraw[fill=white] (2*\x,-1) circle (2mm) node[above=1mm] {$\scriptstyle\x$}; 
\filldraw[fill=white] (2*9-2*\x,-1) circle (2mm) node[above=1mm] {$\scriptstyle\x$}; }
\foreach \y in {0,2,4,6,8,10,12,14,16,18,20,22} {\node at (-1,\y) {$\scriptstyle\y$};}
\draw[thick] (6,2) circle (2mm);
\foreach \x/\y in {2/10,2/14,2/18,2/22,4/8,6/6,6/10,8/8,8/12,9/9,9/13,4/12,4/16, 4/20,6/14,6/18,6/22,8/16,8/20,9/17,9/21} 
{\node[regular polygon, regular polygon sides=4,draw,fill=white,inner sep=.3mm] at (\x,\y) {};}
\foreach \x/\y in {2/10,4/8,6/6,8/8,9/9} 
{\node[regular polygon, regular polygon sides=4,draw,fill=black,inner sep=.3mm] at (\x,\y) {};}
\foreach \x/\y in {16/20, 16/16, 14/18, 14/14,14/22, 12/20, 12/16, 10/22, 10/18, 9/23, 9/19}
{\node[shape border rotate = 45, regular polygon, regular polygon sides = 4, draw, fill = black, inner sep = .32mm] at (\x,\y) {};}
\end{tikzpicture}
\begin{tikzpicture}[scale=.3,yscale=-1]
\draw[help lines] (0,0) grid (18,23);
\draw (2,-1) -- (8,-1); \draw (10,-1) -- (16,-1);
\draw[double,->] (8,-1) -- (8.8,-1); \draw[double,->] (10,-1) -- (9.2,-1);
\filldraw[fill=white] (9,-1) circle (2mm) node[above=1mm] {$\scriptstyle 5$};
\foreach \x in {1,2,3,4} {
\filldraw[fill=white] (2*\x,-1) circle (2mm) node[above=1mm] {$\scriptstyle\x$}; 
\filldraw[fill=white] (2*9-2*\x,-1) circle (2mm) node[above=1mm] {$\scriptstyle\x$}; }
\foreach \y in {0,2,4,6,8,10,12,14,16,18,20,22} {\node at (-1,\y) {$\scriptstyle\y$};}
\draw[thick] (9,1) circle (2mm);
\foreach \x/\y in {9/3,9/7,9/11,9/15,9/19, 9/23, 10/6,10/10,10/14,10/18,10/22, 12/8, 12/12,12/16, 12/20, 14/10, 14/14, 14/18,14/22, 16/12,16/16, 16/20} 
{\node[regular polygon, regular polygon sides=4,draw,fill=white,inner sep=.3mm] at (\x,\y) {};}
\foreach \x/\y in {9/3, 10/6, 12/8, 14/10, 16/12} 
{\node[regular polygon, regular polygon sides=4,draw,fill=black,inner sep=.3mm] at (\x,\y) {};}
\foreach \x/\y in {9/21, 8/20, 6/18, 6/22, 4/16, 4/20, 2/14, 2/18, 2/22}
{\node[shape border rotate = 45, regular polygon, regular polygon sides = 4, draw, fill = black, inner sep = .32mm] at (\x,\y) {};}
\end{tikzpicture}
\end{equation*}
\end{figure}
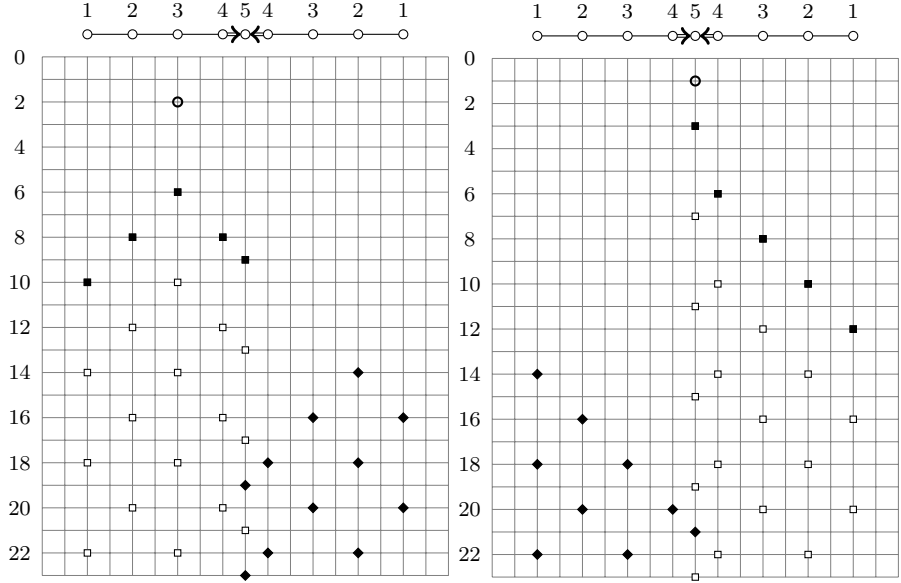

Let $(i_t,k_t)$, $1\leq t\leq T$, $T\in \Z_{\geq 1}$, be a sequence of points in $\cal X$. The sequence $(i_t,k_t)_{1\leq t\leq T}$ is said to be an \emph{extended snake} (resp. \emph{snake}) if $(i_t,k_t)$ is in extended snake position (resp. snake position) to $(i_{t-1},k_{t-1})$ for all $2\leq t\leq T$. We call the simple module $L(m)$ an \emph{extended snake module} (resp. \emph{snake module}) if $\cal X(m)$ is an extended snake (resp. snake).

Recall that the {\it minimal snake} is the snake such that $(i_t,k_t)$ is in minimal snake position to $(i_{t-1},k_{t-1})$ for all $2\leq t\leq T$ and the corresponding simple module $L(m)$ is called \emph{minimal snake module}.
Recall further that the {\it minimal affinizations} are minimal snake modules, where the sequence of $i_t$, $t=1,\dots,T$, is monotone. Finally, recall that the
 {\it Kirillov-Reshetikhin modules} are minimal affinizations where the sequence $i_t$, $t=1,\dots,T$, is constant. Therefore
we have the following families of representations, in order of increasing generality:
\begin{center} \text{KR modules $\subset$ minimal affinizations $\subset$ minimal snakes $\subset$ snakes $\subset$ extended snakes.}
\end{center}

\begin{rem}\label{not transitive} The extended snake position is not a transitive concept. Namely, if $(i',k')$ is in extended snake position to $(i,k)$ and $(i'',k'')$ is in extended snake position to $(i',k')$,  then it does not follow in general that $(i'',k'')$ is in  snake position to $(i,k)$. However, in the cases when not, we necessarily have $i'=N$, $i+i''>N-2$, $(i',k')$ is in snake position to $(i,k)$ and $(i'',k'')$ is in snake position to $(i',k')$. \end{rem}

\subsection{Paths, corners, raising and lowering moves}

A \emph{path} is a finite sequence of points in the plane $\R^2$. We write $(i,k)\in p$ if $(i,k)$ is a point of the path $p$. In our diagrams, we connect consecutive points of the path by line segments, for illustrative purposes only. For each $(i,k)\in \cal X$ we define a set $\scr P_{i,k}$ of paths. Pick and fix an $\epsilon$, $1/2>\epsilon>0$. Define $\scr P_{N,k}$ for all $k\in 2\Z +1$ in the following way
\begin{itemize}
\item For all $k\equiv 3 \mod 4$,
\begin{align*}
\scr P_{N,k} & :=\{\left((0,y_0), (2,y_1), \ldots, (2N-4, y_{N-2}), (2N-2, y_{N-1}),(2N-1, y_N)\right)|\\
& \qquad y_0 = k+2N-1, y_{j+1}-y_j \in \{2,-2\} \ \forall\  0\leq j\leq N-2 \\
& \qquad {\rm and}\ y_N - y_{N-1} \in \{1+\epsilon, -1 -\epsilon\}\}.
\end{align*}
\item For all $k\equiv 1 \mod 4$,
\begin{align*}
\scr P_{N,k} & :=\{\left((4N-2,y_0), (4N-4,y_1), \ldots, (2N+2, y_{N-2}), (2N, y_{N-1}),(2N-1, y_N)\right)|\\
& \qquad y_0 = k+2N-1, y_{j+1}-y_j \in \{2,-2\} \ \forall\ 0\leq j\leq N-2 \\
& \qquad {\rm and}\ y_N - y_{N-1} \in \{1+\epsilon, -1 -\epsilon\}\}.
\end{align*}
\end{itemize}
Next, $\scr P_{i,k}$ is defined, for all $(i,k)\in \cal X$, as follows.
\begin{align*}
\scr P_{i,k} & :=\{\left(a_0,a_1,\ldots, a_N,\ol{a_{N}}, \ldots, \ol{a_1},\ol{a_0}\right)|\\
& \qquad(a_0,a_1\ldots, a_N)\in \scr P_{N,k-(2N-2i-1)}, (\ol{a_0}, \ol{a_1}, \ldots, \ol{a_N})\in \scr P_{N,k+(2N-2i-1)},\\
& \qquad {\rm and}\ a_N - \ol{a_N} = (0,y) \ {\rm where}\ y>0\}.
\end{align*}
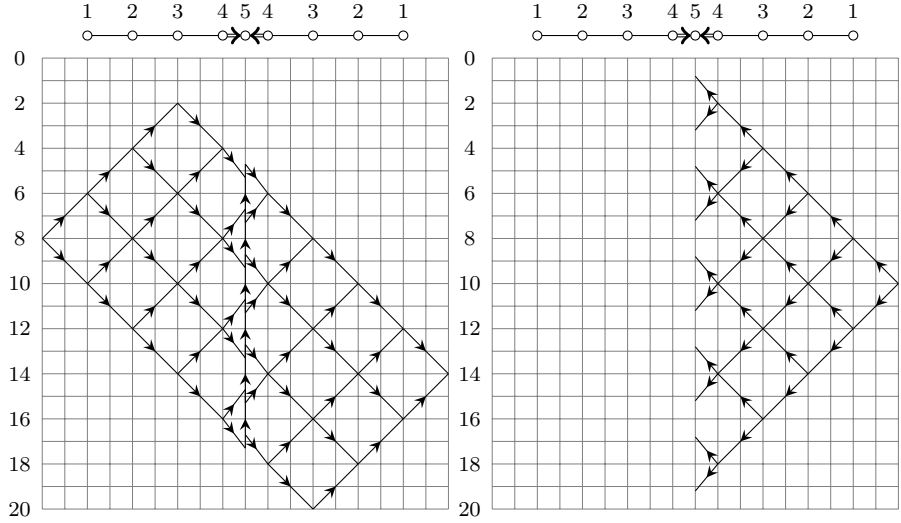
\begin{figure} \caption{\label{pathsfig} In type $B_5$ illustration of the paths in $\scr P_{3,2}$ (left) and $\scr P_{5,1}$ (right).}
\begin{equation*}
\begin{tikzpicture}[scale=.3,yscale=-1,decoration={
markings,
mark=at position .5 with {\arrow[black,line width=.3mm]{stealth}}}]
\draw[help lines] (0,0) grid (18,20);
\draw (2,-1) -- (8,-1); \draw (10,-1) -- (16,-1);
\draw[double,->] (8,-1) -- (8.8,-1); \draw[double,->] (10,-1) -- (9.2,-1);
\filldraw[fill=white] (9,-1) circle (2mm) node[above=1mm] {$\scriptstyle 5$};
\foreach \x in {1,2,3,4} {
\filldraw[fill=white] (2*\x,-1) circle (2mm) node[above=1mm] {$\scriptstyle\x$}; 
\filldraw[fill=white] (2*9-2*\x,-1) circle (2mm) node[above=1mm] {$\scriptstyle\x$}; }
\foreach \y in {0,2,4,6,8,10,12,14,16,18,20} {\node at (-1,\y) {$\scriptstyle\y$};}
\foreach \x/\y in {0/8,2/6, 2/10,4/4, 6/6, 6/10, 6/14, 4/8, 4/12, 10/10, 10/14,10/18,12/12, 12/16, 14/14} {\draw[postaction={decorate}] (\x,\y) -- ++(2,2); \draw (\x,\y)[postaction={decorate}] --++(2,-2);}
\foreach \x/\y in {6/2, 10/6, 12/8, 14/10, 16/12} {\draw[postaction={decorate}] (\x,\y) -- ++(2,2);}
\foreach \x/\y in {12/20, 14/18, 16/16} {\draw[postaction={decorate}] (\x,\y) -- ++(2,-2);}
\foreach \x/\y in {8/4, 8/8, 8/12, 8/16} {\draw[postaction={decorate}] (\x,\y) -- ++(1,1.3);
\draw[postaction={decorate}] (\x,\y)++(1,0.7) -- ++(1,1.3); }
\foreach \x/\y in {8/8, 8/12, 8/16} {\draw[postaction={decorate}] (\x,\y) -- ++(1,-1.3);
\draw[postaction={decorate}] (\x,\y)++(1,-0.7) -- ++(1,-1.3); }
\foreach \x/\y in {9/17, 9/15, 9/13, 9/11, 9/9, 9/7} {\draw[postaction={decorate}] (\x,\y)++(0,.3) -- ++(0,-2.6); }
\end{tikzpicture}
\begin{tikzpicture}[scale=.3,yscale=-1,decoration={
markings,
mark=at position .5 with {\arrow[black,line width=.3mm]{stealth}}}
]
\draw[help lines] (0,0) grid (18,20);
\draw (2,-1) -- (8,-1); \draw (10,-1) -- (16,-1);
\draw[double,->] (8,-1) -- (8.8,-1); \draw[double,->] (10,-1) -- (9.2,-1);
\filldraw[fill=white] (9,-1) circle (2mm) node[above=1mm] {$\scriptstyle 5$};
\foreach \x in {1,2,3,4} {
\filldraw[fill=white] (2*\x,-1) circle (2mm) node[above=1mm] {$\scriptstyle\x$}; 
\filldraw[fill=white] (2*9-2*\x,-1) circle (2mm) node[above=1mm] {$\scriptstyle\x$}; }
\foreach \y in {0,2,4,6,8,10,12,14,16,18,20} {\node at (-1,\y) {$\scriptstyle\y$};}
\foreach \x/\y in {18/10, 16/8, 16/12, 14/10, 14/6, 14/14, 12/4, 12/8, 12/12, 12/16} {\draw[postaction={decorate}] (\x,\y) -- ++(-2,2); \draw (\x,\y)[postaction={decorate}] --++(-2,-2);}
\foreach \y in {2,6,10,14,18}  {\draw[postaction={decorate}] (10,\y) -- ++(-1,1.2); \draw[postaction={decorate}] (10,\y) --++(-1,-1.2);}
\end{tikzpicture}
\end{equation*}
\end{figure}

For all $(i,k)\in \cal X$, we define the sets of upper and lower \emph{corners} $C_{p,\pm}$ of a path $p = ((j_r,l_r))\in \scr P_{i,k}$ as follows:
\begin{align*}
C_{p,+} & :=\iota^{-1}\{(j_r,l_r)\in p | j_r\notin \{0,2N-1,4N-2\}, \ l_{r-1}>l_r,\ l_{r+1}>l_r\}\\
& \qquad \sqcup \{(N,l)\in \cal X | (2N-1, l-\epsilon)\in p \ {\rm and}\ (2N-1,l+\epsilon)\notin p\},
\end{align*}
\begin{align*}
C_{p,-} & :=\iota^{-1}\{(j_r,l_r)\in p | j_r\notin \{0,2N-1,4N-2\}, \ l_{r-1}<l_r,\ l_{r+1}<l_r\}\\
& \qquad \sqcup \{(N,l)\in \cal X | (2N-1, l+\epsilon)\in p \ {\rm and}\ (2N-1,l-\epsilon)\notin p\}.
\end{align*}

For all $(i,k)\in \cal X$ we define the \emph{highest path} of $\scr P_{i,k}$, denoted by $p_{i,k}^+$, to be the unique element of $\scr P_{i,k}$ with no lower corners. Analogously, we define $p_{i,k}^-$, the \emph{lowest path}, to be the unique element of $\scr P_{i,k}$ with no upper corners. 

We define a map $\m$ sending paths to monomials, as follows:
\begin{equation}
\begin{array}{rrcl}
\m:& \displaystyle{\bigsqcup_{(i,k)\in \cal X}\scr P_{i,k}} & \rightarrow& \Z\left[Y_{j,l}^{\pm 1}\right]_{(j,l)\in \cal X}\\
& p&\mapsto& \m(p):= \displaystyle{\prod_{(j,l)\in C_{p,+}}Y_{j,l}\prod_{(j,l)\in C_{p,-}}Y_{j,l}^{-1}}.
\end{array}
\label{monfunction}
\end{equation}

Observe that, for each $(i,k)\in \cal X$ and $p\in \scr P_{i,k}$, $\m(p) \in \cal P^+$ (resp. $\m(p)^{-1}\in \cal P^+$) if and only if $p =p_{i,k}^+$ (resp. $p = p_{i,k}^{-}$).

Let $(i,k)\in \cal X$ and $(j,l)\in \cal W$. We say a path $p\in \scr P_{i,k}$ can be \emph{lowered} at $(j,l)$ if $(j,l-r_j)\in C_{p,+}$ and $(j,l+r_j)\notin C_{p,+}$. If $(i,k)$ can be lowered at $(j,l)$ we define a \emph{lowering move} at $(j,l)$, resulting in another path in $\scr P_{i,k}$ which we write as $p\scr A_{j,l}^{-1}$ and which is defined to be the unique path such that $\m(p\scr A_{j,l}^{-1}) = \m(p)A_{j,l}^{-1}$. A detailed case-by-case description of these moves can be found in \cite{MY12a}, Section 5.

\subsection{Non-overlapping paths}

A path $p$ is said to be \emph{strictly above} a path $p'$ if 
$$(x,y)\in p\quad {\rm and} \quad (x,z)\in p' \Rightarrow y<z.$$ If a path $p$ is strictly above a path $p'$, then we also say
$p'$ \emph{strictly bellow} $p$.
A $T$-tuple of paths $(p_1,\ldots, p_T)$ is said \emph{non-overlapping} if $p_s$ is strictly above $p_t$ for all $s<t$. Otherwise, for some $s<t$ there exist $(x,y)\in p_s$ and $(x,z)\in p_t$ such that $y\geq z$, and we say $p_s$ \emph{overlaps} $p_t$ in column $x$, see Figure 3.

\begin{figure}\caption{Illustration of the definition of overlapping paths in type $B_3$. By Theorem \ref{mainthm1} bellow, $L(Y_{3,1}Y_{3,3})$ contains the monomial $(Y_{1,4}Y_{3,7}^{-1})(Y_{3,9}^{-1}Y_{2,8}Y_{1,10}^{-1})$ (left) but {\emph not} the monomial $Y_{1,8}^{-1}Y_{2,6}Y_{2,8}^{-1}Y_{1,6} = (Y_{1,8}^{-1}Y_{2,6}Y_{3,7}^{-1})(Y_{3,7}Y_{2,8}^{-1}Y_{1,6})$(right).}
\begin{equation*}
\begin{tikzpicture}[scale=.4,yscale=-1,decoration={
markings,
mark=at position .5 with {\arrow[black,line width=.3mm]{stealth}}}]
\draw[help lines] (0,0) grid (10,12);
\draw (2,-1) -- (4,-1); \draw (6,-1) -- (8,-1);
\draw[double,->] (4,-1) -- (4.8,-1); \draw[double,->] (6,-1) -- (5.2,-1);
\filldraw[fill=white] (5,-1) circle (2mm) node[above=1mm] {$\scriptstyle 3$};
\foreach \x in {1,2} {
\filldraw[fill=white] (2*\x,-1) circle (2mm) node[above=1mm] {$\scriptstyle\x$}; 
\filldraw[fill=white] (2*5-2*\x,-1) circle (2mm) node[above=1mm] {$\scriptstyle\x$}; }
\foreach \y in {0,2,4,6,8,10} {\node at (-1,\y) {$\scriptstyle\y$};}
\begin{scope}[every node/.style={minimum size=.1cm,inner sep=0mm,fill,circle}]
\draw[thick]  (0,6)node {}  -- (2,4)node {}  -- (4,6)node {} -- (5,7.2)node{};
 \draw[thick] (5, 9.2)node{} -- (6, 8)node{} -- (8, 10)node{} -- (10, 8)node{};
\end{scope}
\end{tikzpicture}
\begin{tikzpicture}[scale=.4,yscale=-1,decoration={
markings,
mark=at position .5 with {\arrow[black,line width=.3mm]{stealth}}}]
\draw[help lines] (0,0) grid (10,12);
\draw (2,-1) -- (4,-1); \draw (6,-1) -- (8,-1);
\draw[double,->] (4,-1) -- (4.8,-1); \draw[double,->] (6,-1) -- (5.2,-1);
\filldraw[fill=white] (5,-1) circle (2mm) node[above=1mm] {$\scriptstyle 3$};
\foreach \x in {1,2} {
\filldraw[fill=white] (2*\x,-1) circle (2mm) node[above=1mm] {$\scriptstyle\x$}; 
\filldraw[fill=white] (2*5-2*\x,-1) circle (2mm) node[above=1mm] {$\scriptstyle\x$}; }
\foreach \y in {0,2,4,6,8,10} {\node at (-1,\y) {$\scriptstyle\y$};}
\begin{scope}[every node/.style={minimum size=.1cm,inner sep=0mm,fill,circle}]
\draw[thick]  (0,6)node {}  -- (2,8)node {}  -- (4,6)node {} -- (5,7.2)node{};
 \draw[thick] (5, 6.8)node{} -- (6, 8)node{} -- (8, 6)node{} -- (10, 8)node{};
\end{scope}
\end{tikzpicture}
\end{equation*}
\label{overlapfig}
\end{figure}
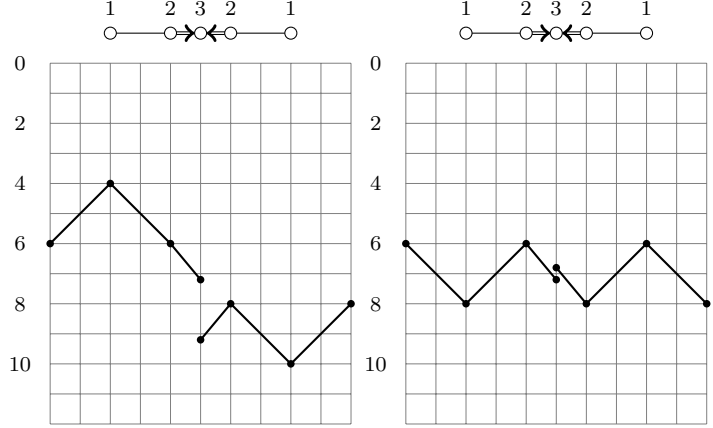

Let $(i_t,k_t)\in \cal X, 1\leq t\leq T$, $T\in\Z_{\geq 1}$, be an extended snake. Define
$$\nops := \{\ps | p_{t}\in \scr P_{i_t,k_t}, 1\leq t \leq T, \ps \ \textrm{is non-overlapping}\}.$$

\begin{lemma} If $(i_t,k_t)\in \cal X, 1\leq t\leq T$, $T\in \Z_{\geq 1}$, is an extended snake, then
$(p_{i_1,k_1}^+,\dots, p_{i_T,k_T}^+)\in\nops$ and $(p_{i_1,k_1}^-,\dots, p_{i_T,k_T}^-)\in\nops$.
\end{lemma}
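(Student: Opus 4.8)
The plan is to show that the two extreme tuples, built from the highest (resp.\ lowest) paths in each $\scr P_{i_t,k_t}$, are non-overlapping, i.e.\ $p_{i_s,k_s}^+$ is strictly above $p_{i_t,k_t}^+$ whenever $s<t$ (and dually for the lowest paths). By the dual symmetry of the definitions of $\scr P_{N,k}$ (reflecting $y_{j+1}-y_j$ and the $\pm(1+\epsilon)$ choice), it suffices to treat the case of the highest paths; the lowest-path statement then follows by applying the argument to the reversed snake read in the opposite direction, or equivalently by the reflection $y\mapsto -y$.

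First I would write down explicitly the highest path $p_{i,k}^+$ via $\iota$: it is the unique path in $\scr P_{i,k}$ all of whose interior steps $y_{j+1}-y_j$ are $+2$ going out and $-2$ coming back (so that there are no lower corners), with the single upper corner sitting at the ``bottom'' $(N,\cdot)$-vertex. Concretely this lets me compute, for each column $x$ in $\{0,1,\dots,4N-2\}$, the exact $y$-coordinate $y^+_{i,k}(x)$ of the point of $p_{i,k}^+$ in that column, as an explicit affine function of $k$, $i$ and $x$ (with a mild case split according to the parity class of $2N+k-2i \bmod 4$, since that governs whether the outgoing leg of the path lives in columns $\le 2N-1$ or $\ge 2N-1$, and a correction of size $\epsilon$ near the central column). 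Since $p^+$ is ``as high as possible'', for fixed column $x$ the coordinate $y^+_{i,k}(x)$ is the \emph{minimum} of $\{\,y : (x,y)\in p\,\}$ over all $p\in\scr P_{i,k}$; dually $y^-_{i,k}(x)$ is the maximum.

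Then the core of the proof is a direct inequality: for $s<t$ and every column $x$,
\[
y^+_{i_s,k_s}(x) \;<\; y^+_{i_t,k_t}(x).
\]
By induction on the length of the snake it is enough to do this for $t=s+1$, i.e.\ when $(i_t,k_t)$ is in extended snake position to $(i_s,k_s)$. I would feed in the two defining inequalities of extended snake position, namely $k_t-k_s\ge 4+2|i_t-i_s|-\delta_{Ni_s}-\delta_{Ni_t}$ (with the congruence condition) in case (i), and $k_t-k_s\ge 2N+2+2|N-i_s-i_t|-\delta_{Ni_s}-\delta_{Ni_t}$ in case (ii); the congruence in case (i) is exactly what guarantees that the two highest paths have compatible parities so that the vertical gap $y^+_{i_t,k_t}(x)-y^+_{i_s,k_s}(x)$ is a constant independent of $x$, and then one checks this constant is $>0$. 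In case (ii) the paths may live in ``opposite halves'' of the picture and the gap varies with $x$; here one minimizes the gap over $x$ and uses the larger lower bound $2N+2+\cdots$ on $k_t-k_s$. (This mirrors the worst-case bookkeeping already carried out for ordinary snakes in \cite[Section~6]{MY12a}; the only genuinely new input is case (ii) and the boundary corrections when $i_s$ or $i_t$ equals $N$, which shift several of the relevant coordinates by $2N-2i-1$ and by $\pm\epsilon$.)

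The main obstacle I anticipate is purely combinatorial stamina rather than conceptual difficulty: keeping the $\iota$-coordinate formulas, the parity cases, and the $\epsilon$-shifts near the central column straight, and verifying that in \emph{every} combination of (which of $i_s,i_t$ is $N$) $\times$ (case (i) or (ii)) $\times$ (parity class) the minimal vertical gap across all columns is strictly positive. A clean way to organize this is to observe that the extreme paths $p^+$ and $p^-$ are themselves obtained by the procedure that builds the highest/lowest monomial of the corresponding snake module, so the inequality $y^+_{i_s,k_s}(x)<y^+_{i_t,k_t}(x)$ is equivalent to a statement about the supports of the monomials $\m(p^+_{i_s,k_s})$ and $\m(p^+_{i_t,k_t})$ that can be deduced from the snake-position bounds on the columns where corners occur; I would use this reformulation to avoid handling each column separately.
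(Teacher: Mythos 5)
Your overall strategy matches the paper's: reduce to the highest-path case by a reflection/duality argument, formalize ``strictly above'' as a column-wise inequality of $y$-coordinates, and split on the two residue classes of $k'-k\bmod 4$ so that the two defining inequalities of extended snake position can be applied. That part is fine, and the bookkeeping you anticipate is indeed the substance of the paper's own (brief) proof.

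However, there is a genuine gap in the reduction step. You assert that ``by induction on the length of the snake it is enough to do this for $t=s+1$,'' implicitly chaining the consecutive inequalities $y^+_{i_r,k_r}(x) < y^+_{i_{r+1},k_{r+1}}(x)$ to cover non-adjacent pairs. This chaining fails whenever an intermediate index $i_r$ equals $N$: the path $p^+_{N,k_r}$ occupies only the columns in $\{0,\dots,2N-1\}$ or only those in $\{2N-1,\dots,4N-2\}$, depending on $k_r\bmod 4$, so $y^+_{N,k_r}(x)$ is simply undefined for about half the columns $x$ and the chain of inequalities breaks there. Put differently, ``strictly above'' is not transitive for paths that do not span every column, and this is precisely the content of Remark \ref{not transitive}: extended snake position is not transitive, and the failure happens exactly when the intermediate index is $N$. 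The paper's proof therefore first establishes the consecutive case (as you do) and then explicitly adds a separate check that non-adjacent paths are non-overlapping, citing Remark \ref{not transitive}. To repair your argument you should either verify $y^+_{i_s,k_s}(x)<y^+_{i_t,k_t}(x)$ directly for all pairs $s<t$ (at least when some intermediate $i_r=N$), or invoke the quantitative structure isolated in Remark \ref{not transitive} ($i_r=N$, $i_s+i_t>N-2$, both steps in genuine snake position) to bound the gap in the affected columns.
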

\begin{proof}
We argue about highest weight paths. Lowest weight paths are treated similarly. 

If 
\begin{equation}\label{parity1}
k'-k \equiv 2 + 2(i'-i)-\delta_{Ni'} - \delta_{Ni} \mod 4,
\end{equation}
then $p_{i,k}^+$ is strictly above $p_{i',k'}^+$ because of the inequality
\begin{equation}\label{e:domnonoverlap1}
k'-k \geq 4N + 2 - 2i -2i'-\delta_{iN} - \delta_{i'N}.
\end{equation} 

And if
\begin{equation}\label{parity0}
k'-k \equiv 2(i'-i)-\delta_{Ni'} - \delta_{Ni}\mod 4,
\end{equation}
then $p_{i,k}^+$ is strictly above $p_{i',k'}^+$ because of the inequality
\begin{equation}\label{e:domnonoverlap}
k'-k \geq 4+ 2|i' -i|-\delta_{iN} - \delta_{i'N}.
\end{equation}

Therefore $p_{i_s,k_s}^+$ and $p_{i_{s+1},k_{s+1}}^+$ are non-overlapping for $s=1,\dots,T-1$.
The check that non-adjacent paths are non-overlapping, see Remark \ref{not transitive}, is also straightforward.
\end{proof}

\begin{lemma}
Let $(i_t,k_t)\in \cal X, 1\leq t \leq T$, $T\in \Z_{\geq 1}$, be an extended snake and $\ps\in \nops$. Suppose $(i,k)\in C_{p_t,\pm}$ for some $1\leq t \leq T$. Then
\begin{enumerate}
\item $(i,k)\notin C_{p_s,\pm}$ for any $s\neq t, \ 1\leq s \leq T$, and
\item if $(i,k)\in C_{p_s,\mp}$ for some $s$, $1\leq s \leq T$, then $s=t\pm 1$ and $i=N$.
\end{enumerate}
\label{overlapcond}
\end{lemma}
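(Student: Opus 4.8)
The plan is to analyze the structure of corners of paths in an extended snake and use the non-overlapping condition to rule out coincidences. First I would recall the explicit shape of a path $p\in\scr P_{i,k}$: it lives in the strip of columns between the leftmost and rightmost columns reached by its two halves, and its upper corners $C_{p,+}$ and lower corners $C_{p,-}$ occur in specified columns with $\iota$-coordinates determined by the local max/min of the $y$-values. The key observation is that any corner $(i,k)$ of $p_t$ forces $p_t$ to occupy a specific $y$-value in column $\iota(i,k)$ (or $2N-1$ if $i=N$), namely $y=k$ at the corner itself; moreover, the path is strictly monotone on either side of that corner, so in neighbouring columns $p_t$ sits strictly above (for an upper corner) or strictly below (for a lower corner) the value it would have if the corner were absent.

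For part (i), suppose $(i,k)\in C_{p_t,+}\cap C_{p_s,+}$ with $s\neq t$, say $s<t$. Since $\ps$ is non-overlapping, $p_s$ is strictly above $p_t$; but both paths pass through the same point $(\iota_1(i,k),k)$ in the common column, contradicting strict inequality $y_s<y_t$ there. The same argument with the roles reversed handles $s>t$, and the case $i=N$ works identically using the point $(2N-1,k\pm\epsilon)$ that belongs to a path having a corner of the given type at $(N,k)$. So a corner of $p_t$ (of either sign) cannot be a corner of the same sign in another $p_s$.

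For part (ii), suppose $(i,k)\in C_{p_t,+}\cap C_{p_s,-}$ for some $s\ne t$; WLOG $s<t$ so $p_s$ is strictly above $p_t$. If $i<N$: at an upper corner of $p_t$ in column $\iota(i,k)$ the value is $k$ and the path goes strictly up on both sides; at a lower corner of $p_s$ in the same column the value is also $k$ but the path goes strictly down on both sides. Looking at the column immediately to the left (which exists since an interior corner is not at the boundary of the strip — here I would invoke that for $i<N$ the corner column $\iota(i,k)$ has both neighbouring columns present in the relevant path segments), $p_s$ has value $k+2$ while $p_t$ has value $k-2$... wait, this is consistent with $p_s$ above $p_t$, so I instead extract the contradiction from the non-overlap inequalities \eqref{e:domnonoverlap1}--\eqref{e:domnonoverlap}: the only way two adjacent-in-the-snake paths can share a column-value is governed by those bounds, and sharing an upper-corner-of-$p_t$ with a lower-corner-of-$p_s$ forces $k'-k$ to equal the extremal value, which by Remark \ref{not transitive} and the parity/inequality conditions of the extended snake position forces $s=t-1$ (adjacency) and $i=N$. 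Concretely, for $i<N$ one checks the columns adjacent to $\iota(i,k)$ and derives $y$-values for $p_s$ and $p_t$ that violate $p_s$ strictly above $p_t$; only when $i=N$ (so the "corner" sits in the special column $2N-1$ and the path bends by $\pm(1+\epsilon)$ rather than $\pm2$) does the configuration survive, and then a direct comparison of the endpoints $k_s,k_{s+1}$ against the extended-snake bound shows $s$ and $t$ must be consecutive.

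The main obstacle I expect is part (ii) for $i<N$: one must do the case-by-case bookkeeping of which columns neighbour $\iota(i,k)$ and what the two halves of each path do there, being careful about the two cases in the definition of $\iota$ (whether $2N+k-2i\equiv 0$ or $2$ mod $4$) and about paths whose relevant half is "short" so that a neighbouring column lies outside it. The cleanest route is probably to reduce everything to the $\scr P_{N,\cdot}$ building blocks via the definition of $\scr P_{i,k}$, translate "has a corner at $(i,k)$" into a statement about the $y_j$-sequence of the appropriate $\scr P_{N,\cdot}$-component, and then the non-overlapping hypothesis becomes an explicit inequality between the $y$-sequences of $p_s$ and $p_t$ that is visibly incompatible with both having the prescribed corner — unless the forced equality case of the extended snake position holds, which by Remark \ref{not transitive} is exactly $s=t\pm1$, $i=N$.
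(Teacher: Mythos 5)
Your overall approach is the right one, and it matches the paper's (the paper's own proof of this lemma is essentially "follows from the definitions of non-overlapping paths," with a figure): translate "has a corner at $(i,k)$" into a statement about which points the path occupies in column $\iota_1(i,k)$ (or $2N-1$), and read off a violation of the strict inequality in the definition of non-overlapping. Your proof of part (i) is correct: a shared corner of the same sign forces the two paths to share the point $(\iota_1(i,k),k)$ for $i<N$, or to share $(2N-1,k\mp\epsilon)$ for $i=N$, contradicting strictness.

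Part (ii), however, has a genuine logical error. The reduction "WLOG $s<t$" is invalid, because the $\pm/\mp$ asymmetry is destroyed by swapping $s$ and $t$: the hypothesis is \emph{not} symmetric in $s,t$. In fact, under your normalization ($(i,k)\in C_{p_t,+}\cap C_{p_s,-}$ with $s<t$, so $p_s$ strictly above $p_t$), both the case $i<N$ (shared point $(\iota_1(i,k),k)$) and the case $i=N$ ($p_s$ contains $(2N-1,k+\epsilon)$, $p_t$ contains $(2N-1,k-\epsilon)$, but strict above demands $k+\epsilon<k-\epsilon$) are impossible --- so you would have "proved" that the situation in (ii) never occurs, which contradicts the lemma. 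The case that actually survives is $s>t$ with $i=N$: then $p_t$ has $(2N-1,k-\epsilon)$ and $p_s$ has $(2N-1,k+\epsilon)$, and $p_t$ above $p_s$ is consistent. Relatedly, your worry that "this is consistent with $p_s$ above $p_t$" misses that the coincidence of $y$-values \emph{in the corner column itself} is already the contradiction; there is no need to look at adjacent columns or to invoke \eqref{e:domnonoverlap1}--\eqref{e:domnonoverlap}, which concern dominant paths of an extended snake, not arbitrary members of $\nops$. Likewise Remark \ref{not transitive} says nothing about adjacency of $s$ and $t$.

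Finally, the adjacency $s=t+1$ still needs an argument once you know $i=N$ and $s>t$: for any $r$ with $t<r<s$, the path $p_r$ has at least one point $(2N-1,u)$ with $u\in\Z\pm\epsilon$, and non-overlapping forces $k-\epsilon<u<k+\epsilon$, which is impossible for $u\in\Z\pm\epsilon$. That pinches out all intermediate indices and gives $s=t+1$. You gesture at adjacency but do not actually derive it.
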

\begin{proof}
The lemma follows from the definitions of non-overlapping paths. Examples of (ii) are shown in Figure \ref{figoverlap}.
\end{proof}

\begin{figure}\caption{Illustration of Lemma \ref{overlapcond}. By Theorem \ref{mainthm1} bellow, $L(Y_{3,0}Y_{2,6})$ contains the monomial $Y_{3,12}^{-1}Y_{4,17}^{-1}Y_{3,10}=(Y_{3,12}^{-1}Y_{4,11})(Y_{4,17}^{-1}Y_{4,11}Y_{3,10})$ (left) and $L(Y_{4,1}Y_{4,3})$ contains the monomial $Y_{2,6}Y_{2,12}^{-1}=(Y_{2,6}Y_{4,9}^{-1})(Y_{4,9}Y_{2,12}^{-1})$(right).}
\begin{equation*}
\begin{tikzpicture}[scale=.4,yscale=-1,decoration={
markings,
mark=at position .5 with {\arrow[black,line width=.3mm]{stealth}}}]
\draw[help lines] (0,0) grid (14,18);
\draw (2,-1) -- (6,-1); \draw (8,-1) -- (12,-1);
\draw[double,->] (6,-1) -- (6.8,-1); \draw[double,->] (8,-1) -- (7.2,-1);
\filldraw[fill=white] (7,-1) circle (2mm) node[above=1mm] {$\scriptstyle 4$};
\foreach \x in {1,2,3} {
\filldraw[fill=white] (2*\x,-1) circle (2mm) node[above=1mm] {$\scriptstyle\x$}; 
\filldraw[fill=white] (2*7-2*\x,-1) circle (2mm) node[above=1mm] {$\scriptstyle\x$}; }
\foreach \y in {0,2,4,6,8,10,12,14,16,18} {\node at (-1,\y) {$\scriptstyle\y$};}
\begin{scope}[every node/.style={minimum size=.1cm,inner sep=0mm,fill,circle}]
\draw[thick]  (0,6)node {}  -- (2,8)node {}  -- (4,10)node {} -- (6, 12)node{} -- (7, 10.8)node{} -- (7, 0.8)node{} -- (8, 2)node{} -- (10, 4)node{}-- (12, 6)node{} -- (14, 8)node{};
 \draw[thick] (0,10)node{} -- (2, 12)node{} -- (4, 14)node{} -- (6, 16)node{} -- (7, 17.2)node{} -- (7, 11.2)node{} -- (8, 10)node{} -- (10, 12)node{} -- (12, 14)node{} -- (14, 16)node{};
\end{scope}
\end{tikzpicture}
\begin{tikzpicture}[scale=.4,yscale=-1,decoration={
markings,
mark=at position .5 with {\arrow[black,line width=.3mm]{stealth}}}]
\draw[help lines] (0,0) grid (14,18);
\draw (2,-1) -- (6,-1); \draw (8,-1) -- (12,-1);
\draw[double,->] (6,-1) -- (6.8,-1); \draw[double,->] (8,-1) -- (7.2,-1);
\filldraw[fill=white] (7,-1) circle (2mm) node[above=1mm] {$\scriptstyle 4$};
\foreach \x in {1,2,3} {
\filldraw[fill=white] (2*\x,-1) circle (2mm) node[above=1mm] {$\scriptstyle\x$}; 
\filldraw[fill=white] (2*7-2*\x,-1) circle (2mm) node[above=1mm] {$\scriptstyle\x$}; }
\foreach \y in {0,2,4,6,8,10,12,14,16,18} {\node at (-1,\y) {$\scriptstyle\y$};}
\begin{scope}[every node/.style={minimum size=.1cm,inner sep=0mm,fill,circle}]
\draw[thick]  (0,10)node {}  -- (2,8)node {}  -- (4,6)node {} -- (6, 8)node{} -- (7, 9.2)node{};
 \draw[thick] (7,8.8)node{} -- (8, 10)node{} -- (10, 12)node{} -- (12, 10)node{} -- (14,8)node{};
\end{scope}
\end{tikzpicture}
\end{equation*}
\label{figoverlap}
\end{figure}
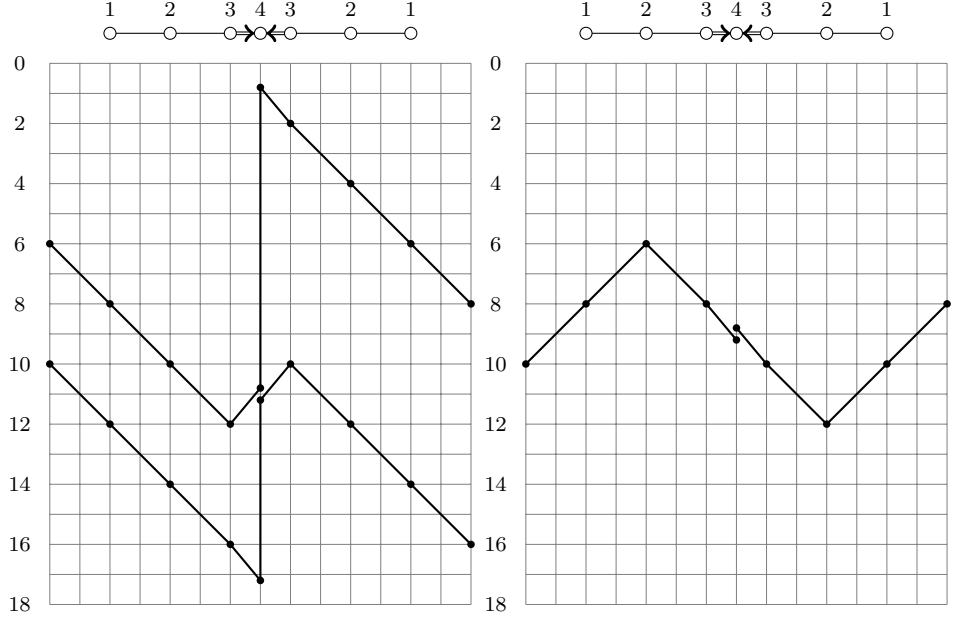

It follows that for any $\ps \in \nops$ and $(j,l)\in \cal W$, at most one of the paths can be lowered at $(j,l)$ and at most one path can be raised at $(j,l)$. Therefore, there is no ambiguity in performing a raising or a lowering move at $(j,l)$ on a non-overlapping tuple of paths $\ps\in\nops$, to yield a new tuple $\ps\scr A_{j,l}^{\pm 1}$.

The following lemma easily follows from the definitions. 
\begin{lemma}
Let $(i_t,k_t)\in \cal X$, $1\leq t\leq T$, be an extended snake of length $T\in \Z_{\geq 1}$ and $\ps\in \nops$. Then $\prod_{t=1}^T\m(p_t)$ is dominant if and only if $p_t$ is the highest path of $\scr P_{i_t,k_t}$ for all $1\leq t \leq T$. Similarly, $\prod_{t=1}^T\m(p_t)$ is anti-dominant if and only if $p_t$ is the lowest path of $\scr P_{i_t,k_t}$ for all $1\leq t \leq T$.\qed
\label{dompath}
\end{lemma}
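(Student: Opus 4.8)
The plan is to reduce the statement to the combinatorics of corners established in Lemma \ref{overlapcond} and the characterization of the highest and lowest paths noted right after the definition of $\m$. Recall that for a single $(i,k)\in\cal X$ and $p\in\scr P_{i,k}$, one has $\m(p)\in\cal P^+$ if and only if $p=p_{i,k}^+$, and dually $\m(p)^{-1}\in\cal P^+$ if and only if $p=p_{i,k}^-$. So the ``if'' direction is essentially immediate: if each $p_t=p_{i_t,k_t}^+$, then each $\m(p_t)$ is dominant, hence so is the product $\prod_{t=1}^T\m(p_t)$; the anti-dominant case is symmetric using $p_t=p_{i_t,k_t}^-$.

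For the ``only if'' direction, suppose $m:=\prod_{t=1}^T\m(p_t)$ is dominant but some $p_t$ is not the highest path of $\scr P_{i_t,k_t}$. Pick the smallest such $t$. Then $p_t$ has at least one lower corner, say $(j,l)\in C_{p_t,-}$, so $u_{j,l}(\m(p_t))<0$ (the exponent of $Y_{j,l}$ in $\m(p_t)$ is negative). First I would argue, using Lemma \ref{overlapcond}(i), that $(j,l)\notin C_{p_s,\pm}$ for every $s\neq t$ unless the exceptional case of Lemma \ref{overlapcond}(ii) occurs, namely $(j,l)\in C_{p_s,+}$ for some $s=t\pm1$ with $j=N$. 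If no such cancellation occurs, then $u_{j,l}(m)=u_{j,l}(\m(p_t))<0$, contradicting that $m$ is dominant, and we are done. The only remaining possibility is that $j=N$ and the negative contribution of $(N,l)$ from $p_t$ is cancelled by a positive contribution from an adjacent path $p_{t\pm1}$. The heart of the argument is to rule this out: I would show that when $(N,l)\in C_{p_t,-}$ and $(N,l)\in C_{p_{t+1},+}$ (say), then in fact $p_{t+1}$ must already have some other lower corner, or the overlap constraint forces $p_t$ to coincide with its highest path after all — in other words, one tracks the ``cascade'' of forced corners and derives either a genuine undominant monomial or a contradiction with non-overlapping. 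Iterating this finitely many times (the snake has length $T$ and each path is a finite sequence) terminates.

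The main obstacle I expect is precisely the $j=N$ cancellation case in Lemma \ref{overlapcond}(ii): one must verify carefully that a $Y_{N,l}^{-1}$ coming from a lower corner of $p_t$ cannot be silently absorbed by a $Y_{N,l}$ from an upper corner of $p_{t+1}$ without creating a fresh negative exponent somewhere else. This requires the explicit local description of corners near the $i=N$ column (the $(2N-1,l\pm\epsilon)$ conditions in the definitions of $C_{p,\pm}$) together with the inequalities \eqref{e:domnonoverlap1} and \eqref{e:domnonoverlap} governing non-overlapping extended snakes; in particular one uses that the lowest corner of $p_t$ at column $2N-1$ sits strictly below the corresponding highest-weight corner of the snake-adjacent path, so that the cancellation, if it happened, would propagate a negative exponent upward along $p_t$ until it exits at a corner not shared with any other path. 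The anti-dominant statement follows by the evident symmetry exchanging upper and lower corners and highest and lowest paths, so I would simply remark that the same argument applies verbatim.
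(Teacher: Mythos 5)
Your setup is the right one: the single–path observation noted after~\eqref{monfunction} handles the ``if'' direction, and for ``only if'' you correctly pick the minimal $t$ with $p_t\neq p_{i_t,k_t}^+$, observe that $p_t$ has a lower corner $(j,l)$, and invoke Lemma~\ref{overlapcond} to see that if $j\neq N$ the variable $Y_{j,l}^{-1}$ survives in $\prod_s\m(p_s)$. (For the record: the paper gives no proof here, only the remark ``easily follows from the definitions.'') The gap is in how you handle $j=N$.

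First, the direction is the reverse of what you wrote. If $(N,l)\in C_{p_t,-}$ then $(2N-1,l+\epsilon)\in p_t$, while $(N,l)\in C_{p_s,+}$ forces $(2N-1,l-\epsilon)\in p_s$; since $l-\epsilon<l+\epsilon$ and the tuple is non-overlapping, $p_s$ must be strictly above $p_t$, i.e.\ $s=t-1$, not $t+1$. More importantly, the ``cascade''/``propagation'' you describe is not needed and is not established; the correct observation is that this cancellation simply cannot occur. By minimality of $t$, $p_{t-1}=p^+_{i_{t-1},k_{t-1}}$, whose set of upper corners is exactly $\{(i_{t-1},k_{t-1})\}$; so a cancellation would require $i_{t-1}=N$ and $l=k_{t-1}$. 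But from the definition of $\scr P_{i_t,k_t}$, the $y$-coordinate of any point of $p_t$ in column $2N-1$ is at least $k_t-\epsilon$ when $i_t=N$, and at least $k_t+(2N-2i_t-1)-\epsilon$ when $i_t<N$ (the bound coming from the half $\ol a\in\scr P_{N,k_t+(2N-2i_t-1)}$); unwinding the lower-corner condition $(2N-1,l+\epsilon)\in p_t$ together with $y_{N-1}\geq y_0-2(N-1)$ gives $l\geq k_t+2$ (resp.\ $l\geq k_t+2N-2i_t$). Since the snake is increasing, $k_t>k_{t-1}$, whence $l>k_{t-1}$ and the required matching upper corner of $p_{t-1}$ does not exist. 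So the lower corner is always exposed and the monomial is not dominant — there is nothing to cascade. Your proposal names the right ingredients (the $(2N-1,l\pm\epsilon)$ description and the snake inequalities) but does not actually close this case, so as written it is incomplete.
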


The following lemma gives information about how overlaps arise when performing a lowering move on a tuple of non-overlapping paths.
\begin{lemma}\label{lowering lemma}
Let $(i_t,k_t)_{1\leq t\leq T}\subseteq \cal X$ be an extended snake of length $T\in \Z_{\geq 1}$ and $\ps\in \nops$. Let $1\leq t \leq T$ and $(j,l)\in \cal X$ be such that the path $p_t$ can be lowered at $(j,l)$. This move introduces an overlap if and only if there is an $s, \ t<s\leq T$, such that $p_s$ has an upper corner at $(j, l+r_j)$ or a lower corner at $(j,l-r_j)$.
\label{overlap condition}
\end{lemma}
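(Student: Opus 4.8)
The plan is to analyze a lowering move on a non-overlapping tuple $\ps$ at a point $(j,l)\in\cal W$ in a purely combinatorial fashion, using the explicit case-by-case description of lowering moves on paths from \cite[Section 5]{MY12a}. First I would recall that since $p_t$ can be lowered at $(j,l)$, by definition $(j,l-r_j)\in C_{p_t,+}$ and $(j,l+r_j)\notin C_{p_t,+}$; the move $p_t\mapsto p_t\scr A_{j,l}^{-1}$ changes the path only in a small neighborhood of the column(s) corresponding to $j$, replacing the local configuration so that the upper corner at $(j,l-r_j)$ disappears and a lower corner appears at $(j,l+r_j)$ (with the appropriate modification near $i=N$, where the $\epsilon$-shifts intervene). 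In particular, the only columns where the $y$-coordinates of $p_t$ can \emph{increase} after the move are precisely those columns touched by the local picture around $(j,l)$; everywhere else $p_t\scr A_{j,l}^{-1}$ coincides with $p_t$.

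The key step is then the following dichotomy. Since the tuple was non-overlapping and $p_t\scr A_{j,l}^{-1}$ differs from $p_t$ only near $(j,l)$ with the $y$-coordinates strictly increasing there, an overlap can only be created between $p_t\scr A_{j,l}^{-1}$ and some $p_s$ with $s>t$ (paths with $s<t$ are strictly above $p_t$, hence remain strictly above the lowered path since it only moved down elsewhere and its values near $(j,l)$ were already above those of $p_t$ before the move — one needs to be slightly careful, but in fact for $s<t$ no overlap can appear because $p_s$ sits above $p_t$ in \emph{every} column and the portion of $p_t\scr A_{j,l}^{-1}$ that moved lies below the old $p_t$). So I would fix $s>t$ and compare $p_t\scr A_{j,l}^{-1}$ with $p_s$ column by column. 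Using Lemma \ref{overlapcond} to control which corners $p_s$ can share with $p_t$, and the explicit local shape of the lowering move, one checks that the lowered path violates the strictly-above condition against $p_s$ precisely when $p_s$ already reaches "low enough" near column $j$, which translates into $p_s$ having an upper corner at $(j,l+r_j)$ or a lower corner at $(j,l-r_j)$ — these being exactly the two local configurations of $p_s$ that sit at the critical height forced by the move. Conversely, if $p_s$ has neither corner there, then near column $j$ it stays strictly above the new local picture, and since nothing changed elsewhere, no overlap is introduced.

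The main obstacle, and where most of the care is needed, is the node $i=N$: there the map $\iota$ sends everything to the single column $2N-1$, the half-integer $\epsilon$-shifts appear in the definition of $C_{p,\pm}$, and a lowering move at $(N,l)$ has a genuinely different local shape (it flips a short up-step into a short down-step, or vice versa, across the two adjacent columns $2N-2$ and $2N$ as well). So the comparison "column $j$" really means comparing behaviour in columns $2N-2,2N-1,2N$ simultaneously, and one must track the $\pm\epsilon$ offsets to see which of the two stated conditions on $p_s$ is the relevant one — this is also exactly the case where, by Remark \ref{not transitive} and Lemma \ref{overlapcond}(ii), $p_s$ and $p_t$ may legitimately share a point of opposite corner-type. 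For $j<N$ the argument is the straightforward column-local check described above. I would therefore organize the proof as: (1) reduce to comparing $p_t\scr A_{j,l}^{-1}$ with a single later path $p_s$; (2) dispose of the case $j<N$ by the local picture of the $\scr A_{j,l}^{-1}$ move and the list in \cite[Section 5]{MY12a}; (3) treat $j=N$ by the same method keeping track of the three affected columns and the $\epsilon$-shifts; in both (2) and (3) the "if and only if" falls out by direct inspection of the finitely many local configurations.
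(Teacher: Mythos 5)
Your proposal correctly identifies the overall strategy — reduce to comparing $p_t\scr A_{j,l}^{-1}$ against a single $p_s$ with $s>t$, then inspect the finitely many local configurations — and correctly flags the $j=N$ subtleties. But it has a genuine gap in the "only if" direction, and the gap is precisely where the extended snake hypothesis must be used (nowhere in your plan is this hypothesis invoked, which should be a warning sign).

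Your claim "Conversely, if $p_s$ has neither corner there, then near column $j$ it stays strictly above the new local picture, and since nothing changed elsewhere, no overlap is introduced" is false for an arbitrary non-overlapping tuple. The paper identifies a third local scenario (near the node $N-1$, adjacent to the doubled node): $p_t$ has an upper corner at $(N-1,k)$, and $p_s$ passes through the $\iota$-column of $N-1$ at height $k+4$ \emph{without} a corner there, descending toward the $N$-column. Lowering $p_t$ at the adjacent point can then collide with $p_s$ even though $p_s$ has neither an upper corner at $(j,l+r_j)$ nor a lower corner at $(j,l-r_j)$. The paper does not derive the lemma from local inspection alone; it shows this third scenario would occur only if $(N-1,k)\in C_{p_t,+}$ while $(N-1,k+4)\in p_s$ is not an upper corner of $p_s$, and then proves that the extended-snake inequalities \eqref{e:extsnakemot1} and \eqref{e:extsnakemot} (depending on the parity case \eqref{parity1} vs.\ \eqref{parity0}) force $p_s$ to be too far down for this to happen. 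Without that global input the "if and only if" does not close. So step (2)/(3) of your plan ("the if and only if falls out by direct inspection") needs to be replaced by: enumerate the overlap scenarios, observe that two of them are the stated corner conditions, exhibit the third, and rule the third out using the extended-snake bound on $k'-k$.
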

\begin{proof}
This is seen by inspection of the definitions above of paths and moves. We illustrate the distinct cases, up to symmetry. In the most cases the overlap occurs at an upper corner $(j, l+r_j)$ of $p_s$:
\begin{equation*}
\begin{tikzpicture}[baseline=0cm,scale=.4,yscale=-1]
\begin{scope}[xshift=12cm]
\draw[help lines] (-1,0) grid (3,6);
\draw (-1,-1) -- (3,-1);
\filldraw[fill=white] (-1,-1) circle (2mm) node[above=3mm,left=-3mm] {$\scriptscriptstyle j+1$};
\filldraw[fill=white] (1,-1) circle (2mm) node[above=3mm,right=-3mm] {$\scriptscriptstyle j$};
\filldraw[fill=white] (3,-1) circle (2mm) node[above=3mm,right=-3mm] {$\scriptscriptstyle j-1$};
\draw[thick,dotted]  (-1,2) -- (1,0) -- (3,2) ;
\begin{scope}[every node/.style={minimum size=.1cm,inner sep=0mm,fill,circle}]
\draw[thick,gray] (-1,6)  node {} -- (1,4) node {}  -- (3,6)node {} ;
\draw[thick]  (-1,2) node {} -- (1,4) node {} -- (3,2) node {}  ;
\end{scope}

\draw[double distance=.5mm,->,shorten <= 2mm,shorten >= 2mm] (1,0) -- (1,4);

\end{scope}
\begin{scope}[xshift=5cm]
\draw[help lines] (0,0) grid (3,6);
\draw[double distance =.2mm,->,shorten >= 1mm] (-1,-1) -- (0,-1);
\draw[double distance =.2mm,->,shorten >= 1mm] (1,-1) -- (0,-1);
\draw (1,-1) -- (3,-1);
\filldraw[fill=white] (-1,-1) circle (2mm) node[above=3mm,left=-3mm] {$\scriptscriptstyle N-1$};
\filldraw[fill=white] (0,-1) circle (2mm) node[above=1mm] {$\scriptscriptstyle N$};
\filldraw[fill=white] (1,-1) circle (2mm) node[above=3mm,right=-3mm] {$\scriptscriptstyle N-1$};
\filldraw[fill=white] (3,-1) circle (2mm) node[above=3mm,right=-3mm] {$\scriptscriptstyle N-2$};
\draw[thick,dotted]  (0,1.3) -- (1,0) -- (3,2) ;
\begin{scope}[every node/.style={minimum size=.1cm,inner sep=0mm,fill,circle}]
\draw[thick,gray] (0,5.3) node {} -- (1,4) node {}   -- (3,6)node {} ;
\draw[thick]  (0,2.7)node {}  -- (1,4)node {}  -- (3,2)node {}  ;
\end{scope}
\draw[double distance=.5mm,->,shorten <= 2mm,shorten >= 2mm] (1,0) -- (1,4);
\end{scope}
\begin{scope}[xshift=0cm]
\draw[help lines] (-1,1) grid (1,4);
\draw[double distance =.2mm,->,shorten >= 1mm] (-1,-1) -- (0,-1);
\draw[double distance =.2mm,->,shorten >= 1mm] (1,-1) -- (0,-1);
\filldraw[fill=white] (-1,-1) circle (2mm) node[above=3mm,left=-3mm] {$\scriptscriptstyle N-1$};
\filldraw[fill=white] (0,-1) circle (2mm) node[above=1mm] {$\scriptscriptstyle N$};
\filldraw[fill=white] (1,-1) circle (2mm) node[above=3mm,right=-3mm] {$\scriptscriptstyle N-1$};
\draw[thick,dotted]  (-1,2) -- (0,.7) ;
\begin{scope}[every node/.style={minimum size=.1cm,inner sep=0mm,fill,circle}]
\draw[thick,gray]  (0,2.9)node {}  -- (1,4)node {}  ;
\draw[thick]  (-1,2) node {} -- (0,3.3)node {}  ;
\end{scope}
\draw[double,->,shorten <= 2mm,shorten >= 2mm] (-.3,1) -- (-.3,3);
\end{scope}
\end{tikzpicture}
\end{equation*}

The exception is when the upper corner $(j, l-r_j)$ of $p_t$ is also a lower corner of $p_s$, which can happen only when $j=N$, cf.  Lemma \ref{overlapcond}(ii): 
\begin{equation*}\begin{tikzpicture}[baseline=0cm,scale=.4,yscale=-1]
\begin{scope}[xshift=0cm]
\draw[help lines] (-1,1) grid (1,4);
\draw[double distance =.2mm,->,shorten >= 1mm] (-1,-1) -- (0,-1);
\draw[double distance =.2mm,->,shorten >= 1mm] (1,-1) -- (0,-1);
\filldraw[fill=white] (-1,-1) circle (2mm) node[above=3mm,left=-3mm] {$\scriptscriptstyle N-1$};
\filldraw[fill=white] (0,-1) circle (2mm) node[above=1mm] {$\scriptscriptstyle N$};
\filldraw[fill=white] (1,-1) circle (2mm) node[above=3mm,right=-3mm] {$\scriptscriptstyle N-1$};
\draw[thick,dotted]  (-1,3) -- (0,1.7) ;

\draw[double,->,shorten <= 2mm,shorten >= 2mm] (-.3,2) -- (-.3,4);

\begin{scope}[every node/.style={minimum size=.1cm,inner sep=0mm,fill,circle}]
\draw[thick,gray]  (0,2.3) node {} -- (1,1) node {} ;
\draw[thick]  (-1,3) node {} -- (0,4.3) node {} ;
\end{scope}

\end{scope}
\end{tikzpicture}
\end{equation*}

The only other possible overlapping scenario is as shown. 
\begin{equation*}\begin{tikzpicture}[baseline=0cm,scale=.4,yscale=-1,xscale=-1]
\draw[help lines] (-3,-2) grid (0,4);
\draw[double distance =.2mm,->,shorten >= 1mm] (-1,-3) -- (0,-3);
\draw[double distance =.2mm,->,shorten >= 1mm] (1,-3) -- (0,-3);
\draw (-3,-3) -- (-1,-3);
\filldraw[fill=white] (1,-3) circle (2mm) node[above=3mm,left=-3mm] {$\scriptscriptstyle N-1$};
\filldraw[fill=white] (0,-3) circle (2mm) node[above=1mm] {$\scriptscriptstyle N$};
\filldraw[fill=white] (-1,-3) circle (2mm) node[above=3mm,right=-3mm] {$\scriptscriptstyle N-1$};
\filldraw[fill=white] (-3,-3) circle (2mm) node[above=3mm,right=-3mm] {$\scriptscriptstyle N-2$};
\begin{scope}[every node/.style={minimum size=.1cm,inner sep=0mm,fill,circle}]
\draw[thick,gray]  (-3,4.1)node {}  -- (-1,2.1) node {} -- (0,.8)node {} ;
\draw [thick, dotted] (-3,0)node {}  -- (-1,-2)node {}  -- (0,-.7)node {} ;
\draw[thick]  (-3,-.1)node {}  -- (-1,1.9)node {}  -- (0,.6)node {}  ;
\draw[double distance=.5mm,->,shorten <= 2mm,shorten >= 2mm] (-1,-1.8) -- (-1,1.8);
\end{scope}
\end{tikzpicture}
\end{equation*}
\label{illegalmove}
This situation occurs if and only if $(N-1,k)$ is an upper corner of some path $p$, $(N-1, k+4)\in p'$ for some $p'$ and $(N-1,k+4)$ is not an upper corner of $p'$. 
However, we claim this does not happen for extended snakes.

Indeed, let $(i,k)\in \cal X$ and $(i',k')\in \cal X$ such that $k'\geq k$. Let $p\in\mathscr P_{i,k}, p'\in \mathscr P_{i'k'}$. 

If the extended snake also has a point $(i'',k'')$ with $k'>k''>k$ then the overlap above is impossible.
Otherwise we use use the definition of the extended snake modules. In the case of (\ref{parity1}), we have:
\begin{equation}\label{e:extsnakemot1}
k'-k \geq 2+ 2i' + 2i -\delta_{iN} - \delta_{i'N}.
\end{equation}
It follows that if for some $\ell\in \Z$, $(N-1,\ell)\in C_{p,+}$ for some $p\in \scr P_{i,k}$, then $(N, \ell+3)\notin p'$, for all $p'\in \scr P_{i',k'}$. 

The case of (\ref{parity0}), 
is similar with the use of equation
\begin{equation}\label{e:extsnakemot}
k'-k \geq 4 + 2i' - 2i -\delta_{iN} - \delta_{i'N}.
\end{equation}
\end{proof} 

The property of the tuples of paths described by this lemma is used for the proof of Theorem \ref{mainthm1}. Informally, it means that the first overlap between paths always corresponds, in the $q$-character, to an illegal lowering step in some $U_q(\hlie {sl}_2)$ evaluation module. 

\begin{rem}\label{extended def rem} The definition of the extended snakes is exactly the combination of
(\ref{e:domnonoverlap1}), (\ref{e:extsnakemot1}) in the case of (\ref{parity1}) and 
of (\ref{e:domnonoverlap}) in the case of (\ref{parity0}). Observe that \eqref{e:domnonoverlap} implies (\ref{e:extsnakemot}) in the latter case.
\end{rem}

\subsection{The $q$-character of an extended snake module} 
Many properties of snakes can be generalized to extended snakes. After Lemma \ref{lowering lemma} is established, 
the proofs are the same as in \cite{MY12a}, Section 5.6. We list a few such properties. 

\begin{lemma}
Let $(i_t,k_t) \in \cal X$, $1\leq t \leq T$, $T\in\Z_{\geq 1}$, be an extended snake. The map
$$\nops \rightarrow \Z[Y_{i,k}^{\pm 1}]_{(i,k)\in \cal X}, \quad \ps \mapsto \prod_{t=1}^{T}\m(p_t),$$
is injective.\qed
\label{monpathcor}
\end{lemma}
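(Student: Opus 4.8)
The plan is to show injectivity of the map $\ps \mapsto \prod_{t=1}^{T}\m(p_t)$ on $\nops$ by recovering each individual path $p_t$ from the product monomial together with the combinatorial data of the extended snake. The key input is Lemma \ref{overlapcond}: for a non-overlapping tuple, the corner sets $C_{p_t,\pm}$ are essentially disjoint across $t$, the only overlap being that an $(N,l)$ may be simultaneously an upper corner of $p_t$ and a lower corner of $p_{t\pm1}$. So the first step is to observe that $\prod_{t=1}^T\m(p_t) = \prod_t\big(\prod_{(j,l)\in C_{p_t,+}}Y_{j,l}\prod_{(j,l)\in C_{p_t,-}}Y_{j,l}^{-1}\big)$, and by Lemma \ref{overlapcond} the exponent $u_{j,l}$ of $Y_{j,l}$ in the product equals $+1$ if $(j,l)$ is an upper corner of a unique $p_t$ and not a lower corner of any path; $-1$ if it is a lower corner of a unique $p_t$ and not an upper corner of any; and $0$ precisely in the ``cancelling'' situation where $(N,l)$ is an upper corner of some $p_t$ and a lower corner of $p_{t-1}$ (or $p_{t+1}$). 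Thus the product monomial records all corners except those involved in an $N$-type cancellation, and those are determined by the positions of the remaining corners of the two participating paths.

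Next I would use the fact, already recorded in the excerpt, that each $\scr P_{i,k}$ is a set of lattice paths whose shape is rigidly constrained: a path in $\scr P_{i,k}$ is determined by its sequence of up/down steps, equivalently by its set of corners together with the fixed endpoints dictated by $(i,k)$. So it suffices to recover, for each $t$, the full corner set $C_{p_t,+}\sqcup C_{p_t,-}$ from the product monomial. The induction is on $t$ (or a scan by $t$): the highest path $p_{i_1,k_1}^+$ lies strictly above every other path, hence its topmost corners cannot be cancelled, and more generally one peels off the paths in order, at each stage knowing which remaining corners in the monomial can belong to $p_t$ (those in the appropriate $\iota$-image region determined by $(i_t,k_t)$ and bounded below by the already-reconstructed $p_{t-1}$, since the tuple is non-overlapping) and then filling in any cancelled $(N,\cdot)$ corners, which are forced by Lemma \ref{overlapcond}(ii) to sit between $p_{t-1}$ and $p_t$ and are pinned down by the surrounding reconstructed corners. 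Once $C_{p_t,+}$ and $C_{p_t,-}$ are known, $p_t$ is uniquely determined, completing the reconstruction.

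A cleaner way to package the same argument, and probably the one I would actually write, is: suppose $\ps,\qs\in\nops$ with $\prod_t\m(p_t)=\prod_t\m(q_t)$; let $t_0$ be minimal with $p_{t_0}\neq q_{t_0}$. Up to $t_0$ the partial products agree, so $\prod_{t\geq t_0}\m(p_t)=\prod_{t\geq t_0}\m(q_t)$, and now the non-overlapping condition says $p_{t_0}$ and $q_{t_0}$ are the unique ``top'' paths among $(p_t)_{t\geq t_0}$ and $(q_t)_{t\geq t_0}$ respectively, strictly above all later ones; hence their uncancelled top corners appear in the common monomial and, reading off the highest corner in each relevant column, one sees $p_{t_0}$ and $q_{t_0}$ have the same corner data above the level of the later paths — and then the tail recursion / comparison of the full $\m(p_{t_0})$ vs $\m(q_{t_0})$ forces $p_{t_0}=q_{t_0}$, a contradiction. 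Either way the only subtlety is the bookkeeping for the $(N,l)$ corners that cancel between adjacent paths, governed by Lemma \ref{overlapcond}(ii); handling those carefully is the main obstacle, and everything else is routine inspection of the path definitions, exactly as in \cite{MY12a}, Section 5.6.
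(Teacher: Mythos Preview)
The paper does not actually give a proof of this lemma: it is stated with \qed, and the surrounding text says that, once Lemma~\ref{overlap condition} is established, the proof ``is the same as in \cite{MY12a}, Section~5.6''. So there is no in-paper argument to compare against, only the deferred one.

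Your sketch is in the same spirit as the argument in \cite{MY12a}: use Lemma~\ref{overlapcond} to see that the corner multisets $C_{p_t,\pm}$ are pairwise disjoint across $t$ except for possible coincidences at points $(N,l)$ between adjacent paths, and then reconstruct the tuple from the product monomial. That is the right idea.

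There is, however, a soft spot in your ``cleaner'' version that you should tighten. You write that, after stripping the common prefix, the ``uncancelled top corners'' of $p_{t_0}$ appear in the product and reading off the highest corner in each column forces $p_{t_0}=q_{t_0}$. But a \emph{lower} corner of $p_{t_0}$ at $(N,l)$ may be cancelled by an \emph{upper} corner of $p_{t_0+1}$, and since $p_{t_0+1}$ and $q_{t_0+1}$ may differ, the cancellation patterns on the two sides need not match a priori. You flag this as ``the main obstacle'' but do not actually discharge it. One clean way to close the gap (and this is essentially how \cite{MY12a} proceeds) is to observe that a path $p\in\scr P_{i,k}$ is already determined by its corners in columns $j<N$ together with $(i,k)$, because the behaviour at column $2N-1$ is forced by the adjacent $(N\!-\!1)$-columns; since by Lemma~\ref{overlapcond} corners with $j<N$ are never cancelled, the topmost path $p_{t_0}$ can be read off unambiguously from the highest surviving corners in those columns, and then one peels it off and proceeds. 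Alternatively, reconstruct column-by-column in $x$ rather than path-by-path in $t$, using that in each column the $y$-coordinates of the $T$ paths are strictly ordered and that away from $x=2N-1$ no cancellation occurs. Either refinement makes your sketch into a complete proof.
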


\begin{lemma}
Let $(i_t,k_t)\in \cal X$, $1\leq t\leq T$, $T\in \Z_{\geq 1}$, be an extended snake and $\ps \in \nops$. Let $m = \prod_{t=1}^{T}\m(p_t)$. If $mA_{i,k}^{-1}$ is not of the form $\prod_{t=1}^T \m(p_t')$ for any $(p_1', \ldots, p_T')\in \nops$, then neither $mA_{i,k}^{-1}A_{j,l}$ unless $(j,l) = (i,k)$. \qed
\label{excl}
\end{lemma}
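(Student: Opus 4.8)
The plan is to adapt the argument for snake modules from \cite{MY12a}, Section 5.6, using Lemma \ref{lowering lemma} to handle the overlaps that are new to the extended setting. Arguing by contradiction, suppose $(j,l)\ne(i,k)$ and $mA_{i,k}^{-1}A_{j,l}=\prod_{t=1}^T\m(p_t')$ for some $(p_1',\dots,p_T')\in\nops$; I would show that this forces $mA_{i,k}^{-1}$ to be of the form $\prod_{t=1}^T\m(p_t'')$ with $(p_1'',\dots,p_T'')\in\nops$, contradicting the hypothesis. First I would localize: by \eqref{e:lroots} the Laurent monomial $A_{i,k}^{-1}A_{j,l}$ only involves the variables $Y_{i,k\pm r_i}^{-1}$, $Y_{j,l\pm r_j}$, and variables $Y_{h,\cdot}$ with $h$ adjacent to $i$ or to $j$; since, by \eqref{monfunction}, every $\m(p)$ has all exponents in $\{-1,0,1\}$ and, by Lemma \ref{overlapcond}, in a non-overlapping tuple each point of $\cal X$ is an upper corner of at most one path and a lower corner of at most one (the two occurring together only at node $N$), only a bounded number of entries of $\ps$ and of $(p_1',\dots,p_T')$ can carry corners at these points. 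This reduces the claim to a finite check among the possible local shapes of the paths in the columns attached to $(i,k)$ and to $(j,l)$.

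Next I would split into cases according to why $mA_{i,k}^{-1}$ fails to be a path monomial. By the explicit description of lowering moves (\cite{MY12a}, Section 5) together with Lemma \ref{lowering lemma}, the monomial $mA_{i,k}^{-1}$ is a path monomial if and only if some $p_t$ can be lowered at $(i,k)$ --- necessarily a unique one, by Lemma \ref{overlapcond} --- and no index $s$ has $p_s$ with an upper corner at $(i,k+r_i)$ or a lower corner at $(i,k-r_i)$; so either (a) no $p_t$ can be lowered at $(i,k)$, or (b) the unique lowerable $p_t$ produces an overlap, witnessed by some such $p_s$. In case (a), multiplying by $A_{j,l}$ with $(j,l)\ne(i,k)$ only worsens the obstruction: passing to $U_q(\hlie g_i)$ and using the compatibility of $\beta_i$ with $\chi_q$ and Lemma \ref{mon sl2 condition}, together with the facts that $\beta_i(A_{j,l})=1$ unless $j$ is adjacent to $i$ and that, in that case, $\beta_i(A_{j,l})$ contributes only negative powers of the relevant $Y_{i,\cdot}$, one concludes that $mA_{i,k}^{-1}A_{j,l}$ cannot be a path monomial at all. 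In case (b), since $A_{j,l}$ alters corners only in columns equal or adjacent to those of $(i,k)$ or $(j,l)$, the path of $(p_1',\dots,p_T')$ carrying the $A_{i,k}^{-1}$-change and the witness $p_s$ still overlap, contradicting $(p_1',\dots,p_T')\in\nops$, unless $A_{j,l}$ is forced to equal $A_{i,k}$.

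The main obstacle is the bookkeeping at node $N$. There the $N$-th lowering move involves the half-integer offset $\epsilon$, and by Lemma \ref{overlapcond}(ii) a point $(N,l)$ can simultaneously be an upper corner of $p_{t\mp1}$ and a lower corner of $p_t$, so when $i=N$ or $j=N$ (or when $i$ or $j$ equals $N-1$ and forces a rearrangement along node $N$) several entries of the tuple move at once. For these configurations I would rerun the local check using the defining inequalities of extended snakes --- \eqref{e:domnonoverlap1} and \eqref{e:extsnakemot1} under \eqref{parity1}, and \eqref{e:domnonoverlap} under \eqref{parity0}, cf.\ Remark \ref{extended def rem} --- just as in the proof of Lemma \ref{lowering lemma}, to exclude any multi-path rearrangement that repairs the obstruction without collapsing to $(j,l)=(i,k)$. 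Away from node $N$ the Dynkin diagram is a line and the argument is the verbatim analogue of \cite{MY12a}; the only genuinely new input is that the extended-snake bounds keep the node-$N$ interactions under control, which is exactly the content of Lemma \ref{lowering lemma}.
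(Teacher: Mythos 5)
Your high-level strategy is exactly the paper's: the paper's entire proof of Lemma~\ref{excl} (and of Lemmas~\ref{monpathcor}, \ref{sl2inside}) is the one sentence ``After Lemma~\ref{lowering lemma} is established, the proofs are the same as in \cite{MY12a}, Section 5.6,'' and you likewise propose to run the MY12a argument for non-overlapping tuples, using Lemma~\ref{lowering lemma} to keep the node-$N$ cancellations of Lemma~\ref{overlapcond}(ii) under control via the extended-snake inequalities of Remark~\ref{extended def rem}. So there is no disagreement about the route.

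However, the details you sketch in are not yet watertight. In case~(a) your reduction via $\beta_i$ and Lemma~\ref{mon sl2 condition} covers $j$ adjacent to $i$ and $|j-i|\geq 2$, but says nothing about the subcase $j=i$, $l\neq k$, where $\beta_i(A_{i,l})$ contributes a \emph{positive} power of some $Y_{i,\cdot}$, so the ``only worsens the obstruction'' phrase does not apply verbatim and a separate $\lie{sl}_2$-string argument is needed. In case~(b) you claim the tuple $(p_1',\dots,p_T')$ realizing $mA_{i,k}^{-1}A_{j,l}$ must still exhibit the overlap witnessed by $p_s$; but a priori $(p_1',\dots,p_T')$ is any tuple whose monomial product happens to equal $mA_{i,k}^{-1}A_{j,l}$ --- it is constrained only by Lemma~\ref{monpathcor} and the fact that the ratio with $\prod_t\m(p_t)$ is the localized Laurent monomial $A_{i,k}^{-1}A_{j,l}$ --- so one must first show that $(p_1',\dots,p_T')$ agrees with $\ps$ outside a bounded window determined by $(i,k)$ and $(j,l)$, and then do the finite case check on local configurations. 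That agreement-outside-a-window step, plus the $j=i$ subcase, are precisely the bookkeeping the MY12a proof supplies; your sketch names Lemma~\ref{lowering lemma} as the new input (correctly) but does not actually carry out those two steps.
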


\begin{lemma}
Let $(i_t,k_t)\in \cal X$, $1\leq t \leq T$, $T\in \Z_{\geq 1}$, be an extended snake and $\ps\in \nops$. Pick and fix an $i\in I$. Let $\ol{\scr P}\subseteq \nops$ be the set of those non-overlapping tuples of paths that can be obtained from $\ps$ by performing a sequence of raising or lowering moves at points of the form $(i,l)\in \cal W$. Then $\sum_{\ps\in \ol{\scr P}}\beta_i(\prod_{t=1}^T \m(p_t))$ is the $q$-character of simple finite-dimensional $U_q(\hlie{sl}_2)$-module.\qed
\label{sl2inside}
\end{lemma}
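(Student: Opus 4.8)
The plan is to identify the set $\ol{\scr P}$, equipped with its $i$-move structure, with the monomial set of a thin simple $U_q(\hlie{sl}_2)$-module, and to check that $\ps\mapsto\beta_i(\prod_t\m(p_t))$ maps it bijectively onto the corresponding $q$-character. Since every thin simple $U_q(\hlie{sl}_2)$-module is special with $q$-character given by \eqref{e:qchev} (Lemma \ref{thin sl2}), it suffices to produce a unique $i$-dominant monomial $M\in\beta_i(\ol{\scr P})$, namely the one obtained from $\ps$ by raising maximally at node $i$, to show that the $q$-strings of $M$ are pairwise disjoint, and to show that the $i$-moves on $\ol{\scr P}$ match the $sl_2$ lowering moves on $\cal M(L(M))$ bijectively. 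First I would record two structural facts. By the discussion following Lemma \ref{overlapcond}, an $i$-move on a tuple in $\nops$ is unambiguous and modifies exactly one path; and whether $p_t$ can be lowered at $(i,l)\in\cal W$ depends only on the color-$i$ corners of $p_t$, while whether the resulting tuple stays non-overlapping is governed by Lemma \ref{lowering lemma}. Hence $\ol{\scr P}$ and its move structure are completely determined by how the color-$i$ corners of $p_1,\dots,p_T$ are distributed among the rungs $\{l:(i,l)\in\cal W\}$.

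Next comes the single-path case, which is the base case. For one point $(j,k)\in\cal X$ and fixed $i$, the argument of \cite{MY12a}, Section 5.6, shows that the color-$i$ corners occurring along paths of $\scr P_{j,k}$ form disjoint $q$-strings in general position and that the $i$-moves realize exactly the $sl_2$ lowering moves of \eqref{e:qchev}; thus the set $\{\beta_i(\m(p)):p\in\scr P_{j,k}\ \text{reachable from}\ p^+_{j,k}\ \text{by}\ i\text{-moves}\}$ is the $q$-character of a thin simple $U_q(\hlie{sl}_2)$-module. Nothing here uses the extended-snake condition, so this part is literally as in \cite{MY12a}.

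Finally, the gluing step, which is the crux. By Lemma \ref{overlapcond}(i), for $i\neq N$ the color-$i$ corners of distinct paths lie on distinct rungs (and remain so under $i$-moves), so the $q$-strings contributed by the individual $p_t$ are pairwise disjoint; their union is the $q$-string data of a thin simple $U_q(\hlie{sl}_2)$-module, $\sum_{\ps\in\ol{\scr P}}\beta_i(\prod_t\m(p_t))$ equals the product of the single-path $q$-characters, and by Lemma \ref{lowering lemma} no non-overlapping constraint is ever active. For $i=N$, Lemma \ref{overlapcond}(ii) permits an upper $N$-corner of $p_t$ to coincide with a lower $N$-corner of an adjacent $p_{t\pm1}$; there the two contributions cancel in $\beta_N(\prod_t\m(p_t))$, which is precisely the effect of the legal $sl_2$ lowering move at the junction of the two $q$-strings, so the glued $q$-string data is again that of a thin module. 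In every case Lemma \ref{lowering lemma}, together with the extended-snake inequalities \eqref{e:extsnakemot1} and \eqref{e:extsnakemot}, guarantees that the $i$-moves forbidden by the non-overlapping condition are exactly those that would be illegal $sl_2$ steps in the sense of Lemma \ref{sl2 monomials} (i.e.\ would create a monomial with $u_{N,b}-u_{N,bq^2}=2$). Hence $\ol{\scr P}\to\cal M(L(M))$ is a bijection compatible with moves, $M$ is the unique $i$-dominant monomial, and $\sum_{\ps\in\ol{\scr P}}\beta_i(\prod_t\m(p_t))=\chi_q(L(M))$.

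The main obstacle is this last gluing step: the bookkeeping needed to see that the color-$i$ corners of different paths interact only through the adjacent $i=N$ coincidences of Lemma \ref{overlapcond}(ii), and that these coincidences correspond to legal rather than illegal $sl_2$ moves, so that the union of the per-path $q$-strings is the $q$-string data of a thin (hence special, simple) $U_q(\hlie{sl}_2)$-module rather than a configuration violating the thinness criterion of Lemma \ref{sl2 monomials}.
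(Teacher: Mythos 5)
Your overall strategy---reduce to the single-path case of \cite{MY12a} and then glue, using Lemma \ref{lowering lemma} to control the non-overlapping constraint---is indeed the route the paper intends, since the paper defers to \cite{MY12a}, Section 5.6, once Lemma \ref{lowering lemma} is in hand. But the gluing step contains a genuine error and the key verification is asserted rather than shown.

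The assertion that ``for $i\neq N$, by Lemma \ref{lowering lemma} no non-overlapping constraint is ever active'' is false, and is not what Lemma \ref{lowering lemma} says. That lemma characterizes exactly when lowering $p_t$ at $(j,l)$ creates an overlap: it does so iff some $p_s$ (with $s>t$) has an upper corner at $(j,l+r_j)$ or a lower corner at $(j,l-r_j)$. Nothing in the statement or proof restricts this to $j=N$; the first pictured case in its proof is drawn for generic $j$, and in practice a lowering move at a color $j<N$ can and does create overlaps. Your own final paragraph contradicts the claim, since you then speak of ``the $i$-moves forbidden by the non-overlapping condition.'' Relatedly, the deduction ``color-$i$ corners of distinct paths lie on distinct rungs (Lemma \ref{overlapcond}(i)), hence the contributed $q$-strings are pairwise disjoint'' is invalid: distinctness of the endpoints of two $q$-strings does not make the strings disjoint as sets, and disjointness of the strings is not what is needed anyway---what Theorem \ref{thincriteria} requires is only that the $\beta_i$-projection be a thin $U_q(\widehat{\mathfrak{sl}}_2)$ $q$-character, i.e.\ that the strings be pairwise disjoint \emph{or} that the adjacencies be accounted for correctly.

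The real content, which you flag as ``the main obstacle'' but do not prove, is precisely the matching: a lowering move at $(i,l)$ on the tuple is blocked by non-overlapping (per Lemma \ref{lowering lemma}) exactly when the corresponding $sl_2$ step $\beta_i(m)\mapsto\beta_i(m)\beta_i(A_{i,l})^{-1}$ is illegal in the sense of Lemma \ref{sl2 monomials}, and conversely every locally legal, non-blocked lowering move corresponds to a legal $sl_2$ step. Establishing both directions (including checking that after the move the $i$-projection stays inside a single thin $U_q(\widehat{\mathfrak{sl}}_2)$-character, which is where Lemma \ref{overlapcond}(ii) and the $i=N$ cancellations enter) is the proof, not a corollary of what precedes. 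As written, your proposal replaces this with an incorrect blanket claim for $i<N$ and an unproved ``guarantees'' for the general case. Also, the parenthetical criterion ``$u_{N,b}-u_{N,bq^2}=2$'' should refer to $u_{i,\cdot}$, not $u_{N,\cdot}$: the illegal-step criterion of Lemma \ref{sl2 monomials} is applied at the fixed color $i$, which need not be $N$.
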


\medskip

We are now prepared to prove the first main result of this paper.
\begin{thm}
Let $(i_t,k_t)\in \cal X$, $1\leq t\leq T$, be an extended snake of length $T\in\Z_{\geq 1}$ and $m_+= \prod_{t=1}^T Y_{i_t,k_t}$. Then
\begin{equation}\label{e:mainthm1}\chi_q(L(m_+)) = \sum_{\ps\in \nops} \prod_{t=1}^T\m(p_t).
\end{equation}
In particular $L(m_+)$ is thin, tame, special and anti-special.
\label{mainthm1}
\end{thm}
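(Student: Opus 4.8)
The strategy is to apply the criterion of Theorem \ref{thincriteria} to the candidate set of monomials $\cal M := \{\prod_{t=1}^T\m(p_t) \mid \ps\in\nops\}$. This reduces the theorem to verifying the three hypotheses of that theorem; once verified, we obtain \eqref{e:mainthm1} and the conclusion that $L(m_+)$ is thin and special. Antispeciality will follow by a symmetry argument (applying the automorphism of $\uqh g$ that interchanges highest and lowest $\ell$-weights, or equivalently noting that the lowest monomial is the unique antidominant one by Lemma \ref{dompath}), and tameness follows from thinness as remarked after the definitions in Section \ref{s:thin tame}.

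\textbf{Step 1: hypothesis (1) --- the unique dominant monomial.} By Lemma \ref{dompath}, $\prod_t\m(p_t)$ is dominant if and only if each $p_t$ is the highest path $p_{i_t,k_t}^+$, and by the first lemma of Section \ref{s:extsnake} this tuple is indeed in $\nops$; this gives exactly one dominant element of $\cal M$, namely $m_+$ (the highest path has no lower corners, so $\m(p_{i_t,k_t}^+) = \prod Y$ over its upper corners, and one checks this product equals $Y_{i_t,k_t}$). So $\{m_+\}=\cal P^+\cap\cal M$.

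\textbf{Step 2: hypotheses (2) and (3) --- combinatorics of moves.} Hypothesis (2) is precisely Lemma \ref{excl}. Hypothesis (3) requires, for each $m=\prod_t\m(p_t)\in\cal M$ and each $i\in I$, an $i$-dominant $M\in\cal M$ with $\chi_q(L(\beta_i(M)))=\sum_{m'}\beta_i(m')$ over $m'\in m\Z[A_{i,a}^{\pm1}]\cap\cal M$; this is Lemma \ref{sl2inside} together with the observation (from Lemma \ref{overlapcond} and the discussion following it) that moves at points $(i,l)$ on a non-overlapping tuple are unambiguous, so the $A_{i,a}$-string through $m$ inside $\cal M$ corresponds bijectively to a $U_q(\hlie{sl}_2)$-string of paths, whose monomial sum is the $q$-character of a (thin, hence special) $U_q(\hlie{sl}_2)$-module by Lemma \ref{sl2inside}. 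The $i$-dominant representative $M$ is the top of that string. Here one invokes Lemma \ref{thin sl2} / Lemma \ref{sl2 monomials} to identify the relevant $\mathfrak{sl}_2$-module concretely.

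\textbf{Main obstacle.} The genuinely delicate point --- and the reason Lemma \ref{lowering lemma} was proved --- is that the candidate set $\cal M$ must be \emph{closed} in the appropriate sense: a priori, applying a lowering move $\scr A_{j,l}^{-1}$ to a tuple $\ps\in\nops$ could produce an \emph{overlapping} tuple, which would not correspond to an element of $\cal M$, and one must be sure this never happens along monomials actually reached from $m_+$. Lemma \ref{lowering lemma} says precisely that the first overlap, when it would occur, coincides with an illegal lowering step in some $U_q(\hlie{sl}_2)$ evaluation module --- i.e. a step that is forbidden by Lemma \ref{sl2 monomials} anyway --- so the recursive Frenkel--Mukhin algorithm of \cite{FM01} never actually attempts it, and the set $\cal M$ computed by that algorithm is exactly $\{\prod_t\m(p_t):\ps\in\nops\}$. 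Making this precise is where the bulk of the argument lies; as the paper notes, once Lemma \ref{lowering lemma} is in hand the remaining bookkeeping is identical to \cite{MY12a}, Section 5.6, so I would carry it out by following that template, checking at each step that the $\delta_{Ni}$-corrections in the extended-snake inequalities do not break the arguments (the only new cases being the $i'=N$ exceptions flagged in Remark \ref{not transitive}).
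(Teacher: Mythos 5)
Your proposal follows the paper's proof essentially verbatim: both apply Theorem \ref{thincriteria} to the candidate set $\cal M = \{\prod_t \m(p_t) \mid \ps\in\nops\}$, verify hypothesis (1) via Lemma \ref{dompath}, hypothesis (2) via Lemma \ref{excl}, hypothesis (3) via Lemma \ref{sl2inside}, and deduce antispeciality again from Lemma \ref{dompath}; your remarks on the role of Lemma \ref{lowering lemma} correctly unpack the content that the paper tucks inside Lemma \ref{sl2inside}. The only cosmetic omission is an explicit citation of Lemma \ref{monpathcor} to guarantee that $\cal M$ consists of \emph{distinct} monomials, which Theorem \ref{thincriteria} presupposes.
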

\begin{proof}
By Lemma \ref{dompath} and the definition of highest path we have that $m_+ = \prod_{t=1}^T\m(p_{i_t,k_t}^+)$. 
By Lemma \ref{monpathcor}, monomials $\prod_{t=1}^T\m(p_t)$ with $\ps\in\nops$ are all distinct. Set
$$\cal M := \left\{\prod_{t=1}^T\m(p_t)| \ps\in\nops\right\}.$$

We show that the conditions of Theorem \ref{thincriteria} apply to the pair $(m_+,\cal M)$. 
In fact, property (i) follows from Lemma \ref{dompath}. Property (ii) is Lemma \ref{excl} and property (iii) is Lemma \ref{sl2inside}. Equation \eqref{e:mainthm1} and the properties of being thin and special follow from Theorem \ref{thincriteria}. Anti-special property follows from Lemma \ref{dompath}.
\end{proof}

As a byproduct we have a description of prime extended snake modules.

\begin{cor}\label{prime}
Let $(i_t,k_t)\in \cal X$, $1\leq t\leq T$, be an extended snake of length $T\in\Z_{\geq 1}$ and $m_+:= \prod_{t=1}^T Y_{i_t,k_t}$. Let $1\leq s<T$ and let $m_1:= \prod_{t=1}^s Y_{i_t,k_t}$, $m_2:= \prod_{t=s+1}^T Y_{i_t,k_t}$. 

Then $L(m_+)=L(m_1)\otimes L(m_2)$ if and only if 
\begin{equation}\label{tensor}
k_{s+1}-k_s\geq 4+2i_s+2i_{s+1} -\delta_{Ni_s} -\delta_{Ni_{s+1}},
\end{equation}
with $k_{s+1}-k_s \equiv 2(i_s-i_{s+1}) -\delta_{Ni_s} -\delta_{Ni_{s+1}}\mod 4$, or
\begin{equation}\label{tensor1}
k_{s+1}-k_s\geq 4N +2 - 2|i_s-i_{s+1}| -\delta_{Ni_s} -\delta_{Ni_{s+1}}, 
\end{equation}
with $k_{s+1}-k_s \equiv 2+2(i_s-i_{s+1}) -\delta_{Ni_s} -\delta_{Ni_{s+1}} \mod 4$.

In particular, the extended snake module is prime if and only if (\ref{tensor}) and \eqref{tensor1} fail for all $s=1,\dots,T-1$.
\end{cor}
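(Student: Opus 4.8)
The plan is to reduce the statement to two $q$-character computations provided by Theorem \ref{mainthm1}: one for $L(m_+)$ and one for $L(m_1)\otimes L(m_2)$. Recall that in the Grothendieck ring $\chi_q(L(m_1)\otimes L(m_2)) = \chi_q(L(m_1))\chi_q(L(m_2))$, and that $L(m_+) = L(m_1)\otimes L(m_2)$ as $\uqh g$-modules if and only if the tensor product is irreducible, which (since $L(m_+)$ appears as the unique subquotient with highest $\ell$-weight $m_+$) is equivalent to $\chi_q(L(m_1))\chi_q(L(m_2)) = \chi_q(L(m_+))$. Both of $(i_t,k_t)_{1\le t\le s}$ and $(i_t,k_t)_{s+1\le t\le T}$ are themselves extended snakes, and the concatenation condition "$(i_{s+1},k_{s+1})$ is in extended snake position to $(i_s,k_s)$" is automatic since $(i_t,k_t)_{1\le t\le T}$ is an extended snake; so Theorem \ref{mainthm1} applies to all three modules and gives
$$
\chi_q(L(m_1)) = \sum_{\ps[1\le t\le s]} \prod_{t=1}^s \m(p_t), \qquad
\chi_q(L(m_2)) = \sum_{\ps[s+1\le t\le T]} \prod_{t=s+1}^T \m(p_t),
$$
while $\chi_q(L(m_+))$ is the sum over globally non-overlapping tuples. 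Multiplying the first two gives the sum over all tuples $(p_1,\dots,p_T)$ such that $(p_1,\dots,p_s)$ and $(p_{s+1},\dots,p_T)$ are separately non-overlapping; by Lemma \ref{monpathcor} (injectivity of $\ps\mapsto\prod\m(p_t)$, which also applies to each of the two sub-snakes), all these monomials are distinct. Hence $L(m_+) = L(m_1)\otimes L(m_2)$ if and only if every tuple that is "block-wise non-overlapping" is in fact globally non-overlapping, i.e. if and only if $\nops$ already contains all such tuples.

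So the combinatorial heart of the proof is: \emph{$p_s'$ can overlap $p_{s+1}'$ for some $p_s'\in\scr P_{i_s,k_s}$, $p_{s+1}'\in\scr P_{i_{s+1},k_{s+1}}$ if and only if neither \eqref{tensor} nor \eqref{tensor1} holds.} The "if" direction (non-overlapping is forced, hence tensor product) is the content of the inequalities \eqref{e:domnonoverlap1}, \eqref{e:domnonoverlap} and the parity conditions \eqref{parity1}, \eqref{parity0} already extracted in the proof of the non-overlapping lemma: when \eqref{tensor} or \eqref{tensor1} holds, one checks directly that for \emph{every} choice of $p_s'\in\scr P_{i_s,k_s}$ and $p_{s+1}'\in\scr P_{i_{s+1},k_{s+1}}$ (not just the highest ones), $p_s'$ lies strictly above $p_{s+1}'$ — the lowest path $p^-_{i_s,k_s}$ is still above the highest path $p^+_{i_{s+1},k_{s+1}}$ under these bounds, since the total vertical extent of a path in $\scr P_{i,k}$ is controlled by $i$ and the center $k$. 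For the "only if" direction I would argue contrapositively: if both \eqref{tensor} and \eqref{tensor1} fail, then — using that $(i_{s+1},k_{s+1})$ is nonetheless in extended snake position to $(i_s,k_s)$ — one constructs an explicit overlapping pair $(p_s',p_{s+1}')$ and extends it to a globally non-overlapping tuple in $\nops$ with the other paths, say, at their highest (this is possible because, by Remark \ref{not transitive} and Lemma \ref{overlapcond}, the only interaction is between consecutive paths, and a pair of consecutive non-overlapping paths can always be slotted between a higher highest path and a lower highest path when the outer snake bounds hold). This tuple produces a monomial in $\chi_q(L(m_1))\chi_q(L(m_2))$ that is \emph{not} in $\chi_q(L(m_+))$, so $\dim L(m_1)\otimes L(m_2) > \dim L(m_+)$ and the tensor product is reducible. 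The final sentence of the corollary ("prime iff \eqref{tensor} and \eqref{tensor1} fail for all $s$") is then immediate: a nontrivial tensor-product decomposition into two extended snake modules exists iff some split point $s$ satisfies one of the two conditions, and conversely any factorization of $L(m_+)$ must split $\cal X(m_+)$ at such a point.

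The main obstacle I anticipate is the "only if" direction, specifically the bookkeeping needed to show that when both concatenation inequalities fail one can genuinely realize an overlap with paths taking the \emph{extremal} shapes allowed by $\scr P_{i_s,k_s}$ and $\scr P_{i_{s+1},k_{s+1}}$, and then fit this into a valid element of $\nops$ without creating an unwanted interaction with $p_{s-1}$ or $p_{s+2}$. The delicate point is the $i_s=N$ or $i_{s+1}=N$ case, where the paths have the "folded" shape through the column $2N-1$ with the $\pm(1+\epsilon)$ steps, the $\delta_{Ni}$-corrections appear, and Lemma \ref{overlapcond}(ii) permits an upper corner of one path to coincide with a lower corner of the next — there one must check that such a coincidence does not accidentally rescue non-overlapping. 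This is exactly the scenario handled inside the proof of Lemma \ref{lowering lemma}, so I would lean on the case analysis and figures there; modulo that, the argument is a direct comparison of the vertical spans of paths against the bounds \eqref{tensor}, \eqref{tensor1}.
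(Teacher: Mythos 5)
Your proposal is correct and takes essentially the same route as the paper: reduce the isomorphism $L(m_+)\cong L(m_1)\otimes L(m_2)$ to the equality of $q$-characters, use Theorem \ref{mainthm1} to express all three as sums over (block-wise or global) non-overlapping tuples, and observe that conditions \eqref{tensor} and \eqref{tensor1} are exactly the condition that \emph{every} path in $\scr P_{i_s,k_s}$ lies strictly above \emph{every} path in $\scr P_{i_{s+1},k_{s+1}}$. (Minor remark: the intermediate claim that the product monomials are all distinct is not a direct consequence of Lemma \ref{monpathcor} and is in fact unnecessary, since your dimension comparison at the end already settles both directions.)
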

\begin{proof}
The inequalities (\ref{tensor}) and \eqref{tensor1} are equivalent to the requirement that
any path  $p\in\mathscr P_{i_s,k_s}$ is strictly above any path $p'\in\mathscr P_{i_{s+1},k_{s+1}}$, in their respective parity cases. The corollary follows from
Theorem \ref{mainthm1}.
\end{proof}

\section{Tame representations}\label{s:tame}

In this section we classify the simple tame finite-dimensional $\uqh g$-modules when $\g$ is of type $B_N$.

\subsection{The case of $N=2$.} 

\begin{prop}
Let $\g$ be of type $B_2$ and let $m_+\in\mathcal P_{\cal X}^+$. The module $L(m_+)$ is tame if and only if $\cal X(m_+)$ is an extended snake.  
\label{B2 case}
\end{prop}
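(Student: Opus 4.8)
\textbf{Proof strategy for Proposition \ref{B2 case}.}

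The plan is to prove both directions separately. The \emph{if} direction is immediate: if $\cal X(m_+)$ is an extended snake, then Theorem \ref{mainthm1} applies directly in type $B_2$, giving that $L(m_+)$ is thin, hence tame. So the real work is the \emph{only if} direction, and here I would mirror the inductive argument used in the proof of Theorem \ref{NT} for $\mathfrak{sl}_N$, except that $N=2$ is the base case and there is no smaller rank to induct on; instead one reduces to the $\uqh{sl}_2$ statement (Lemma \ref{thin sl2}) via the two rank-one diagram subalgebras.

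Concretely, write $m_+ = \prod_{t=1}^T Y_{i_t,k_t}$ with $\cal X(m_+) = (i_t,k_t)_{1\leq t\leq T}$ ordered as prescribed in Section \ref{chi sec}. Put $V = L(m_+)$, and consider the two diagram subalgebras $U_q(\hlie g_{\{1\}})$ and $U_q(\hlie g_{\{2\}})$, each isomorphic to $U_q(\hlie{sl}_2)$ (note in type $B_2$ the node $1$ is long, $r_1 = 2$, and node $2 = N$ is short, $r_2 = 1$). By Lemma \ref{r:tameresfactor}, $\res_{\{j\}}V$ is tame for $j=1,2$, so by Lemma \ref{thin sl2} each is thin; applying Lemma \ref{sl2 monomials} to the monomial $\beta_j(m_+)$ controls the spacing of those $k_t$ with $i_t = j$. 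This already forces a snake-type condition among equal-colour points. The genuinely new phenomenon — and what I expect to be the main obstacle — is controlling the spacing between a point of colour $1$ and a point of colour $2$ (the analogue of the $\{i_t,i_{t+1}\}=\{1,N\}$ case in the proof of Theorem \ref{NT}, which here is \emph{every} mixed adjacency since $N=2$). For this I would argue by contradiction: assume two consecutive points $(i_t,k_t),(i_{t+1},k_{t+1})$ with $\{i_t,i_{t+1}\}=\{1,2\}$ violate both extended snake inequalities (i) and (ii) of Section \ref{s:extsnake}, i.e. $k_{t+1}-k_t$ is too small to be in snake position and also too small to be in the long-range position (ii). Using Lemma \ref{sl2 char}, one produces an explicit non-highest monomial $m = m_+ A_{i_t,k_t+1}^{-1} \in \chi_q(V)$ (just as in the $\mathfrak{sl}_N$ proof), compute it explicitly in each of the sub-cases $i_t=1,i_{t+1}=2$ and $i_t=2,i_{t+1}=1$, and then restrict to the appropriate rank-one subalgebra using Lemma \ref{hws lemma} to exhibit a monomial pattern forbidden by Lemma \ref{sl2 monomials} (the failure $u_{j,b}(m)-u_{j,bq^2}(m)=2$, or $|u_{j,b}(m)|\geq 2$). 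The residue calculation and the congruence bookkeeping (the $\bmod 4$ conditions, the $\delta_{Ni}$ corrections) is where the care is needed, since in $B_2$ the embedding $\iota$ and the doubled spectral parameter for the long node make the combinatorics slightly delicate; but it is a finite check over the few colour-adjacency patterns.

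One subtlety specific to $B_2$ that I would flag and handle: because $N=2$, the long node is $N-1=1$ and the short node is $N=2$, so the "extended" (long-range) condition (ii) of the definition — which in general rank is genuinely needed — here needs to be checked to actually be attainable as a tame case, and conversely non-adjacent violations must be ruled out too, which again follows from the two $\uqh{sl}_2$-restrictions plus Remark \ref{not transitive}. Once every consecutive pair is shown to be in extended snake position, and non-adjacent pairs follow formally, $\cal X(m_+)$ is an extended snake and we are done. I expect the write-up to be short: essentially a transcription of the Theorem \ref{NT} argument with the base case supplied by Lemma \ref{thin sl2} and the mixed-colour case supplied by an explicit $A$-lowering computation checked against Lemma \ref{sl2 monomials}.
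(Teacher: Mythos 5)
Your \emph{if} direction is the same as the paper's, and the overall strategy for \emph{only if} — produce a monomial $m\in\chi_q(V)$ via $A$-lowerings, restrict to a rank-one diagram subalgebra, and read off a violation of Lemma \ref{sl2 monomials} — is also the strategy the paper uses. But there is a genuine gap in your reduction: you assert that restricting to $U_q(\hlie g_{\{1\}})$ and $U_q(\hlie g_{\{2\}})$ ``already forces a snake-type condition among equal-colour points,'' so that the only remaining obstruction is the mixed adjacency $\{i_t,i_{t+1}\}=\{1,2\}$. That is false. When $i_t=i_{t+1}=j$, the restriction $\beta_j(m_+)$ is a product of $Y_{j,k_t}$'s, each corresponding to a $q_j$-string of length $1$; Lemma \ref{sl2 monomials} (or Theorem \ref{NT}) then only forces these parameters to be \emph{distinct}, i.e.\ $k_{t+1}\neq k_t$. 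It does not give the extended snake bound, which for $i_t=i_{t+1}=1$ requires $k_{t+1}-k_t\geq 4$ with $k_{t+1}-k_t\equiv 0\bmod 4$, and for $i_t=i_{t+1}=2=N$ forbids exactly $k_{t+1}-k_t=4$. Thus the same-colour small-gap cases ($k_{t+1}-k_t=2$ for colour $1$; $k_{t+1}-k_t=4$ for colour $2$) survive your rank-one restriction and need the same kind of cross-colour $A$-lowering argument you reserve for the mixed case. The paper handles these as separate cases (i) and (ii) alongside the mixed case (iii).

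A second, smaller issue: even in the mixed case, a single lowering $m_+ A_{i_t,k_t+r_{i_t}}^{-1}$ is not always enough. When $i_t=2$ and $k_{t+1}=k_t+3$, the paper needs to append a whole chain of $A_{1,\cdot}^{-1}$'s whose length depends on \emph{later} variables $Y_{1,k_{t+1}+4j}$ in $m_+$; this is why the paper chooses $t$ \emph{maximal} among offending indices, a device your outline omits. The same maximality is also what makes the same-colour case (i) work. Finally, a notational slip: you write $A_{i_t,k_t+1}^{-1}$, but the correct shift is $A_{i_t,k_t+r_{i_t}}^{-1}$ (so $A_{1,k_t+2}^{-1}$ for the long node), which matters for the congruence bookkeeping you rightly flag as delicate.
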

\begin{proof}
The \emph{if} part follows from Theorem \ref{mainthm1}. We prove the \emph{only if} part. Let $V= L(m_+)$ be tame. 
Let $\cal X(m_+) =(i_t,k_t)_{1\leq t\leq T}$, where $T\in \Z_{\geq 1}$, $(i_t,k_t)\in \cal X$, $1\leq t \leq T$.

By Lemma \ref{r:tameresfactor}, the $U_q(\hlie g_j)$-module $L(\beta_j(m_+))$ is tame, $j=1,2$. Note that $U_q(\hlie g_j)$ is isomorphic to $U_{q_j}(\hlie{sl}_2^{(j)})$. Therefore, by Theorem \ref{NT},
$k_{t+1}\neq k_{t}$ whenever $i_t=i_{t+1}$, $1\leq t\leq T-1$. Hence, it remains to discard the cases
\begin{enumerate}
	\item $i_t = i_{t+1} = 1$ and $k_{t+1} - k_t =2$,
	\item $i_t = i_{t+1} = 2$ and $k_{t+1} - k_t =4$,
	\item $i_{t} \neq i_{t+1}$ and $k_{t+1}-k_{t} \in \{1,3\}$,
\end{enumerate}
where $1\leq t\leq T-1$.

Suppose, by contradiction, $t$ is maximal such that one of the above conditions holds. 
In each case, using Lemma \ref{sl2 char} and \eqref{e:qchev}, we find a monomial $m\in \chi_q(V)$ such that $m$ is $j$-dominant and $u_{j,l}(m)\geq 2$ for some $(j,l)\in \cal X$. By Corollary \ref{c:tameslN}, $L(\beta_j(m))$ is a subfactor of $\res_j V$. By Theorem \ref{NT}, it is not tame, in contradiction with Lemma \ref{r:tameresfactor}.

Suppose that (i) holds. 
Let $r\in \Z_{\geq 0}$ maximal such that $u_{1,k_{t+1}+4j}(m_+)=1$, for all $0\leq j \leq r$. Then 
	$$m =m_+A_{1,k_t+2}^{-1}\left(\prod_{j=0}^{r}A_{1,k_{t+1}+4(r-j)+2}^{-1}\right) \in \chi_q(V).$$
	
One easily checks, cf. \eqref{e:lroots}, that $m$ is $2$-dominant and $u_{2,k_t+3}(m)\geq 2$.
	
Suppose that (ii) holds. 
Then the monomial
	$$m = m_+A_{2,k_t+1}^{-1}A_{1,k_t+3}^{-1}\in \chi_q(V),$$
is $2$-dominant and 
	$u_{2,k_t+4}(m)\geq 2$.
	
Suppose that (iii) holds. 
Set
	$$m = m_+A_{i_t,k_t+r_{i_t}}^{-1} \in \chi_q(V).$$
	Explicitly,
	$$m = \left\{\begin{array}{ll}
	m_+ Y_{1,k_t}^{-1}Y_{1,k_t+4}^{-1}Y_{2,k_t+1}Y_{2,k_t+3} &{\rm if} \quad i_t=1,\\
	m_+ Y_{2,k_t}^{-1}Y_{2,k_t+2}^{-1}Y_{1,k_t+1} &{\rm if} \quad i_t=2.
	\end{array}	\right.$$
If $i_t =1$ then $m$ is $2$-dominant and either $u_{2,k_t+1}(m)\geq 2$ or $u_{2,k_t+3}(m)\geq 2$. 
If $i_t=2$ and $k_{t+1}=k_t+1$
then $m$ is $1$-dominant and $u_{1,k_t+1}(m)\geq 2$. Therefore, it remains to consider the case when $i_t=2$ and $k_{t+1} = k_t+3$. 
Let $r\in \Z_{\geq 0}$ be such that $u_{1,k_{t+1}+4j}(m)>0$, for all $0\leq j \leq r$ and $u_{1,k_{t+1}+4r+4}(m)=0$. Then
	$$m' =m A_{1,k_t+3}^{-1}\left(\prod_{j=0}^{r}A_{1,k_{t+1}+4(r-j)+2}^{-1}\right) \in \chi_q(V),$$
$m'$ is $2$-dominant and $u_{2,k_{t+1}+1}(m')\geq 2$.
\end{proof}

\subsection{Main theorem}

\begin{thm}
Let $\lie g$ be of type $B_N$ and let $m_+\in\mathcal P_{\cal X}^+$. The module $L(m_+)$ is tame if and only if $\cal X(m_+)$ is an extended snake. In particular, all tame irreducible modules are thin, special and anti-special. 
\label{mainthm2}
\end{thm}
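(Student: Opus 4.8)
The plan is to prove Theorem \ref{mainthm2} by induction on $N$, mirroring the structure of the proof of Theorem \ref{NT} in type $A$ and of Proposition \ref{B2 case}. The \emph{if} direction is already supplied by Theorem \ref{mainthm1}, so the content is in the \emph{only if} direction: assuming $V=L(m_+)$ is tame, we must show $\cal X(m_+)=(i_t,k_t)_{1\le t\le T}$ is an extended snake, i.e.\ each consecutive pair $(i_t,k_t),(i_{t+1},k_{t+1})$ is in extended snake position. The base case $N=2$ is Proposition \ref{B2 case}.

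First I would restrict $V$ to the two maximal diagram subalgebras. Let $J=\{1,\dots,N-1\}$, so $U_q(\hlie g_J)\cong U_q(\hat{\lie{sl}}_N)$ (type $A_{N-1}$), and let $K=\{2,\dots,N\}$, so $U_q(\hlie g_K)\cong U_q(\hlie b_{N-1})$ (type $B_{N-1}$). By Lemma \ref{r:tameresfactor}, both $\res_J V$ and $\res_K V$ are tame. Applying Theorem \ref{NT} to $\res_J V$ (after passing to each dominant monomial via Corollary \ref{c:tameslN}, so that each dominant summand of $\chi_q$ restricted to $J$ generates a tame, hence snake, module) forces the type-$A$ snake inequalities on every consecutive pair $(i_t,k_t),(i_{t+1},k_{t+1})$ for which $N\notin\{i_t,i_{t+1}\}$; applying the inductive hypothesis (Theorem \ref{mainthm2} in rank $N-1$) to $\res_K V$ forces the extended-$B_{N-1}$ inequalities on every consecutive pair for which $1\notin\{i_t,i_{t+1}\}$. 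Between these two, the only remaining consecutive pairs to handle are those where $\{i_t,i_{t+1}\}$ "spans" the diagram, i.e.\ one index is near $1$ and the other near $N$ — concretely the pairs not controlled by either subalgebra, which as in Theorem \ref{NT} are governed by the condition $i_t+i_{t+1}>N-2$ together with $i_t$ or $i_{t+1}$ being close to the relevant endpoint. This is exactly the situation flagged in Remark \ref{not transitive}.

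For these remaining pairs I would argue by contradiction: suppose some consecutive pair $(i_t,k_t),(i_{t+1},k_{t+1})$ fails to be in extended snake position, choosing $t$ maximal with this property so that everything to the right of $t$ already forms an extended snake. Using Lemma \ref{sl2 char} applied at the vertex $i_t$ (or $i_{t+1}$) together with the explicit $U_q(\hlie{sl}_2)$ evaluation $q$-character \eqref{e:qchev} — and possibly a short further chain of lowering moves $A_{j,l}^{-1}$ dictated by the monomials of $m_+$ to the right, exactly as in cases (i) and (iii) of Proposition \ref{B2 case} — I would produce an explicit monomial $m\in\chi_q(V)$ which is $j$-dominant for some $j\in I$ and has $u_{j,l}(m)\ge 2$ for some $(j,l)\in\cal X$. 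Then by Lemma \ref{hws lemma} and Corollary \ref{c:tameslN}, $L(\beta_j(m))$ is a subfactor of $\res_j V$; but $\beta_j(m)$ has a square, so the corresponding $U_q(\hlie{sl}_2^{(j)})$-module is not thin, hence not tame (Lemma \ref{thin sl2}), contradicting Lemma \ref{r:tameresfactor}. The final sentence of the theorem (thin, special, anti-special) then follows: the extended snake property plus Theorem \ref{mainthm1} gives all three at once.

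The main obstacle I anticipate is the bookkeeping in the contradiction step when $i_t=N$ or $i_{t+1}=N$ and the gap $k_{t+1}-k_t$ falls into one of the two "bands" of the extended snake condition — i.e.\ distinguishing a genuine failure of condition (i) from the second branch (ii) of extended snake position, since the numerics $2N+2+2|N-i-i'|$ versus $4+2|i'-i|$ behave quite differently near the trivalent vertex. One must verify that \emph{every} gap violating \emph{both} (i) and (ii) really does yield a doubled monomial, and this requires tracking how far the sequence of lowering moves at vertex $1$ (or $N-1$) must propagate before stalling, using Lemma \ref{sl2 monomials} to know precisely when a lowering move is legal. Care is also needed to ensure the monomial $m$ we land on is actually reachable from $m_+$ — i.e.\ that it lies in $\chi_q(V)$ — which is where Lemma \ref{sl2 char} and the maximality of $t$ (guaranteeing the relevant highest-$\ell$-weight condition for $U_q(\hlie g_j)$ via Lemma \ref{hws lemma}) do the essential work.
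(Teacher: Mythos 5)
Your plan has the right skeleton — induction on $N$, base case Proposition~\ref{B2 case}, restriction to the two rank-$(N-1)$ diagram subalgebras to get the snake inequalities away from the endpoints, choosing $t$ maximal, and using Lemma~\ref{sl2 char} with \eqref{e:qchev} to push new monomials into $\chi_q(V)$ — but the contradiction step you describe diverges from the paper's in an essential way, and the gap there is not filled.

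You aim, as in Proposition~\ref{B2 case}, to reach a $j$-dominant monomial with $u_{j,l}(m)\ge 2$ at a \emph{single} vertex $j$ and then invoke $\lie{sl}_2$-thinness (Lemma~\ref{thin sl2}). The paper does something weaker but much more reachable: for general $N$ it reduces only to \emph{rank $N-1$}, producing a $J_\ell$-dominant monomial ($J_1=\{2,\dots,N\}$, $J_2=\{1,\dots,N-1\}$) containing a consecutive pair $(j,l),(j',l')$ that violates the rank-$(N-1)$ (extended) snake condition, and then appeals to the inductive hypothesis (type $B_{N-1}$) or Theorem~\ref{NT} (type $A_{N-1}$) together with Corollaries~\ref{c:tamebN} and \ref{c:tameslN}. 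This matters for two concrete reasons. First, hitting a \emph{squared} variable at one vertex requires chaining lowering moves until two parallel cascades collide; the paper's rank-$(N-1)$ target is typically reached in one or two moves (e.g.\ a single $A_{1,k+2}^{-1}$ in case (ii)(a)), whereas the square may require propagating lowering moves across the whole diagram and verifying legality at each step via Lemma~\ref{sl2 monomials}. You flag this bookkeeping as "the main obstacle" but do not resolve it, and nothing in the proposal shows the cascade always terminates at a square before the $q$-character runs out of room. Second, in the paper's hardest subcase (case (ii)(c), $i'=1$), the argument is a \emph{bootstrap}: the rank-$(N-1)$ inductive hypothesis is first used to pin down $k'-k$ (yielding \eqref{e:i<N rest}), and only then is the contradiction produced. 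Your proposal, which defers all appeals to thinness to the final $\lie{sl}_2$ step, has no mechanism for this intermediate use of the inductive hypothesis, and without it the case analysis you sketch does not close.

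So the approach is a genuinely different (rank-$1$ reduction vs.\ rank-$(N-1)$ reduction) route, and as stated it is incomplete: you would need to exhibit, for every gap $k'-k$ violating both branches of extended snake position, an explicit legal chain of lowering moves ending at a squared variable — including the $i'=1$ case where the paper itself needs the bootstrapped constraint before it can even name the monomial. Adopting the paper's target — a $J_\ell$-dominant monomial whose $\cal X(\beta_{J_\ell}(m))$ contains a bad consecutive pair — lets the rank-$(N-1)$ induction (and Theorem~\ref{NT}) do the heavy lifting and avoids the long cascades entirely.
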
 
Since by Theorem \ref{mainthm2} all irreducible tame representation are special, the following is immediate.
\begin{cor}\label{c:tamebN}
Let $V$ be a tame representation. For each dominant monomial $m\in \cal M(V)$, $L(m)$ is a tame subfactor of $V$. \qed
\end{cor}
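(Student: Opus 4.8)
The plan is to deduce the corollary directly from Theorem \ref{mainthm2} together with the elementary stability properties of tameness recorded in Lemma \ref{r:tameresfactor}; the statement is the $B_N$-analogue of Corollary \ref{c:tameslN} and the argument is the same. I do not expect a genuine obstacle here, since the substantive work has already been carried out in Theorem \ref{mainthm2}.

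First I would fix a composition series of $V$ with irreducible subquotients $L(m_1),\dots,L(m_r)$ for suitable $m_1,\dots,m_r\in\cal P^+$. Since $\chi_q$ is a homomorphism from the Grothendieck ring $\Rep\uqh g$, we have $\chi_q(V)=\sum_{j=1}^r\chi_q(L(m_j))$, and because all the dimensions $\dim(V_m)$, $\dim(L(m_j)_m)$ are nonnegative this yields $\cal M(V)=\bigcup_{j=1}^r\cal M(L(m_j))$. Next, each $L(m_j)$ occurs as a subfactor of the tame module $V$, hence is itself tame by Lemma \ref{r:tameresfactor}; being moreover irreducible, Theorem \ref{mainthm2} applies to it and shows in particular that $L(m_j)$ is special, so that $m_j$ is the unique dominant monomial in $\cal M(L(m_j))$.

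Finally, let $m\in\cal M(V)$ be dominant. By the previous step there is an index $j$ with $m\in\cal M(L(m_j))$, and since $m$ is dominant while $m_j$ is the only dominant monomial of $\cal M(L(m_j))$, we conclude $m=m_j$. Hence $L(m)\cong L(m_j)$ appears as one of the composition factors of $V$, so it is a subfactor of $V$, and it is tame once more by Lemma \ref{r:tameresfactor}. This is exactly the assertion of the corollary. The only point needing a word of care is the trivial bookkeeping that a composition factor is literally a subfactor $W'/W''$ with $W''\subseteq W'\subseteq V$ submodules, in the sense used in Lemma \ref{r:tameresfactor}; this is immediate from the definition of a composition series, and is the mildest of the steps above.
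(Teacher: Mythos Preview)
Your argument is correct and is exactly the intended one: the paper simply says the corollary is ``immediate'' from the fact (Theorem \ref{mainthm2}) that every irreducible tame module is special, and you have spelled out the routine composition-series argument behind that word, just as in the $A_N$ analogue (Corollary \ref{c:tameslN}). Nothing further is needed.
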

\begin{proof} We prove Theorem \ref{mainthm2} by induction on $N$. 

The case of $N=2$ is given by Proposition \ref{B2 case}. 

Let $N\geq 3$. Let $V= L(m_+)$ and let
$\cal X(m_+) = (i_t,k_t)_{1\leq t \leq T}, \ T\in \Z_{\geq 1}$.

Let $J_1=\{2,3,\ldots, N\}$ and $J_2=\{1,2,\ldots, N-1\}$ be subsets of $I$. Consider the subalgebras $U_q(\hlie g_{J_1})$ and $U_q(\hlie g_{J_2})$ of $\uqh g$, these subalgebras are isomorphic to quantum affine algebras of type $B_{N-1}$ and $A_{N-1}$, respectively. 

By Lemma \ref{r:tameresfactor}, $\L(\beta_{J_1}(m_+))$ is a subfactor of $\res_{J_1} V$, and by induction hypothesis,
\begin{equation}
k_{t+1}-k_{t}\geq 4+2|i_{t+1}-i_{t}| -\delta_{Ni_{t+1}} - \delta_{Ni_{t}} \quad {\rm if}\quad k_{t+1} - k_t \equiv 2|i_{t+1}-i_{t}| -\delta_{Ni_{t+1}} - \delta_{Ni_{t}}\mod 4,
\label{case even restriction}
\end{equation}
or
\begin{equation}
k_{t+1} -k_t\geq 2N + 2|N-i_{t+1}-i_{t}+1|- \delta_{Ni_{t+1}} - \delta_{Ni_{t}} 
\label{case odd restriction}
\end{equation}
for all $t$, $1\leq t \leq T-1$, such that $1\notin\{i_t,i_{t+1}\}$.

Similarly, $L(\beta_{J_2}(m_+))$ is a subfactor of $\res_{J_2} V$, and, hence, by Theorem \ref{NT}, (\ref{case even restriction}) holds 
for all $t$, $1\leq t \leq T-1$, such that $N\notin\{i_t,i_{t+1}\}$.

Suppose, by contradiction, that  $(i_{t+1}, k_{t+1})$ is not in extended snake position to $(i_t,k_t)$ for some $t$, $1\leq t \leq T-1$.
Without loss of generality we can assume that $(i_{s+1}, k_{s+1})$ is in extended snake position to $(i_s,k_s)$ for $t<s\leq T-1$.
For convenience, denote $$(i,k) = (i_t,k_t)\quad {\rm and} \quad (i',k') = (i_{t+1}, k_{t+1}).$$ We divide the argument in the following two cases. 
\begin{enumerate}
\item $k' - k \equiv 2|i'-i| -\delta_{Ni'} - \delta_{Ni}\mod 4$,
\item $k' - k \equiv 2+ 2|i'-i| -\delta_{Ni'} - \delta_{Ni}\mod 4$.
\end{enumerate}

In each case, using Lemma \ref{sl2 char} and \eqref{e:qchev}, we find a $J_{\ell}$-dominant monomial $m\in \chi_q(V)$, for some $\ell =1,2$, and consecutive points $(j,l), (j',l')\in \cal X(\beta_{J_\ell}(m))$, $l'\geq l$, such that one of the following holds:
\begin{itemize}
\item $\ell =1$ and $(j',l')$ is not in extended snake position to $(j,l)$ with respect to the algebra $U_q(\hlie g_{J_1})$,
\item $\ell = 2$ and $(j',l')$ is not in snake position to $(j,l)$ with respect to the algebra $U_q(\hlie g_{J_2})$.
\end{itemize}
By induction hypothesis and Corollary \ref{c:tamebN}, if $\ell=1$, and by Theorem \ref{NT} and Corollary \ref{c:tameslN}, if $\ell=2$, $L(\beta_{J_{\ell}})$ is a subfactor of $\res_{J_{\ell}} V$ which is not tame, thus a contradiction with Lemma \ref{r:tameresfactor}. 

Let (i) hold. By \eqref{case even restriction}, it remains to consider the case $\{i,i'\}=\{1,N\}$. Our assumption implies $0\leq k' - k < 2 +2|N - 1|.$
By the definition of $\cal X$ it is equivalent to 
$$1\leq k'-k\leq 2N -3.$$
Therefore, 
$$m=m_+A_{i,k+r_i}^{-1} \in \chi_q(V).$$
If $i=1$, then $m$ is $J_1$ dominant and let $(j,l) = (2,k+2)$ and $(j',l') = (N,k')$. 
If $i=N$, then $m$ is $J_2$-dominant and let $(j,l)= (N-1,k+1)$ and $(j',k') = (1,k')$. In each case we have that 
$u_{j,l}(m)=1$, $u_{j',l'}(m)=1$ and
\begin{equation*}
-1\leq l'-l \leq 2|j'-j|.
\end{equation*}
This finishes the case (i).

Let (ii) hold. Our assumption implies that $$0\leq k'-k < 2N + 2 + 2|N-i-i'| - \delta_{Ni} - \delta_{Ni'}.$$ By the definition of $\cal X$ it is equivalent to
\begin{equation}
0\leq k'-k \leq 2N -2 + 2|N-i'-i|-\delta_{Ni'} - \delta_{Ni}.
\label{assumption eq}
\end{equation} We split the argument further in the following subcases:
\begin{description}
	\item[a] $i'\neq 1$ and $i=1$,
	\item[b] $i'\neq 1$ and $i\neq 1$,
	\item[c] $i'=1$.
\end{description}

Consider the case $\gbr a$. Set
$$m= m_+A_{1,k+2}^{-1} \in \chi_q(V).$$
Let $(j,l) = (2,k+2)$ and $(j',l') = (i',k')$. Then 
$m$ is $J_1$-dominant, $u_{j,l}(m)=1$, $u_{j',l'}(m)=1$. Moreover, \eqref{assumption eq} implies that 
$$-2\leq l'-l \leq 2N-4 +2|N -j' -j +1| - \delta_{Nj'},$$
completing the proof in this subcase.

Consider the case $\gbr b$. By (\ref{case odd restriction}) and (\ref{assumption eq}) it follows that
\begin{equation}
2N + 2|N-i'-i+1|- \delta_{Ni'} - \delta_{Ni}\leq k'-k \leq 2N -2 +2|N-i'-i| -\delta_{Ni'}- \delta_{Ni}.
\label{eqcase1}
\end{equation}
If $N-i'-i\geq 0$, there is no $k$ and $k'$ satisfying (\ref{eqcase1}).
On the other hand, if $N-i'-i< 0$, then (\ref{eqcase1}) is equivalent to 
\begin{equation}
k'-k = 2i' +2i -2 -\delta_{Ni'}- \delta_{Ni}.
\label{eqcase1a}
\end{equation} 
In particular, \eqref{eqcase1a} implies 
\begin{equation*}
m= m_+ \prod_{j=0}^{i-1}A_{i-j,k+r_i +2j}^{-1}\in \chi_q(V),
\end{equation*}
Explicitly, 
\begin{equation*}
m=\left\{\begin{array}{ll}
m_+Y_{i,k}^{-1}Y_{i+1,k+2}Y_{1, k+2+2i}^{-1} & {\rm if}\quad i\leq N-2,\\
m_+Y_{N-1,k}^{-1}Y_{N,k+1}Y_{N,k+3}Y_{1, k+2+2i}^{-1} & {\rm if}\quad i=N-1,\\
m_+Y_{N,k}^{-1}Y_{N,k+4}Y_{1, k+1+2i}^{-1} & {\rm if}\quad i= N.
\end{array}
\right.
\end{equation*}
Let
$$(j,l)= \begin{cases} (i+1,k+2+\delta_{N,i+1}) & {\rm if}\ i<N,\\ (N, k+4) &{\rm if}\  i=N,
\end{cases}\quad {\rm and}\quad (j',l') = (i',k').$$
One readily checks that $m$ is $J_1$-dominant, $u_{j,l}(m)=1$, $u_{j',l'}(m)=1$ and, by \eqref{eqcase1a}, it follows that
$$l'-l =2N-4 + 2|N-j'-j+1| - \delta_{Nj'} - \delta_{Nj},$$
completing the proof in this subcase.

Consider the case $\gbr c$.
Let $r\in \Z_{\geq 0}$ be such that $(1,k'+4j)\in \cal X(m_+)$ for all $0\leq j\leq r$ and $(1,k'+4r+4)\not\in \cal X(m_+)$. Then
\begin{equation*}
m'=\begin{cases}
m_+\prod_{j=0}^{r}A_{1,k'+4(r-j)+2}^{-1}\in \chi_q(V) & {\rm if}\ i> 1,\\
m_+A_{1,k+2}^{-1}\prod_{j=0}^{r}A_{1,k'+4(r-j)+2}^{-1}\in \chi_q(V) & {\rm if}\ i=1.
\end{cases}
\end{equation*}
Let $$(j,l) = \begin{cases}
(i,k)& {\rm if}\ i>1,\\
(2,k+2) & {\rm if}\ i=1,\\
\end{cases}\quad {\rm and} \quad (j',l') = (2,k'+2).$$

One easily checks that $m$ is $J_1$-dominant, $u_{j,l}(m')=1$ and $u_{j',l'}(m')=1$. 
By induction hypothesis, it follows that 
$$l'-l \geq 2N + 2|N-j'-j+1| -\delta_{Nj}.$$
Equivalently,
\begin{equation}
\begin{cases}
k'-k  \geq 2N -2+ 2|N-i-2+1| -\delta_{Ni}& {\rm if}\ i>1,\\
k'-k \geq 4N -6 & {\rm if}\ i=1.
\end{cases}
\label{eqcase2}
\end{equation}
Relations (\ref{assumption eq}) and (\ref{eqcase2}) together are equivalent to 
\begin{equation}\label{e:i<N rest}k'-k= 2N -2 + 2|N-i-1| - \delta_{Ni}.
\end{equation}

If $i<N$ then \eqref{e:i<N rest} implies that 
$$m= m_+ \left(\prod_{j=0}^{N-i}A_{i+j, k + 2 +2j}^{-1}\right)A_{N,k+2(N-1-i)}^{-1}\in \chi_q(V).$$
Explicitly, $$m=m_+Y_{i,k}^{-1}Y_{i-1, k+2}Y_{N-1,k+2(N-1-i)+2}Y_{N,k+2(N-1-i)+3}^{-1}Y_{N,k+2(N-1-i)+1}^{-1}.$$
Let $(j,l) = (N-1,k+2(N-i))$ and $(j',l') = (1,k')$. Then $m$ is $J_2$-dominant, $u_{j,l}(m)=1$, $u_{j',l'}(m)=1$ and
\begin{eqnarray*}
l'-l =  2|j'-j|.
\end{eqnarray*}

If $i=N$ then \eqref{e:i<N rest} implies that 
$$m = m_+\prod_{j=0}^{N-1}A_{N-j,k+1+2j}^{-1}\prod_{j=0}^{r}A_{1,k'+4(r-j)+2}^{-1} \in \chi_q(V).$$
Let $(j,l) = (N,k+4)$ and $(j',l') = (2,k'+2)$. One easily checks that $m$ is $J_1$-dominant, $u_{j,l}(m)=1$, $u_{j',l'}(m)=1$.
Moreover, \eqref{e:i<N rest} implies that
\begin{eqnarray*}
l'-l = 2N -4 + 2|N-j'-j+1| - 1.
\end{eqnarray*}
This completes the proof in this subcase. The proof of Theorem \ref{mainthm2} is finished.
\end{proof}

\section{Tableaux description of snake modules}\label{s:tableaux}

In this section we define a bijection between super standard skew Young tableaux and paths of some associated snake.  

\subsection{Combinatorial properties of non-overlapping paths}

Define a set $\Alp$ (the \emph{alphabet}) equipped with a total ordering $<$ (\emph{alphabetical ordering}) as follows:
$$\Alp := \{1,2,\dots,N,0,\ol N,\dots, \ol 2,\ol 1 \}, \quad 1<2<\dots<N<0<\ol N<\dots< \ol 2<\ol 1.$$
Given a subset $B\subset\Alp$, assume $B = \{a_1 \leq a_2 \leq \ldots \leq a_m\}$ for some $m\in \Z_{\geq 0}$. For $k\in \Z_{\geq 0}$ define the subsets of $B$
\begin{equation}\label{e:alphrest}
^{[k]}B:= \{a_{k+1}, a_{k+2}, \ldots, a_m\} \quad {\rm and}\quad B^{[k]}:= \{a_1, a_2, \ldots, a_{m-k}\}.
\end{equation}
We understand $^{[k]}B = B^{[k]} = \emptyset$, if $k\geq m$.

Let $(i,k)\in \cal X$ and $p\in \scr P_{i,k}$. 

If $i=N$ and $p =: ((x_r,y_r))_{0\leq r \leq N}$, define 
$$R_p:= \{r | 1\leq r\leq N,\ y_r - y_{r-1}>0\}\subseteq \Alp,\quad {\rm and }\quad \ol R_p := \{\ol r | 1\leq r\leq N,\ y_r - y_{r-1}<0\}\subseteq \Alp.$$ 
If $i<N$, recall that $p$ is given by a pair $(a,\ol a)$ where
$$a\in \scr P_{N, k-(2N-2i-1)}\quad {\rm and}\quad \ol{a} \in \scr P_{N,k+(2N-2i-1)}.$$ 
Let
$$a =: ((x_r, y_{r}))_{0\leq r\leq N}\quad {\rm and} \quad \ol{a} =: ((\ol x_r, \ol y_{r}))_{0\leq r\leq N},$$
and define
$$R_p := \{r | 1\leq r\leq N,\ \ol y_r - \ol y_{r-1}<0\}\subseteq \Alp, \quad {\rm and} \quad \ol{R_p} := \{\ol{r} | 1\leq r\leq N,\ y_r - y_{r-1}>0\} \subseteq \Alp.$$
If $i<N$, we also define
$$S_p := \{r | 1\leq r\leq N,\ y_r - y_{r-1}<0\}\subseteq \Alp, \quad {\rm and} \quad \ol{S_p} := \{\ol{r} | 1\leq r\leq N,\ \ol{y}_r - \ol{y}_{r-1}>0\} \subseteq \Alp.$$
Note that 
$$S_p = \{r | 1\leq r\leq N,\ \ol{r} \notin \ol{R_p}\}\quad {\rm and} \quad \ol{S_p} = \{\ol{r} | 1\leq r\leq N,\ r\notin R_p\}.$$
Clearly $p$ is completely described by the pair of sets $R_p, \ol R_p$, and equally so by $S_p, \ol S_p$, when $i<N$. 

{\bf Example.}
If $p=p_{i,k}^+$, for some $(i,k)\in \cal X$, $i< N$, then $S_p = \{1,2,\ldots, i\}$, $\ol{S_p} = \emptyset$, $R_p= \{1,2,\ldots, N\}$ and $\ol{R_p} = \{\ol{i+1}, \ldots, \ol N\}$. \endit

We denote cardinality of a finite set $A$ by $\#A$.
The next lemma follows from the definition of paths.

\begin{lemma}\label{l:halfpathsN}
Let $(i,k)\in \cal X$ and let $p\in \scr P_{i,k}$. Then
\begin{equation}\label{e:halfpathsN}
\# R_p + \#\ol R_p \geq 2N -i \quad {\rm and}\quad \#S_p + \#\ol S_p \leq i.
\end{equation}\qed
\end{lemma}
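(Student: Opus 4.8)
The plan is to unwind the definitions of $R_p, \ol R_p, S_p, \ol S_p$ and count. First I would treat the case $i = N$. Here $p = ((x_r,y_r))_{0\le r\le N}$ has $N$ steps, each of which satisfies $y_r - y_{r-1} \in \{2,-2\}$ for $1\le r\le N-1$ and $y_N - y_{N-1}\in\{1+\epsilon, -1-\epsilon\}$; in particular every step is either strictly increasing or strictly decreasing, so each index $r\in\{1,\dots,N\}$ contributes exactly one element to $R_p \sqcup \ol R_p$. Hence $\#R_p + \#\ol R_p = N = 2N - i$, which gives the first inequality (in fact with equality) for $i=N$. The second inequality is vacuous when $i = N$ since $S_p, \ol S_p$ are only defined for $i<N$, so nothing is claimed there.

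Next I would handle $i < N$. Recall $p$ is encoded by a pair $(a, \ol a)$ with $a\in \scr P_{N, k-(2N-2i-1)}$ and $\ol a \in \scr P_{N, k+(2N-2i-1)}$, say $a = ((x_r,y_r))$, $\ol a = ((\ol x_r, \ol y_r))$. As in the first case, each of $a$ and $\ol a$ has $N$ steps, each strictly monotone. So for $a$: exactly one of $y_r - y_{r-1} > 0$ or $y_r - y_{r-1} < 0$ holds for each $r$; the former indices form $\ol{S_p}$ (after barring) and contribute to $\ol{R_p}$ as well... more precisely, $\ol{R_p} = \{\ol r : y_r - y_{r-1} > 0\}$ and $S_p = \{r : y_r - y_{r-1} < 0\}$, so $\#\ol{R_p} + \#S_p = N$. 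Similarly for $\ol a$: $R_p = \{r : \ol y_r - \ol y_{r-1} < 0\}$ and $\ol{S_p} = \{\ol r : \ol y_r - \ol y_{r-1} > 0\}$, so $\#R_p + \#\ol{S_p} = N$. Adding, $\#R_p + \#\ol R_p + \#S_p + \#\ol S_p = 2N$. Then I would invoke the observed identities $S_p = \{r : \ol r \notin \ol{R_p}\}$ and $\ol{S_p} = \{\ol r : r \notin R_p\}$ — these say $\#S_p = N - \#\ol{R_p}$ and $\#\ol{S_p} = N - \#R_p$, i.e. precisely the two relations just derived, so this is consistent. The missing ingredient that forces the inequalities (rather than just the sum being $2N$) is the constraint linking $a$ and $\ol a$: the definition of $\scr P_{i,k}$ requires $a_N - \ol a_N = (0,y)$ with $y > 0$, i.e. the endpoint of $a$ lies strictly above the endpoint of $\ol a$.

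The main obstacle, and the heart of the argument, is translating that endpoint condition $y_N > \ol y_N$ into the bound $\#S_p + \#\ol S_p \le i$ (equivalently $\#R_p + \#\ol R_p \ge 2N - i$, using the sum $= 2N$). I would argue as follows: $a$ starts at height $y_0 = (k - (2N-2i-1)) + 2N - 1 = k + 2i$ and $\ol a$ starts at $\ol y_0 = (k + (2N-2i-1)) + 2N - 1 = k + 4N - 2i - 2$, so $\ol y_0 - y_0 = 4N - 4i - 2 = 2(2N - 2i - 1)$. Each increasing step of $a$ among the first $N-1$ raises $y$ by $2$ and each decreasing step lowers it by $2$; the last step changes it by $\pm(1+\epsilon)$. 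So $y_N - y_0 = 2(\#R_p' - \#\ol{\text{(decreasing)}})$ roughly — more carefully, writing $u = \#\{\text{increasing steps of }a\}$, $d = N - u = \#\{\text{decreasing steps of }a\}$, one gets $y_N = y_0 + 2u - 2d + (\text{sign})\epsilon$ with appropriate bookkeeping of the $\epsilon$ from step $N$; and similarly for $\ol a$. Note $d = \#S_p$ (the decreasing steps of $a$ index $S_p$) and by symmetry $\#\ol S_p$ counts increasing steps of $\ol a$. Plugging into $y_N > \ol y_N$ and simplifying the resulting linear inequality in $u, d$ and the analogous counts for $\ol a$, together with $\ol y_0 - y_0 = 2(2N-2i-1)$, should yield exactly $\#S_p + \#\ol S_p \le i$; one must be careful with the half-integer $\epsilon$ contributions, but since all other quantities are even integers the $\epsilon$ terms either cancel or pin down a strict-versus-weak distinction that lands on the stated bound. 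I would present this computation compactly, deriving the single inequality and noting the other is its complement under the sum-$2N$ identity, and remark that equality is achieved by the highest path as in the Example.
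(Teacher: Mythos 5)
Your proof is correct and is in fact the (only) natural argument; the paper gives no proof at all, simply asserting the lemma ``follows from the definition of paths,'' so you have supplied the intended unwinding of the definitions. The observations that $\#\ol R_p + \#S_p = N$ and $\#R_p + \#\ol S_p = N$ (making the two inequalities complementary via the sum $= 2N$), and that the non-trivial content is the endpoint condition $y_N > \ol y_N$ combined with $\ol y_0 - y_0 = 2(2N-2i-1)$, are exactly right.

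One small comment: the final step you leave as ``should yield.'' To close it, write $y_N - y_0 = 2(U_a - D_a) + \delta_a$ with $U_a = \#\ol R_p$, $D_a = \#S_p$, $U_a + D_a = N$, and $|\delta_a| = 1-\epsilon$ (and analogously for $\ol a$). Then $y_N > \ol y_N$ becomes $4(U_a - U_{\ol a}) > (4N-4i-2) + (\delta_{\ol a} - \delta_a)$, whose right-hand side lies strictly in the open interval $(4N-4i-4,\ 4N-4i)$; since the left-hand side is a multiple of $4$, it must be $\geq 4N-4i$, i.e.\ $U_a - U_{\ol a} \geq N-i$, which is exactly $\#S_p + \#\ol S_p = (N-U_a) + U_{\ol a} \leq i$. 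This is the ``$\epsilon$-bookkeeping'' you anticipated, and it does land on the stated bound.
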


Let $(i',k')\in \cal X$ be in snake position to $(i,k)\in \cal X$. We say that $(i',k')$ and $(i,k)$ are {\it shifted by $\sigma\in \Z_{\geq 0}$} if
$$k'-k = 4 + 2|i'-i| + 4\sigma - \delta_{Ni} - \delta_{Ni'}.$$
Observe that $\sigma = 0$ corresponds to the minimal snake position. If $(i_t,k_t)_{1\leq t \leq T}$, $T\in \Z_{\geq 1}$ is a snake, we denote by $\sigma_t$ the shift between $(i_t,k_t)$ and $(i_{t+1},k_{t+1})$.

\begin{lemma}\label{l:halfpaths}
Let $(i',k')\in \cal X$ be in snake position to $(i,k)\in \cal X$ shifted by $\sigma\in \Z_{\geq 0}$.
Let $p\in \scr P_{i,k}, p'\in \scr P_{i',k'}$. 
If $p$ is strictly above $p'$ then
\begin{equation}\label{e:halfpaths}
\#\ol R_p + \# R_{p'}\leq 2N - i + \max\{i-i',0\} + \sigma, 
\end{equation}
\begin{equation}\label{e:olRnops}
\#\ol R_p^{[\sigma + \max\{i'-i, 0\}]} \cap\{\ol 1,\ldots, \ol r\} \leq \#  \ol R_{p'}\cap \{\ol 1 ,\ldots, \ol r\}, \quad r =1,\ldots, N-1,
\end{equation}
\begin{equation}\label{e:Rnops}
\# R_p \cap\{ 1,\ldots, r\} \geq \# ^{[\sigma + \max\{i-i', 0\}]} R_{p'}\cap \{ 1,\ldots, r\}, \quad r =1,\ldots, N-1.
\end{equation}
\end{lemma}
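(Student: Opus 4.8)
The plan is to read all three inequalities off the single hypothesis that $p$ lies strictly above $p'$, by passing from paths to the sign-patterns of their steps.

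First I would set up the coordinate bookkeeping. Each half-path $q\in\scr P_{N,\ell}$ is determined by its sequence of steps, all of size $2$ except the one adjacent to the central column $2N-1$, which has size $1+\epsilon$; the height of its $r$-th vertex equals $\ell+2N-1$ plus $2$ times (number of up-steps minus number of down-steps among the first $r$ steps), adjusted by $\pm\epsilon$ near the centre. For $i<N$ a path $p\in\scr P_{i,k}$ is the concatenation of such a half-path $a\in\scr P_{N,k-(2N-2i-1)}$ (whose up-steps are recorded by $\ol R_p$ and down-steps by $S_p$) and the reversal of $\ol a\in\scr P_{N,k+(2N-2i-1)}$ (whose down-steps are recorded by $R_p$ and up-steps by $\ol S_p$); for $i=N$ the path is a single half-path recorded by $R_p,\ol R_p$. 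Thus the quantities appearing in \eqref{e:halfpaths}--\eqref{e:Rnops}, and their truncations $\ol R_p^{[\cdot]}$, ${}^{[\cdot]}R_{p'}$ intersected with an initial segment $\{\ol 1,\dots,\ol r\}$ or $\{1,\dots,r\}$, are literally affine functions of the partial vertex-heights of the half-paths constituting $p$ and $p'$.

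Second, I would unwind ``strictly above''. Since the point-set of $p$ is the union of the point-sets of its two halves (one lying over the columns $\le 2N-1$, the other over the columns $\ge 2N-1$), and similarly for $p'$, the hypothesis says precisely that each half of $p$ lies strictly above whichever half of $p'$ occupies the same side of the central column. Which half pairs with which, and what the difference of their base-heights is, is determined by the residues of $k,k'$ modulo $4$ (equivalently, by $i,i'$ and the two components of $\cal X$) together with the snake-position relation $k'-k=4+2|i'-i|+4\sigma-\delta_{Ni}-\delta_{Ni'}$; this is a short finite case analysis, and it is exactly what produces the terms $\sigma$ and $\max\{\pm(i-i'),0\}$.

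Third, in each case I would compare the paired half-paths vertex by vertex. Evaluating ``height of a vertex of a half of $p$ is less than the height of the same-index vertex of the paired half of $p'$'' and substituting the formulas of the first step, the non-$\epsilon$ part becomes $4$ times a difference of step-counts offset by the (known, integer) difference of base-heights; since the remaining $\epsilon$-terms have absolute value $<4$, this rounds to an exact inequality between the relevant truncated step-counts, yielding \eqref{e:olRnops} from the $a$-halves and \eqref{e:Rnops} from the $\ol a$-halves, the truncation parameter $\sigma+\max\{\pm(i-i'),0\}$ being forced because that many leading up- (resp. down-) steps of the relevant half of $p'$ must occur before it can reach the band already occupied by the corresponding half of $p$. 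Reading the same comparison at the outermost shared column (equivalently, letting $r$ run to the end) and combining with Lemma \ref{l:halfpathsN} gives the cardinality bound \eqref{e:halfpaths}. The main obstacle is purely organizational: keeping the case analysis of the second step straight --- which half of $p$ is paired with which half of $p'$, the reversal of the $\ol a$-halves, and the single $1+\epsilon$-step of each half sitting against the central column, which is what accounts for the $\delta_{N\cdot}$ corrections and for the appearance of the truncations rather than plain $R$'s in \eqref{e:olRnops} and \eqref{e:Rnops}. Once the correct pairing and base-height differences are recorded in each case, every one of the three inequalities drops out of the corresponding family of vertex comparisons.
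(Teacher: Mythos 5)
Your plan for \eqref{e:olRnops} and \eqref{e:Rnops} matches the paper in substance: the paper also compares heights at a single column (arguing by contradiction at the first $r$ where the inequality would fail, then exhibiting a point of $p$ lying at or below a point of $p'$ in the same column), and the case analysis over which half of $p$ pairs with which half of $p'$ is indeed governed by the parities and the $\delta_{N\cdot}$ corrections, exactly as you say.

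Where your proposal goes wrong is the derivation of \eqref{e:halfpaths}. You claim it follows by ``letting $r$ run to the end'' in \eqref{e:olRnops}--\eqref{e:Rnops} and combining with Lemma~\ref{l:halfpathsN}, but this cannot work for two structural reasons. First, \eqref{e:olRnops} and \eqref{e:Rnops} relate $\ol R_p$ to $\ol R_{p'}$ and $R_p$ to $R_{p'}$, that is, barred--to--barred and unbarred--to--unbarred (steps of the $a$-halves to each other, and of the $\ol a$-halves to each other). But \eqref{e:halfpaths} is a bound on the \emph{cross} term $\#\ol R_p + \# R_{p'}$, which mixes an $a$-half of $p$ with the $\ol a$-half of $p'$; no combination of the same-kind inequalities produces this. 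Second, Lemma~\ref{l:halfpathsN} gives the lower bound $\#R_p + \#\ol R_p \geq 2N-i$, which is the wrong direction for proving an upper bound like \eqref{e:halfpaths}: substituting it into, say, $\#R_{p'} \leq \#R_p + \sigma + \max\{i-i',0\}$ only yields $\#\ol R_p + \#R_{p'} \leq \#\ol R_p + \#R_p + \sigma + \max\{i-i',0\}$, where the right side has no a priori upper control. The paper's actual argument for \eqref{e:halfpaths} is a separate comparison at the central column $2N-1$: it parametrizes the \emph{maximum} height $y$ of $p$ there (determined by the $a$-half, hence by $\#\ol R_p = N-i+s$) and the \emph{minimum} height $y'$ of $p'$ there (determined by the $\ol a'$-half, hence by $\#R_{p'}=N-s'$), then uses the non-overlap condition $y<y'$ together with $k'-k = 4+2|i'-i|+4\sigma - \delta_{Ni}-\delta_{Ni'}$ to get $s-s' \leq \sigma + \max\{i-i',0\}$. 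This is consistent with your ``compare heights per column'' philosophy but is genuinely a third comparison (between the top of one path and the bottom of the other at the column where the two halves of each path meet), not a limiting case of \eqref{e:olRnops}--\eqref{e:Rnops}. You would need to add this argument to fill the gap.
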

\begin{proof}
Let $y\in \Z \pm \epsilon$ be such that $(2N-1,y) \in p$ and $(2N-1,z)\not\in p$ for $z>y$. Let $y'\in \Z \pm \epsilon$ be such that $(2N-1,y')\in p'$ and $(2N-1,z)\not\in p'$ for $z<y$. 

Let $s,s'$ such that $0\leq s\leq i$, $0\leq s'\leq i'$,
$$4(s-1) \leq y-k-2(N-i+1) + \delta_{Ni} \leq 4s\quad {\rm and } \quad 4(s'-1) \leq y'-k'-2(N-i'+1) + \delta_{Ni'} \leq 4s'.$$ 

Since $p$ and $p'$ do not overlap, we have $y'>y$. It follows that $s-s'\leq \sigma + \max\{i-i', 0\}$.
We also have $\#R_p' = N-s'$ and $\ol R_p = N -i+s$.
Therefore, \eqref{e:halfpaths} follows. 

Suppose that $r$ is the minimal integer such that \eqref{e:olRnops} does not hold. Let $s =\# \ol R_{p'}\cap \{\ol 1, \ldots, \ol r\}$. Then 
$$\# \ol R_{p}\cap \{\ol 1, \ldots, \ol r\} = s+ \sigma + \max\{i'-i,0\}+1.$$
In particular, it follows that 
$$\iota (r,k + 2i -\delta_{Ni} + 4(s+ \sigma + \max\{i-i',0\}+1) -2r) \in p \quad {\rm and} \quad \iota(r, k'+2i' - \delta_{Ni'} + 4s -2r)\in p'.$$
Then paths $p$ and $p'$ overlap at $r$, and hence, a contradiction.

Equation \eqref{e:Rnops} is proved similarly.
\end{proof}

\begin{cor}
\label{r:halfpaths} 
Let $(i',k')\in \cal X$ be in snake position to $(i,k)\in \cal X$ shifted by $\sigma\in\Z_{\geq 0}$. Let $p\in \scr P_{i,k}$, $p'\in \scr P_{i',k'}$ and assume $i,i'< N$. If $p$ and $p'$ do not overlap, then
\begin{equation}\label{e:halfpaths2}
\# S_p + \# \ol S_{p'}\geq  i - \max\{i-i',0\} - \sigma,
\end{equation}
\begin{equation}\label{e:Snops}
\#S_p \cap\{1,\ldots, r\} \geq \# ^{[\sigma + \max\{i'-i, 0\}]} S_{p'}\cap \{1,\ldots, r\}, \quad r =1,\ldots, N-1,\ {\rm and}
\end{equation}
\begin{equation}\label{e:olSnops}
\#\ol S_p^{[\sigma + \max\{i-i', 0\}]} \cap\{\ol 1,\ldots,\ol r\} \leq \# \ol S_{p'}\cap \{\ol 1,\ldots, \ol r\}, \quad r =1,\ldots, N-1.
\end{equation}\qed
\end{cor}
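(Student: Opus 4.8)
Here is a plan for proving Corollary \ref{r:halfpaths}.

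The strategy is to derive each of the three inequalities \eqref{e:halfpaths2}, \eqref{e:Snops}, \eqref{e:olSnops} from, respectively, the inequalities \eqref{e:halfpaths}, \eqref{e:olRnops}, \eqref{e:Rnops} of Lemma \ref{l:halfpaths}, using nothing beyond the complementation relations $S_p=\{r\mid 1\le r\le N,\ \ol r\notin\ol R_p\}$ and $\ol S_p=\{\ol r\mid 1\le r\le N,\ r\notin R_p\}$ recorded above (together with the same relations for $p'$). These hold because each of the first $N$ steps of a half-path is strictly increasing or strictly decreasing, so for every $r\in\{1,\dots,N\}$ exactly one of the conditions ``$r\in S_p$'' and ``$\ol r\in\ol R_p$'' is true, and similarly for the barred half-path $\ol a$. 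In particular $\#S_p=N-\#\ol R_p$ and $\#\ol S_{p'}=N-\#R_{p'}$.

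For \eqref{e:halfpaths2} I would just add the last two cardinality identities: $\#S_p+\#\ol S_{p'}=2N-\#\ol R_p-\#R_{p'}$, and then substitute the bound \eqref{e:halfpaths} to get $\#S_p+\#\ol S_{p'}\ge 2N-(2N-i+\max\{i-i',0\}+\sigma)=i-\max\{i-i',0\}-\sigma$, which is exactly \eqref{e:halfpaths2}.

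For \eqref{e:Snops} and \eqref{e:olSnops} I would fix $r\in\{1,\dots,N-1\}$ and first record, again from the complementation relations, that $\#(S_p\cap\{1,\dots,r\})=r-\#(\ol R_p\cap\{\ol 1,\dots,\ol r\})$ and $\#(\ol S_p\cap\{\ol 1,\dots,\ol r\})=r-\#(R_p\cap\{1,\dots,r\})$, with the analogous formulas for $p'$. Next, since $\{1,\dots,r\}$ is an initial segment and $\{\ol 1,\dots,\ol r\}$ a terminal segment of $\Alp$, the truncation operators obey $\#({}^{[k]}B\cap\{1,\dots,r\})=\max\{0,\ \#(B\cap\{1,\dots,r\})-k\}$ for $B\subseteq\{1,\dots,N\}$ and $\#(B^{[k]}\cap\{\ol 1,\dots,\ol r\})=\max\{0,\ \#(B\cap\{\ol 1,\dots,\ol r\})-k\}$ for $B\subseteq\{\ol 1,\dots,\ol N\}$, with matching formulas for ${}^{[k]}R_{p'}$ and $\ol R_p^{[k]}$ on the Lemma side. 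With $k=\sigma+\max\{i'-i,0\}$ and the abbreviations $A=\#(\ol R_p\cap\{\ol 1,\dots,\ol r\})$, $B=\#(\ol R_{p'}\cap\{\ol 1,\dots,\ol r\})$, inequality \eqref{e:Snops} becomes $r-A\ge\max\{0,\ r-B-k\}$: this is immediate when $r-B-k\le 0$ (since $A\le r$), and otherwise reduces to $A\le B+k$, i.e.\ to $\max\{0,\ A-k\}\le B$, which is precisely \eqref{e:olRnops}. Inequality \eqref{e:olSnops} follows in the same manner with $k=\sigma+\max\{i-i',0\}$, barred and unbarred letters exchanged, using \eqref{e:Rnops} in place of \eqref{e:olRnops}.

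I do not expect a genuine obstacle here: the entire content is already contained in Lemma \ref{l:halfpaths}, and the corollary is a mechanical translation through complementation. The only point requiring care is keeping the two truncation conventions straight --- $B^{[k]}$ deletes the $k$ \emph{largest} letters (so on barred letters it removes $\ol 1,\ol 2,\dots$ first) while ${}^{[k]}B$ deletes the $k$ \emph{smallest} (so on unbarred letters it removes $1,2,\dots$ first) --- so that they line up correctly with the $\max\{0,\cdot\}$ expressions produced by the Lemma and with the initial/terminal segments $\{1,\dots,r\}$ and $\{\ol 1,\dots,\ol r\}$.
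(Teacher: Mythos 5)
Your proposal is correct, and it takes exactly the route the paper intends: the paper states the corollary with no written proof (it is tagged with \qed), treating it as an immediate consequence of Lemma~\ref{l:halfpaths} via the complementation relations $S_p=\{r:\ol r\notin\ol R_p\}$, $\ol S_p=\{\ol r:r\notin R_p\}$. Your bookkeeping of the truncation operators — that $^{[k]}B$ discards the $k$ smallest letters and $B^{[k]}$ the $k$ largest, so that on the initial segment $\{1,\dots,r\}$ (resp. terminal segment $\{\ol 1,\dots,\ol r\}$) the truncated intersection count becomes a $\max\{0,\cdot\}$ expression — is exactly what is needed to make \eqref{e:Snops} and \eqref{e:olSnops} line up with \eqref{e:olRnops} and \eqref{e:Rnops}, and the cardinality identity used for \eqref{e:halfpaths2} is also right.
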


\subsection{Tableaux}
In this paper a \emph{skew diagram} $\sd$ is a finite subset $\sd \subset \Z\times\Z_{> 0}$ such that 
\begin{enumerate}\item if $\sd\neq \emptyset$ then there is a $j\in \Z$ such that $(j,1)\in\sd$, and,
\item if $(i,j)\notin \sd$ then either $\forall i'\geq i, \forall j'\geq j$, $(i',j') \notin \sd$ or $\forall i'\leq i, \forall j'\leq j$, $(i',j') \notin \sd$.
\end{enumerate} 
If $(i,j)\in \sd$ we say $\sd$ has a \emph{box} in \emph{row} $i$, \emph{column} $j$. For each $j\in \Z_{>0}$, let $b_j= \max\{i\in \Z| (i,j)\in \sd\}$, be the \emph{bottom box} in the column $j$ and $t_j = \min\{i\in \Z| (i,j)\in \sd\}$ be the \emph{top box} in the column $j$. Define also the \emph{length} of the column $j$ by $l_j = \#\{i\in \Z| (i,j)\in \sd\}$, and observe that $l_j = b_j - t_j+ 1$. 

A \emph{skew tableau} $\cal T$ with shape $\sd$ is then any map $\cal T:\sd \to \Alp$ that obeys the following horizontal rule (H) and vertical rule (V):
\begin{enumerate} \item[(H)] $\cal T(i,j) \leq \cal T(i,j+1)$ and $(\cal T(i,j),\cal T(i,j+1)) \neq (0,0)$, 
                      \item[(V)] $\cal T(i,j) < \cal T (i+1,j)$ or $(\cal T(i,j),\cal T(i+1,j)) = (0,0).$ \end{enumerate} 

Let $\Tab\sd$ denote the set of tableaux of shape $\sd$.

If a skew diagram contains a rectangle of size $(2N+1)\times 2$ then 
horizontal rule implies that there exists no skew tableau of shape $\sd$. 
We call  a skew diagram {\it super skew diagram} if $\#\{i\in \Z|(i,j)\in \sd,\ (i,j+1)\in \sd\}\leq 2N$ for all $j\in\Z_{>0}$.

From now on we consider only super skew diagrams $\sd$. 

We call a skew diagram {\it generic super skew diagram} if  $\#\{i\in \Z|(i,j)\in \sd,\ (i,j+1)\in \sd\}<2N$ for all $j\in\Z_{>0}$.
In other words, a generic skew diagram contains no rectangles of size $2N\times 2$. 

Let $\sd$ be a super skew diagram.
For each $\cal T\in\Tab\sd$ we associate a monomial in $\Z[Y_{i,k}^{\pm 1}]_{(i,k)\in I\times\Z}$ as follows:
\begin{equation} M(\cal T) := \prod_{(i,j)\in \lambda/\mu} \m(\cal T(i,j),4(j-i)) \label{mtab},
\end{equation}
where the contribution of each box is given by\\

$\begin{array}{rcl}\m: \Alp\times \Z & \to & \Z[Y_{i,k}^{\pm 1}]_{(i,k)\in I\times \Z},\\
(i,k) &\mapsto & Y_{i-1,2i+k}^{-1} Y^{\phantom{+1}}_{i,2i-2+k}  ,  1\leq i\leq N-1,\\ 
(N,k) &\mapsto  & Y_{N-1,2N+k}^{-1}  Y_{N,2N-3+k}Y^{\phantom{+1}}_{N,2N-1+k},\\ 
(0,k) &\mapsto  & Y_{N,2N+1+k}^{-1} Y^{\phantom{+1}}_{N,2N-3+k},\\
(\ol N,k)& \mapsto & Y_{N,2N-1+k}^{-1}Y_{N,2N+1+k}^{-1} Y^{\phantom{+1}}_{N-1,2N-2+k},  \\ 
(\ol i,k)& \mapsto & Y^{-1}_{i,4N-2i+k} Y^{\phantom{+1}}_{i-1,4N-2-2i+k}  , 1\leq i\leq N-1,\\
\end{array}
$

\noindent with convention, $Y_{0,k}:= 1$ and $Y_{N+1,k}:=1$ for all $k\in\Z$. Note that $$\chi_q(\L(Y_{1,0}))=\sum_{i\in \Alp}\m(i,0).$$ 

\subsection{Dominant tableau.} 
Given a super skew diagram $\sd$ we define $\cal T_+: \sd \to \Alp$, by filling up the boxes of $\sd$ with letters in $\Alp$ according to the following rule. Starting from column $1$ and going from the column $j$ to the column $j+1$ in $\sd$, always from the most top empty box and working downwards in the alphabetical order, fill up the column $j$ as follows:
\begin{enumerate}
\item enter letters $1,\ldots, r\leq N$, for the maximum $r$ possible,
\item enter as many successive $0$'s as possible, respecting the horizontal rule for the column $j-1$,
\item enter letters $\ol N, \ldots, \ol r$, for the maximum $\ol r$ possible.
\end{enumerate}
By construction, one easily checks that
\begin{lemma}
The map $\cal T_+$ is a tableau.\qed
\end{lemma}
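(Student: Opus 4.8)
The plan is to verify, proceeding column by column from left to right, that $\cal T_+$ obeys the vertical rule (V) and the horizontal rule (H); together these are exactly the statement $\cal T_+\in\Tab\sd$. The vertical rule is immediate from the construction: each column of $\cal T_+$ is filled from top to bottom by a word $1,2,\dots,r$ (with $0\le r\le N$), followed by a block of equal letters $0$, followed by $\ol N,\ol{N-1},\dots$, and in the ordering $1<2<\dots<N<0<\ol N<\dots<\ol 1$ this word is strictly increasing except on the block of $0$'s, which is precisely the exception permitted in (V).

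The first substantive point is that the construction is well defined, i.e.\ that every column can actually be completed by steps (i)--(iii); this reduces to showing that after the unbarred prefix and the maximal block of $0$'s at most $N$ boxes of the column remain, so that they can be filled by barred letters $\ol N,\ol{N-1},\dots$. This is where the hypothesis that $\sd$ is a \emph{super} skew diagram enters: the set of rows common to two consecutive columns has at most $2N$ elements, which is $\#\Alp$ minus the single letter $0$ that may repeat down a column, and an overlap of $2N+1$ is exactly what would make the fill impossible, in parallel with the remark that a $(2N+1)\times 2$ rectangle carries no tableau at all.

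It remains to check (H). That no two $0$'s are horizontally adjacent is built into step (ii), where the $0$'s of column $j$ are required to avoid the rows in which column $j-1$ has a $0$, and every horizontally adjacent pair of boxes of $\sd$ has this form for some $j$. For the inequality $\cal T_+(i,j)\le\cal T_+(i,j+1)$ one uses the geometry of a skew diagram: writing $t_j,b_j$ for the top and bottom rows of column $j$ one has $t_{j+1}\le t_j$ and $b_{j+1}\le b_j$, so the rows common to columns $j$ and $j+1$ form the interval $[t_j,b_{j+1}]$, a top segment of column $j$ and a bottom segment of column $j+1$. I would then split into cases on the type of $\cal T_+(i,j+1)$. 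If it is $0$ or barred, hence $>N$ in the ordering, a failure could occur only against a $0$ or a barred letter of column $j$; the $0$-against-$0$ case is excluded above, and for two barred letters one counts, using that the barred letters of column $j$ occurring in the overlap are its smallest ($\ol N,\ol{N-1},\dots$) while those of column $j+1$ occurring there are its largest, to get that in each common row column $j$ does not exceed column $j+1$. If $\cal T_+(i,j+1)$ is unbarred it equals the index of its box counted from the top of column $j+1$, which by $t_{j+1}\le t_j$ is at least $i-t_j+1=\cal T_+(i,j)$ whenever the latter is also unbarred; and the maximality of $r$ in step (i) says precisely that the unbarred prefix of column $j+1$ is as long as the horizontal rule against column $j$ allows, so it never reaches a row where column $j$ carries a letter $>N$, which disposes of the last possibility. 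With (V) this gives (H), hence the lemma.

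The step I expect to be the main obstacle is this last one: organizing the case analysis for (H) so that the ``boundary'' rows, where one of the two columns passes between unbarred letters, $0$'s and barred letters while the other does not, are treated uniformly, and checking that the greedy choices of steps (i)--(iii) for column $j$ are simultaneously compatible with the horizontal rule against the column on its left (already imposed when that column was filled) and against the column on its right; the super condition on $\sd$ is exactly the hypothesis that keeps this consistent.
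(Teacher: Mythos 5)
The paper gives no proof here (the statement ends with \qed), so there is nothing to compare against directly; your case-by-case verification of (V) and (H) is the right approach. Your skeleton is correct: (V) is automatic, the geometry $t_{j+1}\le t_j$, $b_{j+1}\le b_j$ is correct and handles unbarred-against-unbarred, and the greedy choice in step (i) guarantees the unbarred prefix of column $j+1$ never sits next to a $0$ or barred letter in column $j$. Two points, however, are not nailed down.

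First, your paraphrase of step (ii) — that the $0$'s of column $j$ "avoid the rows in which column $j-1$ has a $0$" — is too weak. "Respecting the horizontal rule" forces the $0$'s to avoid rows in which column $j-1$ has \emph{either} a $0$ \emph{or} a barred letter, since $0<\ol N$ would otherwise be violated from the left. This stronger reading is exactly what rules out the (barred in column $j$, $0$ in column $j+1$) case, which you do not address, and it is also what makes the barred-against-barred "counting" go through: one needs that the row where $0$'s end and barred letters begin is nonincreasing in $j$, and this is precisely the content of step (ii) with the correct reading. As stated, "one counts ... the barred letters of column $j$ occurring in the overlap are its smallest while those of column $j+1$ occurring there are its largest" describes a consequence of that monotonicity rather than proving it.

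Second, the well-definedness argument is more delicate than "the overlap is at most $2N$." Writing $\delta_j$ for the last row of the $0$-block of column $j$, one has $\delta_j=\min(b_j,\gamma_{j-1})$ and $\gamma_{j-1}=\min(t_{j-1}+N-1,\delta_{j-1})$, so $b_j-\delta_j\le N$ follows by induction on $j$: if $\delta_j=t_{j-1}+N-1$ the super condition $b_j-t_{j-1}\le 2N-1$ gives it, while if $\delta_j=\delta_{j-1}$ one needs the inductive bound $b_{j-1}-\delta_{j-1}\le N$ together with $b_j\le b_{j-1}$. The super condition alone does not close the argument; the inductive propagation of the $0$/barred boundary is essential, and this is the same monotonicity that the barred-against-barred case uses. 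So both of your acknowledged "obstacles" are genuine, and they are resolved simultaneously by the observation that step (ii), correctly read, makes the $0$/barred boundary weakly decrease from column to column.
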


The tableau $\cal T_+$ is called the \emph{dominant tableau} of shape $\sd$. Let $m_+=m_+\sd := M(\cal T_{+})$ obtained by (\ref{mtab}). We give an alternative way of computing $m_+$. For each column $j$ of $\sd$ such that $l_j\geq N$, let $s_j = t_j + N-1$, and observe that $\cal T_+(s_j,j) = N$. A column $j$ of $\sd$ is said to be \emph{special} if $l_j\geq N$ and $(s_j+1,j+1)\notin \sd$. Define $$\scr S = \scr S\sd : = \{j\ | \textrm{	the column}\ j \ \textrm{of}\ \sd \ \textrm{is special}\}.$$

\begin{lemma}\label{l:dommontbl}
Let $\sd$ be a skew diagram. Then
\begin{equation}
m_+\sd = \prod_{j\in \Z_{>0}} \bt(\cal T_+(b_j, j),4(j-b_j))\prod_{j\in \scr S}\bt(N, 4(j-s_j)+2),
\label{dom mon tbl}
\end{equation}
where $\bt: \Alp\times\Z \to \Z[Y_{i,k}]_{(i,k)\in I \times \Z}$ maps
  
\noindent 
$\begin{array}{rcl}
(i,k) & \mapsto & Y^{\phantom{+1}}_{i,2i-2+k},\,  1\leq i\leq N-1,\\ 
(N,k) & \mapsto & Y_{N,2N-3+k},\\ 
(0,k) & \mapsto & Y^{\phantom{+1}}_{N,2N-3+k},\\
(\ol i,k) & \mapsto & Y^{\phantom{+1}}_{i-1,4N-2-2i+k},\, 1\leq i\leq N.
\end{array}$

In particular, $m_+\sd\in \cal P_{\cal X}$.\qed
\end{lemma}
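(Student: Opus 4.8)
The plan is to expand $m_+\sd=M(\cal T_+)$ straight from \eqref{mtab}, regroup the product over boxes column by column, and use a telescoping cancellation; a short combinatorial analysis of the shape of the columns of $\cal T_+$ then reconciles the surviving factors with the right-hand side of \eqref{dom mon tbl}. The first step is to pin down that shape. Using that consecutive columns shift upward (the top and bottom box indices $t_j,b_j$ are weakly decreasing in $j$), one checks by induction on $j$ that the initial run of integers in column $j$ always has length $\min(N,l_j)$; hence column $j$, read top to bottom, is $1,2,\dots,\min(N,l_j)$, followed (when $l_j>N$) by a run of $p_j$ zeros and then a possibly empty run $\ol N,\ol{N-1},\dots,\ol{r'}$. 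In particular $\cal T_+(s_j,j)=N$ exactly when $l_j\ge N$, the letter directly above any $0$ is $N$ or $0$, and the letter directly above the topmost $\ol N$ of a barred run is $N$ or $0$; so any two vertically adjacent entries of $\cal T_+$ are consecutive along this staircase.

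The heart of the argument is a local identity, verified by a one-line comparison of the formulas for $\m$ and $\bt$ (with the convention $Y_{0,k}=Y_{N+1,k}=1$): for a box $(i,j)$ with $i>t_j$, writing $\alpha=\cal T_+(i,j)$, $\alpha'=\cal T_+(i-1,j)$ and $c=4(j-i)$,
$$\m(\alpha,c)=\bt(\alpha',c+4)^{-1}\,\bt(\alpha,c)\,\varepsilon_{i,j},\qquad \varepsilon_{i,j}=\begin{cases}\bt(N,c+2)&\text{if }\alpha=N,\\ \bt(N,c+2)^{-1}&\text{if }\alpha=\ol N,\\ 1&\text{otherwise},\end{cases}$$
together with $\m(\cal T_+(t_j,j),4(j-t_j))=\bt(\cal T_+(t_j,j),4(j-t_j))$ for the top box (here one uses that the integer run starts at $1$). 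The point is that $Y_N$ occurs only in $\m(N,\cdot),\m(0,\cdot),\m(\ol N,\cdot)$, and that $\m(N,\cdot)$ (resp. $\m(\ol N,\cdot)$, with its admissible upstairs neighbour) carries exactly one $Y_N$ (resp. one $Y_N^{-1}$) beyond what the flanking $\bt$'s account for. Since $\alpha'$ is the entry of the box directly above $(i,j)$, telescoping this identity down column $j$ collapses the $\bt(\cdot,\cdot)^{\pm1}$ and gives
$$\prod_{t_j\le i\le b_j}\m(\cal T_+(i,j),4(j-i))=\bt(\cal T_+(b_j,j),4(j-b_j))\cdot\prod_{\substack{t_j<i\le b_j\\ \cal T_+(i,j)\in\{N,\ol N\}}}\varepsilon_{i,j},$$
so column $j$ contributes $\bt$ of its bottom box, times $\bt(N,4(j-s_j)+2)$ when $l_j\ge N$, times $\bt(N,4(j-\rho_j)+2)^{-1}$ when column $j$ has a barred run whose topmost $\ol N$ sits in row $\rho_j$.

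It then remains to show that, over all columns, the $N$-factors cancel down to $\prod_{j\in\scr S}\bt(N,4(j-s_j)+2)$. The combinatorial fact to establish is that column $j$ has a barred run if and only if $l_{j-1}\ge N$ and $(s_{j-1}+1,j)\in\sd$ — i.e. column $j-1$ fails to be special — and that in that case $\rho_j=s_{j-1}+1$: this follows from the column shape, since the $0$'s of column $j$ fill downward until blocked by the first non-integer entry of column $j-1$, which sits in row $t_{j-1}+N=s_{j-1}+1$. Granting this, $\bt(N,4(j-\rho_j)+2)^{-1}=\bt(N,4((j-1)-s_{j-1})+2)^{-1}$ cancels exactly the factor coming from the letter $N$ of the non-special column $j-1$, the $\bt(N,\cdot)$-factors telescope along the column index, and precisely those attached to special columns survive, which is \eqref{dom mon tbl}. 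Finally $m_+\sd\in\cal P_{\cal X}$, because the right-hand side of \eqref{dom mon tbl} is a product of values of $\bt$ and a parity check ($2i-2+4(\cdot)$ is even, $2N-3+4(\cdot)$ is odd) shows each such value is a $Y_{i,k}$ with $(i,k)\in\cal X$. The main obstacle is the combinatorial bookkeeping: both the claim that the integer run of each column has length $\min(N,l_j)$ and the description of when and where barred runs arise in terms of the neighbouring column, which rest on the rigid "double staircase" structure of $\cal T_+$; once these are in place the rest is the routine telescoping above. (An alternative organisation is to induct on $|\sd|$ by deleting an outer corner box, using the same local identity and tracking the change of $\scr S$.)
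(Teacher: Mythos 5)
The paper gives no proof of this lemma (it is stated with \qed only), treating it as a direct computational check; your proof correctly organises that check via the local identity $\m(\alpha,c)=\bt(\alpha',c+4)^{-1}\bt(\alpha,c)\varepsilon_{i,j}$, column-wise telescoping, and cancellation of the $\bt(N,\cdot)^{\pm1}$ factors across neighbouring columns, and I verified the local identity case by case together with the claims about the column shape of $\cal T_+$ (initial integer run of length $\min(N,l_j)$, barred run starting at $\ol N$ in row $\rho_j=s_{j-1}+1$). One small wording slip: ``column $j-1$ fails to be special'' is too weak, since $l_{j-1}<N$ also makes $j-1$ non-special yet produces no barred run in column $j$; however, the precise condition you actually use, $l_{j-1}\ge N$ and $(s_{j-1}+1,j)\in\sd$, is the correct one, so the argument stands.
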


\begin{figure}\caption{A non-generic super skew diagram and its dominant tableau in type $B_2$.}
\begin{center}
\begin{equation*}\begin{tikzpicture}[scale=.4,yscale=-1]
\begin{scope}[xshift=-.5cm,yshift=-.5cm] \draw[help lines] (1,-6) grid (5,1); \end{scope}
\foreach \y in {-5,-4,-3,-2,-1,0,1} {\node at (-.5,\y-1) {$\scriptstyle\y$};}
\foreach \x in {1,2,3,4,} {\node at (\x,-7) {$\scriptstyle\x$};}
\begin{scope}[every node/.style={minimum size=.4cm,inner sep=0mm,fill=gray!10,draw,rectangle}]
\foreach \x/\y in {1/0,1/-1,1/-2,1/-3,2/0,2/-1,2/-2,2/-3,3/-1,3/-3,3/-2,3/-3,3/-4, 3/-5 ,4/-6, 4/-4, 4/-5} {\node at (\x,\y) {};}
\end{scope}
\end{tikzpicture}\quad\text{and}\quad
\begin{tikzpicture}[scale=.4,yscale=-1]
\begin{scope}[xshift=-.5cm,yshift=-.5cm] \draw[help lines] (1,-6) grid (5,1); \end{scope}
\foreach \y in {-5,-4,-3,-2,-1,0,1} {\node at (-.5,\y-1) {$\scriptstyle\y$};}
\foreach \x in {1,2,3,4,} {\node at (\x,-7) {$\scriptstyle\x$};}
\begin{scope}[every node/.style={minimum size=.4cm,inner sep=0mm,draw,rectangle}]
\foreach \x/\y/\c in {1/0/0,1/-1/0,1/-2/2,1/-3/1,2/0/$\ol 1$,2/-1/$\ol 2$,2/-2/2,2/-3/1,3/-1/$\ol 2$,3/-2/0,3/-3/0,3/-4/2, 3/-5/1 ,4/-6/1, 4/-4/0, 4/-5/2} {\node at (\x,\y) {\c};}
\end{scope}
\end{tikzpicture}\,\,
\end{equation*}
\end{center}
\label{f:ng super skew diagram}
\end{figure}

{\bf Example.} In type $B_2$, the non-generic super skew diagram shown in Figure \ref{f:ng super skew diagram} has $\scr S =\{3,4\}$, the column $2$ is non-generic (cf. Section \ref{non generic sec}), and the dominant monomial associated to $\cal T_{+}$ is 
$$Y_{2,1}Y_{1,14}Y_{2,27}Y_{2,29}Y_{2,35}.$$

Later we show that, for each $\cal T\in \Tab\sd$, $M(\cal T)\in \cal P^+$ iff $\cal T = \cal T_+$, see Theorem \ref{t:tabxpaths}. 

Note that a super skew diagram is non-generic if and only if there exist $j$ such that $\cal T_+(b_j,j)=\bar 1$.
We now focus on generic skew diagrams. The non-generic ones are treated in Section \ref{non generic sec}.

Let $\sd$ be generic super skew diagram and let column $j$ be non-empty. Define
$$\varsigma_j = j +\#\{ k\in \scr S\sd\ |\ k<j\}.$$
Then the column $j$ contributes to $m_+(\sd)$ a variable $Y_{i_{\varsigma_j},k_{\varsigma_j}}$ , and if column $j$ is special then it also contributes a variable $Y_{N,k_{\varsigma_j+1}}$.

The following lemma is a consequence of Lemma \ref{l:dommontbl}.
\begin{lemma}\label{l:possnktab}
Let $\sd$ be a generic super skew diagram and $j,j'$ columns of $\sd$. If $j'>j$ then 
$$k_{\varsigma_{j'}} \geq k_{\varsigma_j+1} > k_{\varsigma_j}.$$
Moreover, the equality holds only if $j' = j+1$ and $j$ is not special.\qed  
\end{lemma}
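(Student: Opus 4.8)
The plan is to make the contributions to $m_+\sd$ explicit via Lemma~\ref{l:dommontbl} and then reduce the statement to two inequalities about consecutive columns.

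First I would unpack Lemma~\ref{l:dommontbl}: it exhibits $m_+\sd$ as a product of single variables, each non-empty column $j$ contributing $\bt(v_j,4(j-b_j))$ with $v_j:=\cal T_+(b_j,j)$, and each special column $j\in\scr S$ contributing in addition $\bt(N,4(j-s_j)+2)$ with $s_j=t_j+N-1$. Genericity enters here, and essentially only here, to guarantee $v_j\neq\ol1$ for every $j$: otherwise the main factor of some column would be $\bt(\ol1,\cdot)=Y_{0,\cdot}=1$ and the bookkeeping below would fail. Reading the table defining $\bt$, the $k$-index of $\bt(v_j,4(j-b_j))$ is $\kappa(j):=4(j-b_j)+c(v_j)$, where $c\colon\Alp\to\Z$ is given by $c(i)=2i-2$ for $i<N$, $c(N)=c(0)=2N-3$, and $c(\ol i)=4N-2-2i$ for $2\le i\le N$; one checks $c$ is weakly increasing along the alphabetical order and grows by at most $2$ at each successive step of $\Alp$. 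The extra factor of a special column has $k$-index $\kappa^{\mathrm{sp}}(j):=4(j-s_j)+2N-1$. Since $\cal X(m_+\sd)$ lists these variables in order of increasing $k$-index, it suffices to prove, for every pair of consecutive non-empty columns,
\[
(\mathrm I)\quad j\notin\scr S\ \Longrightarrow\ \kappa(j)<\kappa(j+1),\qquad\qquad (\mathrm{II})\quad j\in\scr S\ \Longrightarrow\ \kappa(j)<\kappa^{\mathrm{sp}}(j)<\kappa(j+1).
\]
Indeed, (I)--(II) say that reading the variables column by column --- and, inside a special column, the main one before the extra one --- yields strictly increasing $k$-indices; hence this reading is exactly $\cal X(m_+\sd)$, the main variable of column $j$ occupies position $\varsigma_j=j+\#\{k\in\scr S\mid k<j\}$, so $k_{\varsigma_j}=\kappa(j)$ and $k_{\varsigma_j+1}$ equals $\kappa^{\mathrm{sp}}(j)$ if $j\in\scr S$ and $\kappa(j+1)$ otherwise. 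Then $k_{\varsigma_j+1}>k_{\varsigma_j}$; and for $j'>j$ one has $\varsigma_{j'}\ge\varsigma_{j+1}\ge\varsigma_j+1$, so $k_{\varsigma_{j'}}\ge k_{\varsigma_j+1}$, with equality only if $\varsigma_{j'}=\varsigma_j+1$, i.e. only if $j'=j+1$ and $j\notin\scr S$.

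Second I would prove (I)--(II) from the staircase property of $\sd$ (which forces $t_{j+1}\le t_j$ and $b_{j+1}\le b_j$), the horizontal rule for $\cal T_+$ (so $c(\cal T_+(i,j))\le c(\cal T_+(i,j+1))$ on every common row $i$), the vertical rule, and the shape of the dominant tableau (the positive part of column $j$ is its top $\min(l_j,N)$ rows, followed by the $0$'s, followed by the barred tail). For (I), write $\kappa(j+1)-\kappa(j)=4+4(b_j-b_{j+1})+\bigl(c(v_{j+1})-c(v_j)\bigr)$; if columns $j$ and $j+1$ overlap then row $b_{j+1}$ lies in column $j$, so $c(v_{j+1})\ge c(\cal T_+(b_{j+1},j))\ge c(v_j)-2(b_j-b_{j+1})$ (each of the $b_j-b_{j+1}$ vertical steps in column $j$ raises $c$ by at most $2$), whence $\kappa(j+1)-\kappa(j)\ge 4+2(b_j-b_{j+1})>0$; if they do not overlap, $b_{j+1}<t_j$ puts the columns so far apart that a cruder estimate suffices. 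For the left half of (II): when $l_j\ge N$ the $l_j-N$ rows of column $j$ below its top letter $N$ carry $0$'s then barred letters, so (using genericity, $v_j\ne\ol1$) $c(v_j)\le 2l_j-4<4(l_j-N)+2N-1=4(b_j-s_j)+2N-1$, i.e. $\kappa(j)<\kappa^{\mathrm{sp}}(j)$. For the right half of (II): the condition $(s_j+1,j+1)\notin\sd$ together with $t_{j+1}\le t_j\le s_j$ forces $b_{j+1}\le s_j$; since every row of column $j$ up to $s_j$ carries one of the letters $1,\dots,N$, the horizontal rule gives $c(v_{j+1})\ge 2(b_{j+1}-t_j)-1$ when the columns overlap, and a direct computation then yields $\kappa(j+1)-\kappa^{\mathrm{sp}}(j)\ge 2N+2(t_j-b_{j+1})\ge 2>0$, the non-overlapping subcase again being easier.

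Assembling (I)--(II) as above gives both displayed inequalities of the lemma and the description of the equality case. I do not expect a serious conceptual obstacle: once Lemma~\ref{l:dommontbl} is used to pass from the tableau to the explicit list of variables, the rest is a direct computation. The genuinely fiddly parts will be keeping track of the boundary letters $N,0,\ol N$ of $\Alp$, where $c$ increases by $1$, $0$, $1$ instead of $2$, handling the special columns' extra variable in the ordering, and treating the degenerate possibility that two consecutive columns do not overlap --- these, rather than any single hard step, will be where the effort goes.
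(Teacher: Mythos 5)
Since the paper's statement carries a \(\qed\) with only the preceding remark that it ``is a consequence of Lemma~\ref{l:dommontbl}'', there is no written argument to compare against; your proof is a natural fleshing-out of that remark, and the reduction of the lemma to the two column-pair inequalities (I)--(II) via the indices \(\kappa(j)\), \(\kappa^{\mathrm{sp}}(j)\) is the right way to organize it. The bookkeeping with \(c\) and the right half of (II) are fine. I have three concrete reservations about the execution.

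First, the bound \(c(v_j)-c(\cal T_+(b_{j+1},j))\le 2(b_j-b_{j+1})\) rests on the assertion that each vertical step in a column of \(\cal T_+\) raises \(c\) by at most \(2\). That is not a consequence of the tableau axioms --- the vertical rule alone allows an arbitrary jump in one step --- it is a structural property of the greedy construction that needs its own proof. The clean way is an induction on the column index: using \(\cal T_+(i,j-1)\le \cal T_+(i,j)\) together with the inductive hypothesis on column \(j-1\) to bound the forcing letter \(\cal T_+(i+1,j-1)\), one shows that every vertical step in \(\cal T_+\) advances by at most one alphabet position, with the single exception \(N\to\ol N\), and in either case \(c\) increases by at most \(2\). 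Without some version of this your overlap estimate for (I) is unjustified.

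Second, you dismiss the non-overlap subcase of (I) with ``a cruder estimate suffices'', but as stated it is not clear that it does: \(c(v_{j+1})-c(v_j)\) could a priori be as negative as \(-(4N-6)\), and \(4+4(b_j-b_{j+1})\ge 4+4l_j\) does not obviously dominate that. The missing observation is that \(\cal T_+\) starts every column at the letter \(1\): since \(t_j\) is weakly decreasing, the horizontal constraint at the top box of column \(j\) coming from column \(j-1\), when it exists at all, is exactly \(\cal T_+(t_j,j-1)=a_{j-1}\), and this propagates \(a_j=1\) by induction. Hence \(c(v_j)\le 2(l_j-1)\), and \(4+4l_j - 2(l_j-1) = 6+2l_j>0\). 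This is worth writing out, since it is the one place the staircase property of \(\sd\) enters in an essential way.

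Third, a small slip in the left half of (II): for the minimal special column, \(l_j=N\) and \(v_j=N\), so \(c(v_j)=2N-3=2l_j-3\), not \(\le 2l_j-4\). The corrected bound \(c(v_j)\le 2l_j-3\) still satisfies \(2l_j-3<4(l_j-N)+2N-1\) for \(l_j\ge N\), so the conclusion \(\kappa(j)<\kappa^{\mathrm{sp}}(j)\) is unaffected.
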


\begin{lemma}\label{l:domtabsnk}
Let $\sd$ be a generic super skew diagram. Then the sequence $\cal X(m_+\sd)$ is a snake.
\end{lemma}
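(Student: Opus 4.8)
The statement to prove is Lemma \ref{l:domtabsnk}: if $\sd$ is a generic super skew diagram, then $\cal X(m_+\sd)$ is a snake, i.e. consecutive points in the sequence are in snake position. The strategy is to read off $\cal X(m_+\sd)$ directly from the explicit product formula of Lemma \ref{l:dommontbl}, and then verify the snake inequality (condition (i) of the extended snake position) for each pair of consecutive points.

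First I would set up notation: order the columns $j_1<j_2<\dots$ of $\sd$, and for each non-empty column $j$ introduce, as in the paragraph preceding the lemma, the index $\varsigma_j = j + \#\{k\in\scr S\sd \mid k<j\}$, so that column $j$ contributes the variable $Y_{i_{\varsigma_j},k_{\varsigma_j}}$ and, when $j\in\scr S\sd$, additionally $Y_{N,k_{\varsigma_j+1}}$. By Lemma \ref{l:dommontbl}, $i_{\varsigma_j}$ and $k_{\varsigma_j}$ are determined by the bottom entry $\cal T_+(b_j,j)$ via the map $\bt$, and genericity guarantees $\cal T_+(b_j,j)\neq\ol 1$, so the relevant cases of $\bt$ are $(i,k)\mapsto Y_{i,2i-2+k}$ for $1\le i\le N-1$, $(N,k)\mapsto Y_{N,2N-3+k}$, $(0,k)\mapsto Y_{N,2N-3+k}$, and $(\ol i,k)\mapsto Y_{i-1,4N-2-2i+k}$ for $2\le i\le N$. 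In each case I would record $i_{\varsigma_j}$ as a function of the bottom letter and $k_{\varsigma_j}$ as an affine function of $4(j-b_j)$ plus a constant depending on that letter; similarly $k_{\varsigma_j+1}$ for special columns is $4(j-s_j)+2$ plugged into $\bt(N,\cdot)$, giving $k_{\varsigma_j+1}=2N-1+4(j-s_j)$.

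Next, the core of the proof: check the snake condition for two consecutive entries of $\cal X(m_+\sd)$. There are two sub-cases. (a) The two entries come from the same special column $j$: these are $(i_{\varsigma_j},k_{\varsigma_j})$ with $i_{\varsigma_j}$ determined by $\cal T_+(b_j,j)$, followed by $(N,k_{\varsigma_j+1})$. Here I would use $b_j = t_j + l_j - 1 \ge s_j + 1 = t_j + N$ (because $l_j\ge N$ for a special column, and in fact the bottom box lies strictly below the box containing $N$), compute $k_{\varsigma_j+1}-k_{\varsigma_j}$ explicitly, and verify it is $\ge 2 + |N - i_{\varsigma_j}|$ with the correct congruence mod $4$; the Kronecker-delta terms $\delta_{Ni}$, $\delta_{Ni'}$ must be tracked carefully since $i'=N$ here. (b) The two entries come from consecutive columns $j<j'$ (with no special column of $\sd$ strictly between them, or $j'=j+1$): by Lemma \ref{l:possnktab} we already know $k_{\varsigma_{j'}}>k_{\varsigma_j}$ and more precisely $k_{\varsigma_{j'}}\ge k_{\varsigma_j+1}$. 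The column-length difference $b_{j'}-b_j$ and the genericity constraint (no $2N\times 2$ rectangle, so overlapping columns share at most $2N-1$ rows, equivalently $b_{j'} - t_{j'} $ and $b_j - t_j$ interact through $t_{j'}\le b_j+1$ etc.) translate directly into the needed lower bound $k_{\varsigma_{j'}} - k_{\varsigma_j}\ge 4 + 2|i_{\varsigma_{j'}} - i_{\varsigma_j}| - \delta_{N i_{\varsigma_j}} - \delta_{N i_{\varsigma_{j'}}}$, together with the congruence. The congruence condition should essentially be automatic: all $k$-values are built from $4(j-b_j)$ plus fixed offsets, so differences of $k$-values lie in prescribed residue classes mod $4$ governed by the bottom letters.

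The main obstacle will be the bookkeeping in sub-case (b): one must enumerate the possible pairs of bottom letters $(\cal T_+(b_j,j),\cal T_+(b_{j'},j'))$ — each of $\{1,\dots,N-1,N,0,\ol N,\dots,\ol 2\}$ — and, using the shape constraints on a skew diagram (columns 1 and 2 conditions, plus the genericity bound on shared rows), show the inequality holds in every combination. The key geometric input is that for a skew diagram the rows occupied by column $j'$ start no higher than one below where column $j$'s rows start (condition (ii) in the definition of skew diagram), so $t_{j'}$ and $b_j$ are comparably placed, and genericity forbids the extremal rectangle; converting this into the precise arithmetic bound on $k_{\varsigma_{j'}}-k_{\varsigma_j}$, while correctly handling the $\pm\delta_{N\cdot}$ shifts coming from the special-column contributions and from the $N$-versus-non-$N$ cases of $\bt$, is where the care is needed. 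Once the bottom-letter dictionary from Lemma \ref{l:dommontbl} is written out, though, each individual check is a short computation, and I would present it as a table or a short case analysis rather than grinding every line.
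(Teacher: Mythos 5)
Your plan follows essentially the same route as the paper's proof: read off $\cal X(m_+\sd)$ from the explicit product formula of Lemma \ref{l:dommontbl}, invoke Lemma \ref{l:possnktab} to reduce to consecutive columns, and then split into the two cases according to whether the first column is special or not, tracking the $\delta_{N\cdot}$ corrections. The only cosmetic difference is that the paper organizes the verification by exhibiting the shift parameters $\sigma_{\varsigma_j}$ explicitly (formulas which it needs again in Proposition \ref{p:bijpathtab}), rather than enumerating the pairs of bottom letters, but both amount to the same inspection.
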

\begin{proof}
By Lemmas \ref{l:dommontbl}  and \ref{l:possnktab} it suffices to prove that the monomials corresponding to two consecutive columns are in snake position. Let $j$ and $j+1$ two columns of $\sd$. By inspection we prove that: 

If $j$ is not special, then $(i_{\varsigma_{j+1}},k_{\varsigma_{j+1}})$ is in snake position to $(i_{\varsigma_{j+1}},k_{\varsigma_{j+1}})$, and
$$\sigma_{\varsigma_j} = \begin{cases} 
b_j - b_{j+1} -\max\{i_{\varsigma_j} - i_{\varsigma_{j+1}},0\} & {\rm if} \ l_j<N, \\
b_j - b_{j+1} - \max\{i_{\varsigma_{j+1}} - i_{\varsigma_{j}},0\} & {\rm if} \ l_j>N.\\
\end{cases}$$

If $j$ is special, then $(i_{\varsigma_{j+1}}, k_{\varsigma_{j+1}})$ is in snake position to $(N, k_{\varsigma_{j}+1})$, and the latter is in snake position to $(i_{\varsigma_{j}},k_{\varsigma_{j}})$. Moreover, 
$$\sigma_{\varsigma_j} = l_j - 2N + i_{\varsigma_j} \quad {\rm and} \quad \sigma_{\varsigma_j+1} = s_j - b_{j+1} - N +i_{\varsigma_{j+1}}.$$
\end{proof}

\subsection{Bijection between paths and tableaux}
Let $\sd$ be a generic super skew diagram. Let $(i_t,k_t)_{1\leq t \leq T}:= \cal X(m_+\sd)$. By  Lemma \ref{l:dommontbl}, 
$T$ is the number of non-empty columns plus the number of special columns.
Given $\ps \in \nops$, we write simply $R_t$ instead of $R_{p_t}$ and similarly so for $\ol R_t$, $S_t$ and $\ol S_t$. 

For $\ps \in \nops$, we define the map $\cal T_{\ps} : \sd \to \Alp$ by filling up the boxes of $\sd$. Each column $j$ is filled up by the following way:

\begin{enumerate}
\item if $l_j<N$, starting from the box $(t_j,j)$ and working downwards, enter the letters of $S_{\varsigma_j}$ in alphabetical order. Then, starting from the box $(b_j,j)$ and working upwards enter the letters of $\ol S_{\varsigma_j}$ in reverse alphabetical order. Enter the letter $0$ into all the boxes in the $j^{\rm th}$ column that remain unfilled,
\item if $l_j\geq N$, start from the box $(t_j,j)$ and working downwards, enter the letters of $R_{\varsigma_j+1}$ in alphabetical order. Then, starting from the box $(b_j,j)$ and working upwards enter the letters of $\ol R_{\varsigma_j}$ in reverse alphabetical order. Enter the letter $0$ into all the boxes in column $j$ that remain unfilled.
\end{enumerate}

\begin{prop}\label{p:bijpathtab}
The map $\nops\to \Tab\sd$ sending $\ps\mapsto \cal T_{\ps}$ is a bijection and 
\begin{equation}\label{e:equalmon}
\m (\ps) = M (\cal T_{\ps}).
\end{equation}
\end{prop}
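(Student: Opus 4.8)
## Proof Proposal

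The plan is to establish the three assertions in turn: that $\cal T_{\ps}$ is a genuine tableau for every $\ps\in\nops$, that $\ps\mapsto\cal T_{\ps}$ is a bijection onto $\Tab\sd$, and that the monomial identity \eqref{e:equalmon} holds. I would build the argument column by column, exploiting Lemma \ref{l:domtabsnk} (which gives the precise shift data $\sigma_{\varsigma_j}$ relating consecutive columns of $\sd$ to snake positions in $\cal X(m_+\sd)$) together with the inequalities of Lemma \ref{l:halfpaths} and Corollary \ref{r:halfpaths}.

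\textbf{Step 1: $\cal T_{\ps}$ is a tableau.} First I would check the column (vertical) rule within each column $j$. By construction the letters entered downward from $(t_j,j)$ are those of $R_{\varsigma_j+1}$ or $S_{\varsigma_j}$ in alphabetical order, the letters entered upward from $(b_j,j)$ are those of $\ol R_{\varsigma_j}$ or $\ol S_{\varsigma_j}$ in reverse alphabetical order, and the remaining boxes are filled with $0$; since $R$-type letters are all $<0<\ol R$-type letters, the strict increase holds, and repeated $0$'s are permitted by (V). That the filling is consistent (no box both receives a barred and an unbarred letter, no box is doubly filled) follows from the cardinality bounds: for $l_j<N$ we need $\#S_{\varsigma_j}+\#\ol S_{\varsigma_j}\leq l_j\leq i_{\varsigma_j}$, which is Lemma \ref{l:halfpathsN}; for $l_j\geq N$ the analogous bound $\#R_{\varsigma_j+1}+\#\ol R_{\varsigma_j}\leq l_j$ follows from Lemma \ref{l:halfpathsN} applied to $(N,k_{\varsigma_j+1})$ and from $\#\ol R_{\varsigma_j}\leq N$, using that the overlap gap between $p_{\varsigma_j}$ and the segment at $(N,k_{\varsigma_j+1})$ is exactly $\sigma$ as recorded in Lemma \ref{l:domtabsnk}. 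The horizontal rule between columns $j$ and $j+1$ is the heart of the matter: it should be read off from the non-overlapping inequalities \eqref{e:olRnops}, \eqref{e:Rnops}, \eqref{e:Snops}, \eqref{e:olSnops}, where the shift $\sigma$ appearing there is matched to the $\sigma_{\varsigma_j}$ formulas of Lemma \ref{l:domtabsnk}; the truncations ${}^{[k]}B$ and $B^{[k]}$ defined in \eqref{e:alphrest} are precisely the bookkeeping needed to account for the vertical offset $b_j-b_{j+1}$ (resp. $t_j-t_{j+1}$) between the two columns of $\sd$. One must separately handle the three possible $l_j,l_{j+1}$ regimes ($<N$, $=N$ special, $>N$) and, where a column is special, insert the intermediate point $(N,k_{\varsigma_j+1})$ so that the two halves of the snake inequality apply.

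\textbf{Step 2: bijectivity.} The map is injective because $p_t$ is completely determined by the pair $(R_{p_t},\ol R_{p_t})$ (equivalently $(S_{p_t},\ol S_{p_t})$ when $i_t<N$), as noted in the text, and these sets can be recovered from $\cal T_{\ps}$ by reading which letters appear in the column(s) of $\sd$ associated to $p_t$ via $\varsigma_j$. For surjectivity, given $\cal T\in\Tab\sd$ I would run the construction backwards: each column $j$ determines candidate sets $R_{\varsigma_j+1},\ol R_{\varsigma_j}$ (or $S_{\varsigma_j},\ol S_{\varsigma_j}$), hence a candidate path $p_{\varsigma_j}$ (and $p_{\varsigma_j+1}$ when $j$ is special), and I must verify that the resulting tuple is non-overlapping — which is again exactly the content of the inequalities \eqref{e:olRnops}--\eqref{e:olSnops} read in the reverse direction, now guaranteed by the horizontal and vertical rules (H), (V). A clean way to organize this is to note that both $\nops$ and $\Tab\sd$ are in bijection with the same combinatorial data (tuples of subsets of $\Alp$ satisfying the interleaving inequalities), with the highest path tuple corresponding to $\cal T_+$ by the Example after Lemma \ref{l:halfpathsN} together with the description of $\cal T_+$.

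\textbf{Step 3: the monomial identity.} Finally \eqref{e:equalmon} is a direct but bookkeeping-heavy computation: expand $M(\cal T_{\ps})=\prod_{(i,j)\in\sd}\m(\cal T_{\ps}(i,j),4(j-i))$ using the explicit formulas for $\m(a,k)$, and compare with $\m(\ps)=\prod_t\m(p_t)=\prod_t\prod_{(i,k)\in C_{p_t,+}}Y_{i,k}\prod_{(i,k)\in C_{p_t,-}}Y_{i,k}^{-1}$. I expect a telescoping: within a column the $Y$-factors contributed by successive boxes cancel in pairs except at the ``corner'' boxes, matching the corner sets $C_{p_t,\pm}$; the letter $0$ contributes the KR-type cancellation and the special-column correction term in Lemma \ref{l:dommontbl} accounts for the extra $(N,l)$ corner. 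It suffices to verify this for the highest tuple (where it reduces to Lemma \ref{l:dommontbl}, $\m(\cal T_+)=m_+\sd$) and then to check that a single lowering move $p_t\mapsto p_t\scr A_{j,l}^{-1}$ corresponds on the tableau side to the elementary modification $\cal T\mapsto\cal T'$ changing one entry, multiplying $M(\cal T)$ by $A_{j,l}^{-1}$; since every element of $\nops$ is reached from the highest tuple by such moves (Theorem \ref{mainthm1}), induction finishes it.

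\textbf{Main obstacle.} The delicate point is Step 1's horizontal rule, and correspondingly the non-overlap check in Step 2: one must match, case by case over the $l_j,l_{j+1}$ regimes and the special/non-special dichotomy, the three different shift formulas of Lemma \ref{l:domtabsnk} with the precisely-shifted truncated-set inequalities of Lemma \ref{l:halfpaths} and Corollary \ref{r:halfpaths}, keeping careful track of the vertical displacement $b_j-b_{j+1}$ versus $t_j-t_{j+1}$ of the two columns inside $\sd$. This is where an off-by-one or a sign error would most easily creep in, and it is the step that genuinely uses the genericity hypothesis (so that $\cal T_+(b_j,j)\neq\ol 1$ and the column lengths behave predictably).
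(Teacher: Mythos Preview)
Your proposal is correct and follows essentially the same approach as the paper's own proof: establish the tableau property column by column (vertical rule by construction, no double-filling via the cardinality bounds, horizontal rule via a case split on the $l_j,l_{j+1}$ regimes using Lemma~\ref{l:halfpaths}, Corollary~\ref{r:halfpaths}, and the shift formulas of Lemma~\ref{l:domtabsnk}), recover bijectivity from the fact that $(R_t,\ol R_t)$ determines $p_t$ and that the construction inverts, and prove \eqref{e:equalmon} by checking it on the highest tuple and then tracking a single lowering move. One small correction: in Step~1 for $l_j\geq N$, the bound $\#R_{\varsigma_j+1}+\#\ol R_{\varsigma_j}\leq l_j$ comes from \eqref{e:halfpaths} (Lemma~\ref{l:halfpaths}, the non-overlapping bound between the two adjacent paths) together with the explicit formula $l_j = 2N-i_{\varsigma_j}+\max\{i_{\varsigma_j}-i_{\varsigma_j+1},0\}+\sigma_{\varsigma_j}$, not from Lemma~\ref{l:halfpathsN} applied to a single path.
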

\begin{proof}
Let for brevity $\cal T=\cal T_{\ps}$.
We show that $\cal T$ is, in fact, a tableau. First, observe that no box of the column $j$ is filled up twice. If $l_j<N$,  it follows  from \eqref{e:halfpathsN}. If $l_j\geq N$, it follows from
\eqref{e:halfpaths} and from relation
$$l_j = 2N-i_{\varsigma_j} + \max\{i_{\varsigma_j}-i_{\varsigma_j+1},0\} + \sigma_{\varsigma_j}.$$

Moreover, $\cal T$ respects the vertical rule (V), by construction. To prove that $\cal T$ respects the horizontal rules (H), it suffices to study $\cal T(i,j)$ and $\cal T(i,j+1)$ for each $(i,j)\in \sd$. 
We split the analysis in the following cases:
\begin{enumerate}[a]
\item[(a)] $l_j< N$ and $l_{j+1}< N$,
\item[(b)] $l_j>N$ and $j\notin \scr S$,
\item[(c)] $j\in \scr S$ and $l_{j}\geq N$,
\item[(d)] $j\in \scr S$ and $l_{j+1}<N$,
\item[(e)] $l_j< N$ and $l_{j+1}\geq N$.
\end{enumerate}

Let $B_j\subseteq\{1,\ldots, N\}$ and $\ol B_j\subset\{\ol 1,\ldots, \ol N\}$ (resp. $B_{j+1}$ and $\ol B_{j+1}$) be the sets which fill up the column $j$ (resp. $j+1$) by the above procedure. In each case, we prove that

\begin{enumerate}
\item $\# B_j\cap \{1,\ldots, r\} \geq \# ^{[t_j-t_{j+1}]}B_{j+1}\cap \{1,\ldots, r\}$, $r=1,\ldots, N-1$,
\vspace{0.3 cm}
\item $\# \ol B_j^{[b_j-b_{j+1}]}\cap \{\ol 1,\ldots, \ol r\} \leq \# \ol B_{j+1}\cap \{\ol 1,\ldots, \ol r\}$, $r=1,\ldots, N-1$,
\vspace{0.3 cm}
\item $\# B_j + \# \ol B_{j+1}\geq b_{j+1}-t_j +1$.
\end{enumerate}
In particular, (i)-(iii) implies that $j$ and $j+1$ respect the horizontal rules.

Consider the case (a). By Lemma \ref{l:domtabsnk}, we have that $b_{j}-b_{j+1} = \sigma_{\varsigma_{j}} + \max\{i_{\varsigma_{j}} - i_{\varsigma_{j}+1},0\}$. Then $t_{j}-t_{j+1} = \sigma_{i_{\varsigma_j}} + \max\{i_{\varsigma_j +1}- i_{\varsigma_j},0\}$, and $b_{j+1} - t_{j} = i_{\varsigma_{j}}- \max\{i_{\varsigma_{j}} - i_{\varsigma_{j}+1},0\} -\sigma_{\varsigma_j} -1$. 
Therefore (i), (ii) and (iii) follow, respectively, by \eqref{e:Snops}, \eqref{e:olSnops} and \eqref{e:halfpaths2}.

Consider the case (b). By Lemma \ref{l:domtabsnk}, we see that $b_{j}-b_{j+1} = \sigma_{\varsigma_j} + \max\{i_{\varsigma_{j+1}} - i_{\varsigma_j}, 0\}$, 
$t_{j}- t_{j+1} = \sigma_{\varsigma_j+1} + \max\{i_{\varsigma_{j+1}} - i_{\varsigma_{j+2}}, 0\}$, and $b_{j+1}-t_j = 2N-i_{\varsigma_{j+1}} -1$. 
Therefore, (i), (ii) and (iii) follow, respectively, by \eqref{e:Rnops}, \eqref{e:olRnops} and \eqref{e:halfpathsN}.

Consider the case (c). By Lemma \ref{l:halfpaths}, we have 
\begin{equation}\label{e:caseiiit}
\# \ol R_{\varsigma_{j}}^{[\sigma_{\varsigma_j}]}\cap \{\ol 1, \ldots, \ol r\} \leq \# \ol R_{\varsigma_{j}+1}\cap \{\ol 1, \ldots, \ol r\},
\end{equation}
and
\begin{equation}\label{e:caseiiit1}
\# \ol R_{\varsigma_{j}+1}^{[\sigma_{\varsigma_j+1}]}\cap \{\ol 1, \ldots, \ol r\} \leq \# \ol R_{\varsigma_{j+1}}\cap \{\ol 1, \ldots, \ol r\},
\end{equation}
for $r=1,\ldots, N-1$. Therefore, inequalities \eqref{e:caseiiit} and \eqref{e:caseiiit1} combined imply
\begin{equation}\label{e:caseiiit2}
\# \ol R_{\varsigma_{j}}^{[\sigma_{\varsigma_j}+\sigma_{\varsigma_j+1}]}\cap \{\ol 1, \ldots, \ol r\} \leq \# \ol R_{\varsigma_{j+1}}\cap \{\ol 1, \ldots, \ol r\},\quad r=1,\ldots, N-1.
\end{equation}
Moreover, since $l_j = b_j -t_{j}+1$ and $s_j-t_j = N-1$, Lemma \ref{l:domtabsnk} implies that 
$$\sigma_{\varsigma_{j}} + \sigma_{\varsigma_{j}+1} + N - i_{\varsigma_{j}} = b_{j}-b_{j+1},$$
and then, (ii) is proved in this case. We prove (i) similarly, using that
\begin{equation}\label{e:caseiiib}
\# R_{\varsigma_{j}+1}\cap \{1, \ldots, r\} \geq \# ^{[\sigma_{\varsigma_{j}+1} + \sigma_{\varsigma_{j+1}} + N - i_{\varsigma_{j+1}+1}]}R_{\varsigma_{j+1}+1}\cap \{1, \ldots, r\}, \quad r=1,\ldots, N-1,
\end{equation}
and $\sigma_{\varsigma_{j}+1} + \sigma_{\varsigma_{j+1}} + N - i_{\varsigma_{j+1}+1} = t_{j}-t_{j+1}$.

Since $i_{\varsigma_j+1} = i_{\varsigma_{j+1}} = N$, by \eqref{e:halfpaths} it follows that $\# \ol R_{\varsigma_{j}+1} + \# R_{\varsigma_{j+1}} \geq N + \sigma_{\varsigma_{j}+1}$. Therefore (iii) follows by observing that $b_{j+1}-t_j \leq N-1$.

Consider the case (d). By Lemma \ref{l:halfpaths}, we have 
$$\#\ol R_{\varsigma_{j}+1}^{[\sigma_{\varsigma_{j}+1}]}\cap \{\ol 1, \ldots, \ol r\} \leq \#\ol R_{_{\varsigma_{j+1}}}\cap\{\ol 1, \ldots, \ol r\},\quad r=1,\ldots, N-1.$$
However, since $i_{\varsigma_{j}+1} = N$, the above inequality is equivalent to 
\begin{equation}\label{e:caseiv}
\# R_{\varsigma_{j}+1}\cap \{1,\ldots, r\} \geq \#^{[\sigma_{\varsigma_{j}+1}]}S_{\varsigma_{j+1}}\cap\{1,\ldots, r\}, \quad r=1,\ldots, N-1.
\end{equation}
Moreover, by Lemma \ref{l:domtabsnk}, we prove that $t_j-t_{j+1} = \sigma_{\varsigma_{j}+1}$, using that $s_j = t_j + N -1$, $b_{j+1}-i_{\varsigma_{j+1}} = t_{j+1} -1$. Item (i) follows in this case.

By Lemma \ref{l:halfpaths}, we also have 
\begin{equation}\label{e:caseivb}
\# \ol R_{\varsigma_{j}}^{[\sigma_{\varsigma_{j}}+N - i_{\varsigma_{j}}]}\cap \{\ol 1,\ldots, \ol r\} \leq  \# \ol R_{\varsigma_{j}+1}\cap\{\ol 1,\ldots, \ol r\},
\end{equation}
and
\begin{equation}\label{e:caseivb1}
\# R_{\varsigma_{j}+1}\cap \{1,\ldots, r\} \geq \#^{[\sigma_{\varsigma_{j}+1}+N - i_{\varsigma_{j+1}}]}R_{\varsigma_{j+1}}\cap\{1,\ldots, r\},
\end{equation}
for $r=1,\ldots, N-1$.
Since $i_{\varsigma_{j}+1}=N$, \eqref{e:caseivb1} is equivalent to
\begin{equation}\label{e:caseivb2}
\# \ol R_{\varsigma_{j}+1}^{[\sigma_{\varsigma_{j}+1}+N - i_{\varsigma_{j+1}}]}\cap \{\ol 1,\ldots, \ol r\} \leq \# \ol S_{\varsigma_{j+1}}\cap\{\ol 1,\ldots, \ol r\},\quad r=1,\ldots, N-1.
\end{equation}
Inequalities \eqref{e:caseivb} and \eqref{e:caseivb2} combined imply
\begin{equation*}
\# \ol R_{\varsigma_{j}}^{[\sigma_{\varsigma_{j}}+N - i_{\varsigma_{j}}+\sigma_{\varsigma_{j}+1}+N - i_{\varsigma_{j+1}}]}\cap \{\ol 1,\ldots, \ol r\} \leq \#\ol S_{\varsigma_{j+1}}\cap\{\ol 1,\ldots, \ol r\},\quad r=1,\ldots, N-1.
\end{equation*}
Moreover, by Lemma \ref{l:domtabsnk}, we see that
$$b_j-b_{j+1} = \sigma_{\varsigma_{j}}+N - i_{\varsigma_{j}}+\sigma_{\varsigma_{j}+1}+N - i_{\varsigma_{j+1}},$$
using that $l_j = b_j-t_j +1$ and  $s_j= t_j +N -1$, and then, (ii) holds in this case.

By \eqref{e:halfpaths}, we have
$$\# \ol R_{\varsigma_j +1} + \# R_{\varsigma_{j+1}}\leq 2N - i_{\varsigma_j +1} + (N- i_{\varsigma_{j+1}}) + \sigma_{\varsigma_{j}+1},$$
which is equivalent to
$$N -\# R_{\varsigma_j +1} + N -\#\ol S_{\varsigma_{j+1}}\leq 2N - i_{\varsigma_j +1} + (N- i_{\varsigma_{j+1}}) + \sigma_{\varsigma_{j}+1}.$$
Since $i_{\varsigma_j+1} = N$, we have 
$$\# R_{\varsigma_j +1} + \#\ol S_{\varsigma_{j+1}}\geq i_{\varsigma_{j+1}} - \sigma_{\varsigma_{j}+1}.$$
Since $b_{j+1}-t_j +1 = i_{\varsigma_{j+1}}- \sigma_{\varsigma_{j}+1}$, (iii) holds in this case

Case (e) is proved similarly to case (d).

Thus for $\ps\in \nops$, the map $\cal T =\cal T_{\ps}$, as defined above, is a tableau. Since each path $p_t$ is completely described by the pair of sets $R_t$ and $\ol R_t$, and the above description can be made backwards in order to obtain the sequence $\ps_{\cal T}$ from a tableaux $\cal T$, we have a bijection between $\nops$ and $\Tab\sd$.

It remains to show that \eqref{e:equalmon} holds. By Lemma \ref{dompath} and the definition of highest path we have $m_+\sd = \prod_{t=1}^T\m (p_{i_t,k_t}^+)$.  Since each path $\ps$ is obtained from $(p_{i_1,k_1}^+,\ldots, p_{i_T,k_T}^+)$ by applying a sequence of lowering moves $\scr A_{j_s,l_s}^{-1}, 0\leq s \leq S$, for some $S\in \Z_{\geq 0}$, it suffices to check that if $(i,k)\in \cal W$ is such that $\ps \scr A_{i,k}^{\pm 1}\in \nops$, then
$$M (\cal T_{\ps\scr A_{i,k}^{\pm 1}}) = M(\cal T_{\ps})A_{i,k}^{\pm 1}.$$
It is easily done by inspection. 

The proof of the proposition is finished.
 
\end{proof}

\begin{thm}\label{t:tabxpaths}
Let $\sd$ be a generic super skew diagram and $m_+ = m_+\sd$. Then
$$\chi_q(\L(m_+)) = \sum_{\cal T \in \Tab(\sd)} M(\cal T).$$
\label{thm3}
\end{thm}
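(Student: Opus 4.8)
The plan is to obtain this theorem as a direct consequence of three results already established: Lemma \ref{l:domtabsnk}, Theorem \ref{mainthm1}, and Proposition \ref{p:bijpathtab}. First I would invoke Lemma \ref{l:domtabsnk}: since $\sd$ is a \emph{generic} super skew diagram, the sequence $\cal X(m_+\sd)$ is a snake. In particular it is an extended snake, of some length $T\in\Z_{\geq 1}$; writing $\cal X(m_+) = (i_t,k_t)_{1\leq t\leq T}$ we then have $m_+ = \prod_{t=1}^T Y_{i_t,k_t}$, since by Lemma \ref{l:dommontbl} the monomial $m_+\sd$ lies in $\cal P_{\cal X}$ and this product is exactly the normal form defining $\cal X(m_+)$. (Genericity is what makes Lemma \ref{l:domtabsnk} applicable; the non-generic case is dealt with separately in Section \ref{non generic sec}.)

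Next I would feed this extended snake into Theorem \ref{mainthm1}, which yields
$$\chi_q(\L(m_+)) \;=\; \sum_{\ps\in\nops}\ \prod_{t=1}^T\m(p_t).$$
Finally, by Proposition \ref{p:bijpathtab} the assignment $\ps\mapsto\cal T_{\ps}$ is a bijection $\nops\to\Tab(\sd)$, and under it $\prod_{t=1}^T\m(p_t) = M(\cal T_{\ps})$ by \eqref{e:equalmon}. Re-indexing the sum on the right-hand side over $\Tab(\sd)$ through this bijection gives $\chi_q(\L(m_+)) = \sum_{\cal T\in\Tab(\sd)}M(\cal T)$, as desired.

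The genuinely hard part of this chain is not the theorem statement itself but the inputs it rests on, above all Proposition \ref{p:bijpathtab}, whose proof requires checking that $\cal T_{\ps}$ genuinely satisfies the horizontal rule (H). That amounts to a case analysis on the relative column lengths $l_j, l_{j+1}$ and on whether column $j$ is special, each case being closed by one of the inequalities of Lemma \ref{l:halfpaths} or Corollary \ref{r:halfpaths} comparing $R_p, \ol R_p, S_p, \ol S_p$ for consecutive paths, together with the explicit shift formulas for $\sigma_{\varsigma_j}$ recorded in Lemma \ref{l:domtabsnk}. Once that is in hand, the only things to be careful about for the theorem as stated are bookkeeping: that the length $T$ of the snake $\cal X(m_+\sd)$ equals the number of non-empty columns plus the number of special columns (Lemma \ref{l:dommontbl}), so that the two sides of the bijection of Proposition \ref{p:bijpathtab} match up, and that the box-to-monomial dictionary \eqref{mtab} is compatible with the path-to-monomial map $\m$ of \eqref{monfunction} — precisely the content of \eqref{e:equalmon}. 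Given all this, no further calculation is needed and the proof is a one-line assembly.
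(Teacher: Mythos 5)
Your proof is correct and coincides with the paper's: the paper likewise deduces the theorem immediately from Theorem \ref{mainthm1} and Proposition \ref{p:bijpathtab}, with Lemma \ref{l:domtabsnk} ensuring $\cal X(m_+\sd)$ is a snake so that Theorem \ref{mainthm1} applies. Your additional commentary on where the real work lies (the horizontal-rule verification in Proposition \ref{p:bijpathtab}) is accurate but not part of the proof itself.
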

\begin{proof}
Given Proposition \ref{p:bijpathtab}, this is immediate from Theorem \ref{mainthm1}.
\end{proof}

\subsection{Non-generic super skew diagrams} \label{non generic sec}
In this section we discuss non-generic skew diagram.

We call a column $j$ of diagram $\sd$ non-generic if
\begin{equation}
\# \{i\in \Z| (i,j) \in \sd, (i,j+1)\in \sd\}=2N.
\end{equation}

Let $\sd$ be a non-generic super skew diagram. Let column $j'$ be  such that it is  non-generic and 
all columns $j$ with $j>j'$ are generic. Define 
\begin{align*}
\lefteqn{(\lambda'/\mu') = \{(i,j)\in \sd | j\leq j'\}\cup }\\
&& \{(t_{j'}-r,j')| 1\leq r \leq l_{j'+1} - 2N +1\}\cup \{(i-1,j-1)| (i,j)\in \sd, j>j'+1\}.
\end{align*}
The following lemma is straightforward.
\begin{lemma}
The shape $(\lambda'/\mu')$ is a super skew diagram. The number of non-empty columns of $(\lambda'/\mu')$ is one less than that of $\sd$. The number of boxes of $(\lambda'/\mu')$ is $2N-1$ less than that of $\sd$. The number of non-generic columns of
$(\lambda'/\mu')$ is one less than that of $\sd$. \qed
\end{lemma}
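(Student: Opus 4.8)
The plan is to settle the two cardinality statements by bookkeeping and then to verify the ``super skew diagram'' and non-generic-column assertions by a short interval computation organised around the non-generic hypothesis. Write $d := l_{j'+1} - 2N + 1$. Because column $j'$ is non-generic, columns $j'$ and $j'+1$ of $\sd$ overlap in exactly $2N$ rows, so $l_{j'+1}\ge 2N$, $d\ge 1$, and column $j'+1$ is non-empty. The passage from $\sd$ to $(\lambda'/\mu')$ consists of three operations: delete column $j'+1$; translate every box of a column $>j'+1$ by the vector $(-1,-1)$; and append $d$ boxes immediately above the top box of column $j'$. Hence the number of non-empty columns drops by exactly one (only column $j'+1$, which is non-empty, disappears; $j\mapsto j-1$ is injective and kills no column), and the number of boxes changes by $d - l_{j'+1} = -(2N-1)$. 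So those two assertions follow immediately.

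For the remaining statements I would use the standard description of a skew diagram in the sense of this paper: every non-empty column $j$ is a contiguous row-interval $[t_j,b_j]$, the maps $j\mapsto t_j$ and $j\mapsto b_j$ are weakly decreasing over non-empty columns, and these inequalities become strict across any gap occupied by an empty column lying between two non-empty ones. A rigid translation preserves all of this among the columns $>j'+1$, the columns $<j'$ are untouched, and enlarging column $j'$ upward keeps it an interval with new top $t_{j'}-d$ and old bottom $b_{j'}$; so only the two junctions $\{j'-1,j'\}$ and $\{j',j'+1\}$ of $(\lambda'/\mu')$ need inspection. The key computation is that the overlap of columns $j'$ and $j'+1$ in $\sd$ is the interval $[t_{j'},b_{j'+1}]$ (using $t_{j'+1}\le t_{j'}$ and $b_{j'+1}\le b_{j'}$), so the non-generic condition $b_{j'+1}-t_{j'}+1=2N$ forces $b_{j'+1}=t_{j'}+2N-1$, whence $d = t_{j'}-t_{j'+1}+1$ and the new top of column $j'$ equals $t_{j'+1}-1$. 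From the chains $t_{j'+2}\le t_{j'+1}\le t_{j'}\le t_{j'-1}$ and $b_{j'+2}\le b_{j'+1}\le b_{j'}\le b_{j'-1}$ one then reads off that $t_\bullet$ and $b_\bullet$ stay weakly decreasing across both junctions (strictly, where an empty column intervenes), so $(\lambda'/\mu')$ is a skew diagram. Moreover one computes that the overlap in $(\lambda'/\mu')$ of column $j'-1$ with the new column $j'$ equals the overlap of columns $j'-1,j'$ in $\sd$, while the overlap of the new column $j'$ with the new column $j'+1$ has the same cardinality as the overlap of columns $j'+1,j'+2$ in $\sd$.

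From these identifications the last two claims are immediate. Since by hypothesis every column of $\sd$ after $j'$ is generic, every overlap of consecutive columns in $(\lambda'/\mu')$ from column $j'$ onward is $<2N$, while the overlaps among columns $<j'$ are exactly those of $\sd$; hence $(\lambda'/\mu')$ is super, and its non-generic columns are precisely the non-generic columns of $\sd$ contained in $\{1,\dots,j'-1\}$ — one fewer than in $\sd$, because $j'$ is non-generic while every later column of $\sd$ is generic. I expect the only delicate point to be the geometric bookkeeping at the two junctions together with the empty-column gaps; once the identity $d=t_{j'}-t_{j'+1}+1$ is in hand, everything reduces to comparing intervals.
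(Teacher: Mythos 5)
Your bookkeeping and the identity $d = t_{j'} - t_{j'+1} + 1$ (hence new top $t_{j'+1}-1$) are exactly the right key computation, and your overlap identifications --- the new $(j'-1,j')$-overlap equals the old one, and the new $(j',j'+1)$-overlap has the same size as the old $(j'+1,j'+2)$-overlap --- correctly establish both the super property and the drop by one in the number of non-generic columns. The paper offers no proof beyond declaring the lemma ``straightforward,'' so there is nothing to compare against; your argument fills in exactly what needs filling in.

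One small inaccuracy in the way you set things up: your ``standard description'' of a skew diagram says $t_j, b_j$ are weakly decreasing, ``strict across any gap.'' That is weaker than what the paper's condition (ii) actually forces when an empty column separates two non-empty ones $j_1<j_2$: one needs the intervals to have moved completely past each other, i.e.\ $b_{j_2}<t_{j_1}$, not merely $t_{j_2}<t_{j_1}$ and $b_{j_2}<b_{j_1}$ (otherwise a cell in the empty column lying in $[t_{j_1},b_{j_2}]$ would have both cones nonempty). So as stated, ``reading off the strict inequalities from the chains'' is not enough to conclude $(\lambda'/\mu')$ is a skew diagram when there are internal gaps. Fortunately your conclusion is still right: across a gap before column $j'$ the relevant inequality is $b_{j'}<t_{j_0}$, and $b_{j'}$ is untouched by the construction; across a gap after column $j'$ the relevant inequality in $(\lambda'/\mu')$ is $b_{j_1}-1 < t_{j'+1}-1$, which is equivalent to the inequality $b_{j_1}<t_{j'+1}$ that held in $\sd$. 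If you restate the gap condition correctly, the rest of your argument goes through unchanged.
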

We call the super skew diagram $(\lambda'/\mu')$ {\it closely related to} $\sd$. We call a generic super skew diagram $(\lambda'/\mu')$ {\it related} to a non-generic super skew diagram $\sd$ 
if there exists a sequence of super skew  diagrams $(\lambda_i/\mu_i)$, $1\leq i\leq S$, $S\in\Z_{\geq 1}$, such that 
 $(\lambda_1/\mu_1)=\sd$, $(\lambda_S/\mu_S)=(\lambda'/\mu')$ and $(\lambda_i/\mu_i)$ is closely related to $(\lambda_{i-1}/\mu_{i-1})$ for $2\leq i\leq S$. 
 
\begin{cor}
 Let $\sd$ be a non-generic super skew diagram. Then there exist unique generic skew diagram related to $\sd$. \qed
\end{cor}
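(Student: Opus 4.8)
The plan is to prove existence and uniqueness separately, both by induction on the number of non-generic columns of $\sd$, with the Lemma immediately preceding the corollary doing all the real work. First note that a super skew diagram is generic precisely when it has no non-generic columns, and that, given a non-generic super skew diagram, the column $j'$ singled out in the definition of the closely related diagram — the non-generic column having only generic columns to its right — is uniquely determined. Hence the operation "$\mapsto$ the diagram closely related to $\sd$" is a single-valued operation defined on exactly the non-generic super skew diagrams, and by the Lemma it returns a super skew diagram with one fewer non-generic column.

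For existence, argue by induction on the number $n\geq 1$ of non-generic columns of $\sd$. Let $(\lambda'/\mu')$ be closely related to $\sd$; by the Lemma it is a super skew diagram with $n-1$ non-generic columns. If $n=1$ then $(\lambda'/\mu')$ is generic and is related to $\sd$ via the length-two sequence $(\sd,(\lambda'/\mu'))$. If $n>1$, then by the induction hypothesis there is a generic super skew diagram $(\lambda''/\mu'')$ related to $(\lambda'/\mu')$; prepending $\sd$ to a sequence witnessing this shows that $(\lambda''/\mu'')$ is related to $\sd$ as well.

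For uniqueness, suppose $(\lambda_1/\mu_1),\dots,(\lambda_S/\mu_S)$ is any sequence of super skew diagrams with $(\lambda_1/\mu_1)=\sd$, with $(\lambda_S/\mu_S)$ generic, and with $(\lambda_i/\mu_i)$ closely related to $(\lambda_{i-1}/\mu_{i-1})$ for $2\leq i\leq S$. Since "closely related to $Y$" is only defined when $Y$ is non-generic, every term $(\lambda_i/\mu_i)$ with $i<S$ is non-generic, and the sequence must stop at the first generic term that occurs. Because the closely related diagram is a deterministic function of its (non-generic) argument, the whole sequence is forced: $(\lambda_1/\mu_1)=\sd$ determines $(\lambda_2/\mu_2)$, which determines $(\lambda_3/\mu_3)$, and so on. By the Lemma the number of non-generic columns drops by exactly one at each step, so the first generic term is reached after exactly $n$ steps, where $n$ is the number of non-generic columns of $\sd$; thus $S=n+1$ and the sequence, hence its last term, is the same for every witnessing sequence. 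Consequently the generic super skew diagram related to $\sd$ is unique, and it coincides with the one produced in the existence argument.

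There is no genuine obstacle here; the single point deserving care is the bookkeeping that "closely related" is a well-defined single-valued operation on non-generic super skew diagrams, so that the chain $(\lambda_1/\mu_1),(\lambda_2/\mu_2),\dots$ is uniquely determined and the induction on the number of non-generic columns terminates. Both facts are immediate once one observes that the column $j'$ in the construction is uniquely specified.
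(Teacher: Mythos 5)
Your proof is correct. The paper marks this corollary with $\qed$ and gives no explicit argument, treating it as immediate from the preceding lemma; your proof simply spells out that implicit argument — the operation ``closely related'' is deterministic and strictly decreases the number of non-generic columns, forcing the chain and hence the terminal generic diagram to be unique — so it is the same approach, just made explicit.
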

Finally, we observe that the related super skew diagrams correspond to the same module over the affine quantum algebra.

\begin{prop}
Let super skew diagram $(\lambda'/\mu')$ be related to super skew diagram $\sd$. Then there exist a bijection $\tau: {\operatorname{Tab}}\sd \to {\operatorname {Tab}}(\lambda'/\mu')$ such that $M(\cal T)=M(\tau\cal T)$ for all $\cal T\in {\operatorname{Tab}}\sd$.
\end{prop}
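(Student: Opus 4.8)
The plan is to reduce to a single surgery step: it suffices to produce a monomial-preserving bijection $\Tab X\to\Tab Y$ whenever $Y$ is \emph{closely} related to $X$, for then, along the chain $(\lambda_i/\mu_i)$, $1\le i\le S$, joining $\sd$ to its generic related diagram $(\lambda'/\mu')$, one composes the bijections $\tau_i\colon\Tab(\lambda_{i-1}/\mu_{i-1})\to\Tab(\lambda_i/\mu_i)$ and the identities $M(\cal T)=M(\tau_i\cal T)$ to obtain $\tau$. So consider a single step, with $(\lambda'/\mu')$ closely related to $\sd$ through the column $j'$: the columns $1,\dots,j'-1$ are untouched, column $j'$ acquires $l_{j'+1}-2N+1$ new boxes, column $j'+1$ is deleted, and each box $(i,j)$ with $j>j'+1$ is moved to $(i-1,j-1)$.

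I would build $\tau$ columnwise. On columns $1,\dots,j'-1$ set $\tau\cal T=\cal T$; on the boxes with $j>j'+1$ set $(\tau\cal T)(i-1,j-1):=\cal T(i,j)$. The diagonal shift is harmless for both the tableau rules and the monomial, since $4\bigl((j-1)-(i-1)\bigr)=4(j-i)$, so by \eqref{mtab} every shifted box contributes to $M(\tau\cal T)$ the very factor it contributed to $M(\cal T)$. Thus everything localizes to the two columns $j',j'+1$ of $\sd$, which by construction share exactly $2N$ rows, versus the single extended column $j'$ of $(\lambda'/\mu')$, of length $l_{j'}+l_{j'+1}-2N+1$. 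Here one needs a \emph{merging rule}. Because $\Alp$ has exactly $2N+1$ letters, the rules (H) and (V) leave only a very restricted family of letter patterns possible in the $2N$ shared rows of column $j'$, and, given such a pattern, the shared part of column $j'+1$ is essentially forced, the two being ``complementary''. Using this rigidity, the data of columns $j'$ and $j'+1$ of $\cal T$ repackages uniquely into one admissible column of the required length, which I take to be $\tau\cal T$ on column $j'$; conversely that column splits back. One then checks, by running through the finitely many shared-row patterns, that $\tau\cal T$ satisfies (V) within column $j'$ and (H) against its two new neighbours, so that $\tau\cal T\in\Tab(\lambda'/\mu')$, and that the splitting map is a two-sided inverse, so $\tau$ is a bijection.

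It remains to verify $M(\cal T)=M(\tau\cal T)$, which by the above reduces to equating the product of the $\m$-contributions of columns $j',j'+1$ of $\sd$ with that of the extended column $j'$ of $(\lambda'/\mu')$. This is a finite telescoping computation with the explicit map $\m\colon\Alp\times\Z\to\Z[Y_{i,k}^{\pm1}]$: adjacent boxes within a column, and boxes facing each other across the shared rows, produce $Y$-factors that cancel in pairs, and the merging rule is arranged precisely so that the surviving factors match; I would organize the check by the letters occurring in the shared rows. I expect this bundle of local verifications to be the main obstacle — writing down a merging rule that is simultaneously a bijection, compatible with (H) and (V), and monomial-preserving, and dispatching the cases — and the reason all three demands can be met at once is the numerical coincidence that the $2N$-row overlap fuses into a block of $2N+1=\#\Alp$ positions, so the merged column uses the full alphabet with its run of $0$'s absorbing exactly the available slack.
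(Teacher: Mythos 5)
Your outline (reduce to a single closely-related step, treat columns $j<j'$ by the identity and columns $j>j'+1$ by the diagonal shift) matches the paper exactly, and the remark that the shift $4\bigl((j-1)-(i-1)\bigr)=4(j-i)$ keeps the monomial contribution of every shifted box unchanged is correct. The gap is precisely where you flag it: you never actually write down the merging rule for columns $j'$ and $j'+1$, and the proposition has essentially no content without it. Worse, the heuristic you offer in its place (``the shared part of column $j'+1$ is essentially forced'' by column $j'$, the two columns being ``complementary'' across the $2N$ shared rows) is not what happens. Already for $N=1$ with two fully-shared $2$-boxes columns one finds two admissible fillings of column $j'$ and, for each of them, two admissible fillings of column $j'+1$; what is rigid is not one column in terms of the other but the \emph{non-contributing} ends: the top $l_{j'}-N$ entries of column $j'$ and the bottom $l_{j'+1}-N$ entries of column $j'+1$ turn out to be determined. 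Without the precise statement of that rigidity, neither the bijectivity nor the monomial identity can be checked, and you correctly predict this is ``the main obstacle'' — but leave it unresolved.

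The paper's merge is much simpler than what you seem to be reaching for. The new column $j'$ of $(\lambda'/\mu')$ (of length $l_{j'}+l_{j'+1}-2N+1$) is filled as follows: its bottom $N$ entries are the bottom $N$ entries of old column $j'$, its top $N$ entries are the top $N$ entries of old column $j'+1$ (both read off directly from $\cal T$ at the appropriate indices), and \emph{all} $l_{j'}+l_{j'+1}-4N+1\ge 1$ intermediate boxes are filled with $0$. Explicitly,
\begin{equation*}
(\tau\cal T)(i,j)=\begin{cases}
\cal T(i,j), & j<j',\ \text{or } j=j' \text{ and } i>b_{j'}-N,\\
\cal T(i+1,j+1), & j>j',\ \text{or } j=j' \text{ and } i<t_{j'}+N,\\
0, & j=j' \text{ and } t_{j'}+N\le i\le b_{j'}-N,
\end{cases}
\end{equation*}
with $t_{j'},b_{j'}$ the top and bottom rows of column $j'$ in $(\lambda'/\mu')$. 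No delicate case analysis of ``shared-row patterns'' is needed to define the map; the cancellation you anticipate in the monomial computation comes entirely from the middle run of $0$'s, and invertibility follows from the rigidity of the discarded ends. To repair your argument you would need to produce this (or an equivalent) explicit formula and then do the routine local verifications; as written, the proposal identifies the problem but does not solve it.
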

\begin{proof}
It is sufficient to give such a bijection for closely related super skew diagrams.
Let $(\lambda'/\mu')$ be related to a non-generic super skew diagram $\sd$ and let $\cal T\in \operatorname{Tab}\sd$.
We define $\tau\cal T$ as follows:
$$(\tau\cal T)(i,j)=\begin{cases} 
\cal T(i,j), \hspace{70pt} &  j<j' \ {\rm or}\  j=j',\ i>b_j'-N, \\
\cal T(i+1,j+1), \hspace{40pt}& j>j' \ {\rm or}\ j=j',\ i<t_{j'}+N, \\
0,  \hspace{90pt} &  j=j'\ {\rm and}\  t_{j'}+N\leq i\leq b_{j'}-N,
\end{cases}$$
where $t_{j'}$ and $b_{j'}$ are, respectively, the top and the bottom box of the column $j'$ in $(\lambda'/\mu')$.
 
All the checks are straightforward.
\end{proof}


\begin{thebibliography}{00000}


\bibitem[B94]{B94}
J. Beck, {\it Braid group action and quantum affine algebras}, Commun. Math. Phys. {\bf 165} (1994), 555-568.

\bibitem[BR83]{BR83}
A. Berele and A. Regev, {\it Hook Young diagrams, combinatorics and representations of Lie superalgebras}, Bull. Amer. Math. Soc. (N.S.) {\bf 8} (1983), no. 2, 337-339.

\bibitem[BCP99]{BCP99}J. Beck, V. Chari, and A. Pressley, {\it An algebraic characterization of the affine canonical basis}, Duke Math. J. {\bf 99}
(1999), no. 3, 455-487.

\bibitem[CP94a]{CP94a}
V.Chari and A. Presley, {\it A guide to quantum groups}, Cambridge, UK: Univ. Pr., 1994

\bibitem[CP94b]{CP94b}
V.Chari and A. Presley, {\it Quantum affine algebras and their representations}, CMS Conf. Proc. {\bf 16} (1994), 59-78.

\bibitem[C95]{C95}
V.~Chari, {\it Minimal affinizations of representations of quantum groups: the rank-2 case}, Publ. Res. Inst. Math. Sci. {\bf 31}
(1995), 873--911.

\bibitem[CP95]{CP95}
V. Chari and A. Pressley, {\em Minimal affinizations of representations of quantum
groups: the nonsimply laced case}, Lett. Math. Phys. {\bf 35}
(1995), 99--114.

\bibitem[CP96a]{CP96a} 
V. Chari and A. Pressley, {\it Minimal affinizations of representations of quantum groups: the simply laced case}, J.
Algebra {\bf 184} (1996), no. 1, 1-30.

\bibitem[CP96b]{CP96b}
V. Chari and A. Pressley, {\em Minimal affinizations of representations of quantum
groups: the irregular case}, Lett. Math. Phys. {\bf 36} (1996),
247--266.

\bibitem[Da98]{Da98}
I. Damiani, {\it La R-matrice pour les algebres quantiques de type affine non tordu}, Ann. Sci. Ecole
Norm. Sup. (4) {\bf 31} (1998), no. 4, 493-523. 

\bibitem[D88]{Dri88}
V. G. Drinfeld, {\it A new realization of Yangians and quantized affine algebras}, Sov. Math. Dokl. {\bf 36} (1988), 212-216.

\bibitem[FM01]{FM01}
E. Frenkel and E. Mukhin, {\it Combinatorics of $q$-character of finite dimensional representations of quantum affine algebras}, Commun. Math. Phys. {\bf 216} (2001), 23-57.

\bibitem[FM02]{FM02}
E. Frenkel and E. Mukhin, {\it The Hopf algebra $\Rep U_q(\hlie {gl}_{\infty})$}. Selecta Math. (N.S.) {\bf 8} (2002), no. 4, 537-635.

\bibitem[FR98]{FR98}
E. Frenkel and N. Reshetikhin, {\it The $q$-characters of representations of quantum affine algebras and deformations of $W$-algebras}, Contem. Math. {\bf 248} (1998), 163-205.

\bibitem[H07]{H07}
D. Hernandez, {\it On minimal affinizations of representations of quantum groups}, Comm. Math. Phys. {\bf 276} (2007), no. 1, 221-259. 

\bibitem[Kn95]{Kn95}
H. Knight, {\it Spectra of tensor products of finite-dimensional representations of Yangians}, J. Algebra
{\bf 174} (1995) 187-196.

\bibitem[KOS95]{KOS95}
A. Kuniba, Y. Ohta and J. Suzuki, {\it Quantum Jacobi-Trudi and Giambelli formulae for $U_q(B_r^{(1)}
)$ from the analytic Bethe ansatz}, J. Phys. A: Math. Gen. {\bf 28} (21) (1995) 6211.

\bibitem[LM13]{LM13}
J.R. Li and E. Mukhin, {\it Extended T-system of type $G_2$}, SIGMA 9 (2013), Paper 054, 28 pp. 

\bibitem[MTZ04]{MTZ04}
A. Molev, V. Tolstoy and R. Zhang, {\it On irreducibility of tensor products of evaluation modules for the quantum affine algebra}, J. Phys. A {\bf 37} (2004), no. 6, 2385-2399. 

\bibitem[M10]{M10}
A. Moura, {\em Restricted limits of minimal affinizations}, Pacific J. Math. {\bf 244} (2010), 359--397.

\bibitem[MY12]{MY12a}
E. Mukhin and C. A. S. Young, {\it Path descriptions of type $B$ $q$-characters}, Adv. Math. {\bf 231} (2012), no. 2, 1119-1150.  

\bibitem[NN06]{NN06}
W. Nakai and T. Nakanishi, {\it Paths, tableaux and q-characters of quantum affine algebras: the $C_n$ case}, J. Phys. A: Math.
Gen. {\bf 39} (9) (2006) 2083-2115. 

\bibitem[NN07a]{NN07a} 
W. Nakai and T. Nakanishi, {\it Paths and tableaux descriptions of Jacobi-Trudi determinant associated with quantum
affine algebra of type $C_n$}, SIGMA {\bf 3} (2007) 078-098.

\bibitem[NN07b]{NN07b}
W. Nakai and T. Nakanishi, {\it Paths and tableaux descriptions of Jacobi-Trudi determinant associated with quantum
affine algebra of type $D_n$}, J. Algebraic Combin. {\bf 26} (2) (2007) 253-290.

\bibitem[N13]{N13}
K. Naoi, {\em Demazure modules and graded limits of minimal
affinizations}, Rep. Theory {\bf 17} (2013), 524--556.

\bibitem[N14]{N14}
K. Naoi, {\em Graded limits of minimal affinizations in type D}, SIGMA {\bf 10} (2014), 047, 20 pages.

\bibitem[NT98]{NT98}
M. Nazarov and V. Tarasov, {\it Representations of Yangians with Gelfand-Zetlin bases}, J. Reine Angew. Math. {\bf 496} (1998), 181-212.


\end{thebibliography}
\end{document}